\documentclass[a4paper,11pt]{article}
\usepackage{graphicx}
\usepackage{amssymb}
\usepackage{amsthm}
\usepackage{amsfonts}
\usepackage{amsmath}
\usepackage{hyperref}

\usepackage{xcolor}
\title{Constructing orientable and negative orientable sequences with asymptotically optimal period}
\author{Chris J. Mitchell and Peter R. Wild}
\date{14th August 2026}
\theoremstyle{plain}
\newtheorem{lemma}{Lemma}[section]
\newtheorem{theorem}[lemma]{Theorem}
\newtheorem{corollary}[lemma]{Corollary}

\theoremstyle{definition}
\newtheorem{definition}{Definition}[section]

\newtheorem{example}{Example}[section]

\theoremstyle{remark}
\newtheorem{remark}{Remark}[section]

\begin{document}

\maketitle

\begin{abstract}
Orientable sequences, periodic sequences in which any $n$-tuple appears at most once in either
direction, were introduced in the early 1990s for use in certain position location applications;
constructions and upper bounds on the period for the binary case were published by Dai et al. More
recent work has focussed on $k$-ary sequences for arbitrary $k>2$; one method of construction
involves negative orientable sequences, in which an $n$-tuple appears at most once in either the
sequence or the negative of its reverse.  In this paper we show how additional $n$-tuples can be
added to one previously described approach for generating negative orientable sequences, resulting
in new sequences with asymptotically optimal period.  These sequences can in turn be used to
generate orientable sequences, again with asymptotically optimal period.

\end{abstract}

\section{Introduction} \label{section:introduction}

\subsection{Background}  \label{subsection:background}

Orientable sequences were introduced in the early 1990s \cite{Burns92,Burns93,Dai93} in the context
of a position sensing application.  An orientable sequence of order $n$ is an infinite periodic
sequence with symbols drawn from a finite alphabet --- typically $\mathbb{Z}_k$ for some $k$ ---
with the property that any particular subsequence of length $n$, referred to throughout as an
$n$-tuple, occurs at most once in a period \emph{in either direction}.  That is, if anyone reading
the sequence observes $n$ consecutive symbols, they can deduce both the direction in which they are
reading and their position within one period of the sequence. Gabri\'{c} and Sawada \cite{Gabric25}
provide an interesting discussion of further possible applications as well as their relationship to
strings relevant to DNA computing. Note that for the purposes of this article the
\emph{period} of a sequence $(a_i)$ is the least positive integer $n$ such that $a_{i+n}=a_i$ for
every $i$.

The early work referred to above focussed on the binary case, i.e.\ where $k=2$; Dai et al.\
\cite{Dai93} provided both an upper bound on the period for this case and a method of construction
yielding sequences with asymptotically optimal periods. More recently, Mitchell and Wild
\cite{Mitchell22} showed how the Lempel homomorphism \cite{Lempel70} could be applied to
recursively generate binary orientable sequences with periods a large fraction of the optimal
value.  In 2024, Gabri\'{c} and Sawada \cite{Gabric24,Gabric24b} described a highly efficient
method of generating binary orientable sequences with the largest known periods.

In 2024, Alhakim et al.\ \cite{Alhakim24a} studied the general alphabet case, i.e.\ where $k>2$.
They gave an upper bound on the period of orientable sequences for all $n$ and $k$, and also
described a range of methods of construction using the Alhakim and Akinwande generalisation of the
Lempel homomorphism to arbitrary finite alphabets \cite{Alhakim11}.  Since then a range of
construction methods have been proposed \cite{Gabric25,Mitchell26,Mitchell25a}; of particular
interest is the method of Gabri\'{c} and Sawada \cite{Gabric25}, who showed how to construct
sequences with asymptotically optimal period for any $n$ and $k>2$ using a cycle-joining approach.
The construction method described in this paper is thus not the first to yield sequences with
asymptotically optimal period, but for the values of $n$ and $k$ we investigated the periods of the
generated sequences are greater in every case.

\subsection{Motivation and structure}  \label{subsection:motivation}

The notion of the \emph{pseudoweight} of a $k$-ary $n$-tuple is central to this paper. The
pseudoweight of an $n$-tuple is simply the integer sum of the elements in the tuple, added to $k/2$
times the number of zeros, i.e.\ the weight of the tuple if every 0 is replaced by $k/2$. The
pseudoweight of any $n$-tuple clearly lies between $n$ and $n(k-1)$.

A second concept fundamental here is the set $E_k(n-1)$ of $k$-ary $n$-tuples with pseudoweight
less than the `middle' value $nk/2$.  There are $\frac{k^n-r_{k,n,kn/2}}{2}$ such tuples, where
$r_{k,n,s}$ is the number of $k$-ary $n$-tuples with pseudoweight $s$. In an earlier paper
\cite{Mitchell26} we showed how to construct a family of \emph{negative orientable sequences},
where a negative orientable sequence is one in which an $n$-tuple occurs only once in the period of
a sequence or the negative of its reverse, where the negative is computed modulo $k$.  The period
of such a sequence is $|E_k(n-1)|=\frac{k^n-r_{k,n,kn/2}}{2}$, which approaches the maximum
possible; this sequence includes precisely the tuples in $E_k(n-1)$. This in turn enables us to
construct orientable sequences with large period. The key properties making the construction
possible are that: (a) $E_k(n-1)$ possesses the antinegasymmetry property, i.e.\ the reverse of the
negative of any $n$-tuple in $E_k(n-1)$ is not in $E_k(n-1)$ --- this is simple to see since the
negative of a $n$-tuple with pseudoweight $s$ has pseudoweight $nk/2-s$, and (b) $E_k(n-1)$ is
Eulerian i.e.\ it is the set of edges of an Eulerian subgraph of the de Bruijn digraph $B_k(n-1)$
(defined in Subsection~\ref{subsection:notation} below).

The goal of this paper is to consider how we might add certain $n$-tuples of pseudoweight $kn/2$ to
$E_k(n-1)$ whilst preserving its antinegasymmetry and its Eulerian property.  This enables us to
construct negative orientable sequences of period greater than
$\frac{k^n-r_{k,n,kn/2}}{2}$, and indeed of asymptotically optimal period.

The remainder of the paper is organised as follows.  Following this introductory section, in
Section~\ref{section:circuit_partition} an approach to dividing all the $n$-tuples of pseudoweight
exactly $nk/2$ into circuits is described; this partition is of fundamental importance to the rest
of the paper. Section~\ref{section:adding_tuples} then describes how the $n$-tuples in certain of
these circuits can be added to the set $E_k(n-1)$ of $k$-ary $n$-tuples with pseudoweight less than
$nk/2$ without losing the key properties enabling the construction of a negative orientable
sequence. This leads to Section~\ref{section:circuit_enumeration}, in which a lower bound is
developed on the number of $n$-tuples that can be added to $E_k(n-1)$. The final main part of the
paper, Section~\ref{section:OSs}, shows how new orientable and negative orientable sequences can be
constructed from the enlarged version of $E_k(n-1)$, and lower bounds on the sequence lengths are
obtained; it is also shown that the sequences have asymptotically optimal period. The paper
concludes in Section~\ref{section:conclusions} with a brief discussion of possible future work.

\subsection{Notation and basic results}  \label{subsection:notation}

We next formalise some of the key concepts we introduced in the introductory text. In this paper we
consider periodic sequences $(s_i)$ with elements from $\mathbb{Z}_k$ for some $k$, which we refer
to as $k$-ary. Since we are interested in tuples occurring either forwards or backwards in a
sequence, we also introduce the notion of a reversed tuple, so that if $\mathbf{u} =
(u_0,u_1,\ldots,u_{n-1})$ is a $k$-ary $n$-tuple then $\mathbf{u}^R = (u_{n-1},u_{n-2},
\ldots,u_0)$ is its \emph{reverse}.  We are also interested in negating all the elements of a
tuple, and hence if $\mathbf{u} = (u_0,u_1,\ldots,u_{n-1})$ is a $k$-ary $n$-tuple, we write
$-\mathbf{u}$ for $(-u_0,-u_1,\ldots,-u_{n-1})$. We write $\mathbf{s}_n(i)$ for the tuple
$(s_i,s_{i+1},\ldots,s_{i+n-1})$.

\begin{definition}[\cite{Alhakim24a}]
A $k$-ary \emph{$n$-window sequence $S = (s_i)$} (also known as a \emph{cut-down de Bruijn
sequence} --- see, for example, \cite{cameron25}) is a periodic sequence of elements from
$\mathbb{Z}_k$ ($k>1$, $n>1$) with the property that no $n$-tuple appears more than once in a
period of the sequence, i.e.\ with the property that if $\mathbf{s}_n(i) = \mathbf{s}_n(j)$ for
some $i,j$, then $i \equiv j \pmod m$ where $m$ is the period of the sequence.
\end{definition}

This paper is concerned with two special classes of $n$-window sequences: orientable and negative
orientable sequences.

\begin{definition}[\cite{Alhakim24a}] A $k$-ary $n$-window sequence $S = (s_i)$ is
said to be an \emph{orientable sequence of order $n$}, an $\mathcal{OS}_k(n)$, if $\mathbf{s}_n(i)
\neq \mathbf{s}_n(j)^R$, for any $i,j$.
\end{definition}

\begin{definition}[\cite{Alhakim24a}]  \label{definition:NOS}
A $k$-ary $n$-window sequence $S=(s_i)$ is said to be a \emph{negative orientable sequence of order
$n$}, a $\mathcal{NOS}_k(n)$, if $\mathbf{s}_n(i)\not=-{\mathbf{s}_n(j)}^R$, for any $i,j$.
\end{definition}

As described above, the notion of the pseudoweight of a $k$-ary $n$-tuple, as defined
in \cite{Mitchell25a}, is central to this note --- we write $w^*$ for the pseudoweight function.
Using the notation of \cite{Mitchell25a}, we also write $r_{k,n,s}$ for the number of $k$-ary
$n$-tuples with pseudoweight exactly $s$, where $n\leq s\leq n(k-1)$.

Let $B_k(n-1)$ be the de Bruijn digraph with vertices labeled with $k$-ary $(n-1)$-tuples and edges
labeled with $k$-ary $n$-tuples. Following \cite{Mitchell26}, let $E_k(n-1)$ be the set of edges in
$B_k(n-1)$ with pseudoweight less than $kn/2$.  Then, from \cite[Theorem 5.3]{Mitchell26}, the
subgraph of $B_k(n-1)$ with edge set $E_k(n-1)$ and vertices those which have an outgoing or
incoming edge in $E_k(n-1)$ contains $\frac{k^n-r_{k,n,kn/2}}{2}$ edges and every vertex has
in-degree equal to its out-degree; moreover, by the argument in \cite[Section 3.3]{Mitchell25a}
this subgraph is connected, and hence it is Eulerian.

A $k$-ary $n$-tuple $\mathbf{a}$ is said to be negasymmetric if $\mathbf{a}=-\mathbf{a}^R$. We
extend this terminology to $B_k(n-1)$ and refer to an edge or vertex being negasymmetric if its
associated $n$-tuple or $(n-1)$-tuple respectively is negasymmetric. A negasymmetric edge must have
pseudoweight $kn/2$. Such an edge cannot belong to an $\mathcal{NOS}_k(n)$. We are interested in
those edges of pseudoweight $kn/2$ which are not negasymmetric and the possibility that a
proportion of them might be joined to $E_k(n-1)$ that preserves the Eulerian property and hence can
be used to yield an $\mathcal{NOS}_k(n)$ of larger period.  The following elementary results will
be of use later.

\begin{lemma}  \label{lemma:number_of_nega_tuples}
Suppose $k\geq 3$ and $n\geq3$.  The number of negasymmetric $n$-tuples is
\[
\begin{split}
2k^{\lfloor n/2\rfloor} & \text{~~~~if $k$ is even and $n$ is odd, and} \\
k^{\lfloor n/2\rfloor} & \text{~~~~otherwise.}
\end{split}
\]
\end{lemma}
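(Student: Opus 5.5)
The plan is to count negasymmetric tuples directly from the defining condition. Write $\mathbf{a}=(a_0,a_1,\ldots,a_{n-1})$. The condition $\mathbf{a}=-\mathbf{a}^R$ says exactly that $a_i\equiv -a_{n-1-i} \pmod k$ for every $i$ with $0\leq i\leq n-1$. We split the positions into orbits under the involution $i\mapsto n-1-i$ and count the admissible assignments on each orbit independently; the total is then a product over orbits.

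\emph{Paired positions.} For $i<n-1-i$, i.e.\ $0\leq i\leq \lfloor n/2\rfloor -1$, there are $\lfloor n/2\rfloor$ pairs $\{i,n-1-i\}$. On each pair the value $a_i\in\mathbb{Z}_k$ is free, and $a_{n-1-i}=-a_i$ is then forced. The two conditions (for $i$ and for $n-1-i$) coincide, so each pair contributes exactly a factor $k$, giving $k^{\lfloor n/2\rfloor}$ in total.

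\emph{The middle position.} If $n$ is even there is no fixed point of the involution, so the count is $k^{n/2}=k^{\lfloor n/2\rfloor}$. If $n$ is odd, the middle position $m=(n-1)/2$ is fixed and must satisfy $a_m\equiv -a_m$, i.e.\ $2a_m\equiv 0\pmod k$. For odd $k$, $2$ is invertible modulo $k$, so the only solution is $a_m=0$ and the count is $k^{\lfloor n/2\rfloor}$. For even $k$, the solutions are exactly $a_m\in\{0,k/2\}$, so the count is $2k^{\lfloor n/2\rfloor}$.

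Combining the three cases gives the stated formula. The hypotheses $k\geq3$ and $n\geq3$ are not actually needed by this argument, and the only step requiring any care is solving $2x\equiv 0 \pmod k$ in the middle-position case. The count is a simple product of independent choices, so there is no real obstacle.
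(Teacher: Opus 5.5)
Your proposal is correct and follows essentially the same argument as the paper: the first $\lfloor n/2\rfloor$ entries freely determine the last $\lfloor n/2\rfloor$, and for odd $n$ the middle entry must satisfy $2x\equiv 0 \pmod k$. That forces $x=0$ for odd $k$ and allows $x\in\{0,k/2\}$ for even $k$, which you spell out slightly more explicitly than the paper does.
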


\begin{proof}
The first $\lfloor n/2\rfloor$ elements of a negasymmetric $n$-tuple determine the last $\lfloor
n/2\rfloor$ elements.  Moreover, if $n$ is odd then the middle element of the tuple must be $0$ or
$k/2$. The result follows.
\end{proof}

\begin{lemma}  \label{lemma:nega_moved_by_halfn_is_nega}
Suppose $(a_0,a_1,\dots,a_{n-1})$ is a negasymmetric $n$-tuple.  Then the following $n$-tuples are
also negasymmetric:
\[
\begin{split}
(a_{n/2},a_{n/2+1},\dots,a_{n-1},a_0,a_1,\dots,a_{n/2-1}) & \text{~~~~if $n$ is even, and} \\
(a_{(n+1)/2},a_{(n+1)/2+1},\dots,a_{n-1},x,a_0,a_1,\dots,a_{(n-3)/2}) & \text{~~~~if $n$ is odd.}
\end{split}
\]
where $x$ is either 0 or $k/2$.
\end{lemma}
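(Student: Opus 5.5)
We unpack the definition. Writing $\mathbf{a}=(a_0,\dots,a_{n-1})$, the condition $\mathbf{a}=-\mathbf{a}^R$ says exactly that $a_i=-a_{n-1-i}$ for all $0\le i\le n-1$ (indices mod nothing; they all lie in $\{0,\dots,n-1\}$). For odd $n$ this forces the middle entry $a_{(n-1)/2}$ to satisfy $2a_{(n-1)/2}=0$, so it is $0$ or $k/2$ (the latter only if $k$ is even). We verify the same pairing condition for each proposed tuple, treating the two parities of $n$ separately.

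\emph{Case $n$ even.} Let $\mathbf{b}=(b_0,\dots,b_{n-1})$ with $b_j=a_{j+n/2}$ for $0\le j<n/2$ and $b_j=a_{j-n/2}$ for $n/2\le j<n$; equivalently $b_j=a_{j+n/2 \bmod n}$. We need $b_j=-b_{n-1-j}$. Take $0\le j<n/2$; then $n-1-j\ge n/2$, so $b_{n-1-j}=a_{n/2-1-j}$, while $b_j=a_{n/2+j}$. Since $(n/2+j)+(n/2-1-j)=n-1$, negasymmetry of $\mathbf{a}$ gives $a_{n/2+j}=-a_{n/2-1-j}$, as required. The case $j\ge n/2$ follows by symmetry (or by applying the same identity with $j$ replaced by $n-1-j$).

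\emph{Case $n$ odd.} Write $m=(n-1)/2$, so the tuple is $\mathbf{b}$ with $b_j=a_{m+1+j}$ for $0\le j\le m-1$, $b_m=x$, and $b_{m+1+i}=a_i$ for $0\le i\le m-1$. The middle entry $b_m=x\in\{0,k/2\}$ satisfies $x=-x$, which is exactly the middle condition. For $0\le j\le m-1$, the partner index is $n-1-j=2m-j=m+1+(m-1-j)$, so $b_{n-1-j}=a_{m-1-j}$. Since $(m+1+j)+(m-1-j)=2m=n-1$, negasymmetry of $\mathbf{a}$ gives $a_{m+1+j}=-a_{m-1-j}$, i.e.\ $b_j=-b_{n-1-j}$. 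Hence $\mathbf{b}$ is negasymmetric. Note that the original middle entry $a_m$ is simply dropped and replaced by $x$, so no condition on $a_m$ is used beyond the above.

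The only point requiring care is the index bookkeeping in the odd case (checking that positions $j$ and $n-1-j$ in the new tuple come from positions summing to $n-1$ in $\mathbf{a}$); there is no real obstacle. One might remark that the value $x=k/2$ is only meaningful when $k$ is even, consistent with Lemma~\ref{lemma:number_of_nega_tuples}.
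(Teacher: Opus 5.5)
Your proof is correct and takes the same approach as the paper, which says only that the result follows immediately from the definition of negasymmetry. You write out the index check $a_i=-a_{n-1-i}$ explicitly in both parity cases, including the condition $x=-x$ for the inserted middle entry when $n$ is odd.
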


\begin{proof}
The result follows immediately from the definition of negasymmetric.
\end{proof}

Again following \cite{Mitchell26}, a subgraph of $B_k(n-1)$ is said to be antinegasymmetric if
$\mathbf{a}\neq-\mathbf{b}^R$ for every pair of edges, $(\mathbf{a},\mathbf{b})$, in this subgraph.
The subgraph of $B_k(n-1)$ with edge set $E_k(n-1)$ is clearly antinegasymmetric, and since it is
also Eulerian there exists a Eulerian circuit in this subgraph corresponding to a sequence $S$
which is an $\mathcal{NOS}_k(n)$ of period $\frac{k^n-r_{k,n,kn/2}}{2}$ (see \cite[Lemma
3.15]{Mitchell25a}). The reverse of $-S$ covers the edges of pseudoweight greater than $kn/2$ (as,
of course, does $-S$).

Let $H_k(n-1)$ be the subgraph of $B_k(n-1)$ restricted to edges of pseudoweight precisely $kn/2$.
The nature of $H_k(n-1)$ will be quite different depending on whether
$n$ is odd or even and hence we consider the cases
separately. Observe that, by definition, there
are $r_{k,n,kn/2}$ edges in $H_k(n-1)$.

\section{Partitioning \texorpdfstring{$H_k(n-1)$}{Hk(n-1)} into necklaces}  \label{section:circuit_partition}

We first need the following concept that is key to the ideas in this paper.

\begin{definition}[Necklace --- see, for example, \cite{Ruskey92}]
A $k$-ary \emph{necklace} is a cycle of period dividing $n$ within the de Bruijn graph $B_k(n-1)$
that includes all edges ($n$-tuples) that are cyclic shifts of one another.  If
$(a_0,a_1,\dots,a_{n-1})$ is a $k$-ary $n$-tuple, i.e.\ an edge in $B_k(n-1)$, then let $m$ be the
smallest positive integer $c$ such that $a_i=a_{\overline{i+c}}$ for every $i$ ($0\leq i<n$), where
$\overline{x}=x\bmod n$. We write $[a_0,a_1,\dots,a_{n-1}]$ for the necklace in $B_k(n-1)$
consisting of edges
\[ (a_0,a_1,\dots,a_{n-1}), (a_1,a_2\dots,a_{n-1},a_0), \dots,
(a_{m-1},\dots,a_{n-1},a_0,a_1,\dots,a_{m-2}), \] and say the necklace has period $m$.
\end{definition}

We start by showing how the edges of $H_k(n-1)$ can readily be divided into edge-disjoint
necklaces.

\subsection{Defining the partition}  \label{subsection:defining_Ckn-1}

We first establish the following.

\begin{lemma}  \label{lemma:k_odd_vertex_degrees}  \label{lemma:Hkn-1_vertex_degrees}
Suppose $k\geq3$ and $n\geq 3$. Suppose $\mathbf{a}=(a_1,a_2\dots,a_{n-1})$ is a vertex of
$H_k(n-1)$. Then:
\begin{itemize}
\item if $k$ is odd then $\mathbf{a}$ has in-degree and out-degree:
\begin{align*}
1 & \text{~~if~~} w^*(\mathbf{a})\in\{kn/2-i:1\leq i\leq k-1\} \text{~or~} w^*(\mathbf{a})=k(n-1)/2 \\
0 & \text{~~otherwise}.
\end{align*}
\item if $k$ is even then $\mathbf{a}$ has in-degree and out-degree:
\begin{align*}
2 & \text{~~if~~} w^*(\mathbf{a})=k(n-1)/2 \\
1 & \text{~~if~~} w^*(\mathbf{a})\in\{kn/2-i:1\leq i\leq k-1\} \\
0 & \text{~~otherwise}.
\end{align*}
\end{itemize}
\end{lemma}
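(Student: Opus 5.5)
Fix a vertex $\mathbf{a}=(a_1,\dots,a_{n-1})$ of $B_k(n-1)$. Its out-edges are the $n$-tuples $(a_1,\dots,a_{n-1},x)$ for $x\in\mathbb{Z}_k$, and its in-edges are $(x,a_1,\dots,a_{n-1})$. Both kinds of edge have pseudoweight $w^*(\mathbf{a})+w^*(x)$, where $w^*(x)=x$ for $x\neq0$ and $w^*(0)=k/2$. So the in-degree and out-degree of $\mathbf{a}$ in $H_k(n-1)$ are both equal to the number $N$ of symbols $x\in\mathbb{Z}_k$ with $w^*(x)=kn/2-w^*(\mathbf{a})$. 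This immediately gives equality of in- and out-degree. The whole problem then reduces to counting, for each target value $t=kn/2-w^*(\mathbf{a})$, how many $x$ have single-symbol pseudoweight $t$.

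The multiset of single-symbol pseudoweights is $\{1,2,\dots,k-1\}\cup\{k/2\}$, where $k/2$ comes from the symbol $0$. I will split into two cases according to the parity of $k$.

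If $k$ is odd, $k/2$ is a half-integer distinct from all of $1,\dots,k-1$, so every attainable value is attained exactly once. Hence $N=1$ when $t\in\{1,\dots,k-1\}$, which is the condition $w^*(\mathbf{a})\in\{kn/2-i:1\le i\le k-1\}$. Also $N=1$ when $t=k/2$, which is $w^*(\mathbf{a})=k(n-1)/2$. Otherwise $N=0$.

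If $k$ is even, $k/2\in\{1,\dots,k-1\}$, so the value $k/2$ is attained twice, by $x=0$ and by $x=k/2$, and every other value in $\{1,\dots,k-1\}$ is attained once. This gives $N=2$ when $w^*(\mathbf{a})=k(n-1)/2$, $N=1$ for the remaining values $w^*(\mathbf{a})=kn/2-i$ with $i\neq k/2$, and $N=0$ otherwise. The statement lists the degree-$2$ case first, so its degree-$1$ set should be read as excluding $i=k/2$; I will note this explicitly.

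There is no real obstacle here; the only points needing care are the following.
\begin{itemize}
\item The pseudoweight of an $n$-tuple is additive over its entries, so it splits as the weight of the $(n-1)$-tuple plus the weight of the added symbol; this is immediate from the definition.
\item When $k$ is odd, the half-integer $k/2$ does not collide with any integer pseudoweight.
\item When $k$ is even, the symbols $0$ and $k/2$ have the same pseudoweight, which is what produces degree $2$.
\end{itemize}
Finally, I would remark that a vertex "of $H_k(n-1)$" with $N=0$ is simply isolated, and including it only makes the "$0$ otherwise" clause meaningful.
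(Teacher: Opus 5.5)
Your proposal is correct and follows the same route as the paper: both reduce the degree count to the number of symbols $x$ with $w^*(x)=kn/2-w^*(\mathbf{a})$, using that $w^*(x)=k/2$ exactly when $x=0$ (or also $x=k/2$ when $k$ is even). Your version is slightly more complete than the paper's, which writes out only the in-edge case. Your remark that for even $k$ the degree-$1$ set must exclude $i=k/2$ is also a correct clarification of the statement, since that value gives $w^*(\mathbf{a})=k(n-1)/2$, which is the degree-$2$ case.
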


\begin{proof}
If $(a_0,a_1,a_2\dots,a_{n-1})$ is an edge incoming to $\mathbf{a}$ then, since this edge must have
    pseudoweight $kn/2$, it follows by definition that

\[ \sum_{i=0}^{n-1}w^*(a_i) = kn/2, \]
i.e.\
\[ \sum_{i=1}^{n-1}w^*(a_i) = kn/2 - w^*(a_0). \]

\begin{itemize}
\item Suppose $k$ is odd. Since $w^*(a_0)\in\{i:1\leq i\leq k-1\}$ or $w^*(a_0)=k/2$, the
    result follows given that $w^*(a_0)=k/2$ if and only if $a_0=0$.
\item Suppose $k$ is even. Exactly the same argument applies except that $w^*(a_0)=k/2$ if and
    only if $a_0=0$ or $a_0=k/2$.
\end{itemize}
\end{proof}

We next present a simple way of dividing the edges ($n$-tuples) in $H_k(n-1)$ into edge-disjoint
necklaces.

\begin{lemma}  \label{lemma:k_odd_partitioning edges}  \label{lemma:defining_circuits}
Suppose $k\geq3$ and $n\geq 3$.
\begin{enumerate}
\item[i)] If $(a_0,a_1,\dots,a_{n-1})$ is a $k$-ary $n$-tuple of pseudoweight $kn/2$, i.e.\ an
    edge in $H_k(n-1)$, then $[a_0,a_1,\dots,a_{n-1}]$ is a necklace in $H_k(n-1)$ of
     period dividing $n$.
\item[ii)] Let~$\mathcal{C}_k(n-1)$ be the set of all necklaces of the form
    $[a_0,a_1,\dots,a_{n-1}]$, where $(a_0,a_1,\dots,a_{n-1})$ is an edge in $H_k(n-1)$; then
    $\mathcal{C}_k(n-1)$ forms an edge-disjoint partition of the edges in $H_k(n-1)$.
\end{enumerate}
\end{lemma}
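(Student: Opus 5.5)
The plan rests on one observation. Pseudoweight is a sum of per-symbol contributions $w^*(a_i)$, so it is invariant under cyclic shifts. Everything else reduces to elementary facts about cyclic shifts of tuples.

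For part i), let $(a_0,\dots,a_{n-1})$ have pseudoweight $kn/2$, and let $m$ be its period as in the definition of a necklace. Write $\sigma$ for the left cyclic shift. For each $j$, the tuple $\sigma^j(\mathbf{a})=(a_j,\dots,a_{n-1},a_0,\dots,a_{j-1})$ is a permutation of the entries of $\mathbf{a}$. Hence
\[ w^*(\sigma^j(\mathbf{a}))=\sum_i w^*(a_i)=kn/2, \]
so every $\sigma^j(\mathbf{a})$ is an edge of $H_k(n-1)$. In $B_k(n-1)$ the edge $\sigma^j(\mathbf{a})$ ends at the vertex $(a_{j+1},\dots,a_{n-1},a_0,\dots,a_{j-1})$. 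This is exactly the starting vertex of $\sigma^{j+1}(\mathbf{a})$, so consecutive shifts are consecutive edges and the shifts form a closed walk.

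Next I show the closed walk has length $m$ and that $m$ divides $n$. The edges $\sigma^0(\mathbf{a}),\dots,\sigma^{m-1}(\mathbf{a})$ are distinct, and $\sigma^m(\mathbf{a})=\mathbf{a}$. That $m\mid n$ is the standard argument: the set $\{c : \sigma^c(\mathbf{a})=\mathbf{a}\}$ taken mod $n$ is a subgroup of $\mathbb{Z}_n$, and $m$ is its least positive element, so $m$ generates it and divides $n$. Thus $[a_0,\dots,a_{n-1}]$ is a cycle lying entirely in $H_k(n-1)$, of period $m$ dividing $n$.

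For part ii), define a relation on the edges of $H_k(n-1)$: two edges are related if one is a cyclic shift of the other. This is an equivalence relation, being the orbit relation of the cyclic group $\langle\sigma\rangle$ acting on $n$-tuples. Part i) shows that each orbit consists of edges of pseudoweight $kn/2$. The necklace $[\mathbf{a}]$ has as its edge set exactly the orbit of $\mathbf{a}$. Each edge $\mathbf{a}$ lies in its own necklace $[\mathbf{a}]$, and two necklaces are either equal or edge-disjoint, because orbits partition the edge set. Therefore $\mathcal{C}_k(n-1)$ is an edge-disjoint partition of the edges of $H_k(n-1)$.

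I expect no real obstacle here. The only point needing care is that the list of $m$ shifts in the necklace definition really is the full orbit and has no repeats. That follows from the minimality of $m$ together with the divisibility argument above.
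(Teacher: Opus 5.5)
Your proposal is correct and follows the paper's own route. Invariance of pseudoweight under cyclic shifts gives (i), and the fact that each edge lies in exactly one cyclic-shift class (orbit) gives (ii). You also supply details the paper leaves implicit: that consecutive shifts are adjacent edges in $B_k(n-1)$, and the subgroup argument showing $m\mid n$.
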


\begin{proof}
\begin{enumerate}
\item[i)] Suppose $(a_0,a_1,\dots,a_{n-1})$ is any edge in $H_k(n-1)$.  Then every cyclic shift
    of $(a_0,a_1,\dots,a_{n-1})$ is also an edge in $H_k(n-1)$, since cyclic shifting does not
    alter pseudoweight.  That is, $[a_0,a_1,\dots,a_{n-1}]$ is a necklace in $H_k(n-1)$ with
    period that divides $n$.
\item[ii)] Any edge $(a_0,a_1,\dots,a_{n-1})$ is in a unique necklace of this type, and hence
    these necklaces cover all possible edges and are edge-disjoint.
\end{enumerate}
\end{proof}

\begin{corollary}
If $k\geq3$ is odd and $n\geq3$ then the necklaces in $\mathcal{C}_k(n-1)$ form a unique vertex-
and edge-disjoint partition of $H_k(n-1)$.
\end{corollary}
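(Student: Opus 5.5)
The plan is to combine the edge-disjointness already given by Lemma~\ref{lemma:defining_circuits}(ii) with the degree information in Lemma~\ref{lemma:Hkn-1_vertex_degrees} for odd $k$. Lemma~\ref{lemma:defining_circuits}(ii) shows that the necklaces in $\mathcal{C}_k(n-1)$ partition the edges of $H_k(n-1)$. What remains is to show that no two distinct necklaces share a vertex, and that this partition is the only one possible.

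For vertex-disjointness I will use the first bullet of Lemma~\ref{lemma:Hkn-1_vertex_degrees}. When $k$ is odd, every vertex of $H_k(n-1)$ (one incident with at least one edge) has in-degree and out-degree exactly $1$. A necklace $[a_0,\dots,a_{n-1}]$ of period $m$ is a closed walk. Each vertex on it has an incoming and an outgoing edge belonging to the necklace. Since the vertex has in-degree $1$ in $H_k(n-1)$, that incoming edge is its only incoming edge in the whole of $H_k(n-1)$, and the same holds for the outgoing edge.

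Now suppose two necklaces $C,C'\in\mathcal{C}_k(n-1)$ share a vertex $\mathbf{v}$. Then the unique outgoing edge of $\mathbf{v}$ lies in both $C$ and $C'$. By Lemma~\ref{lemma:defining_circuits}(ii) each edge lies in exactly one necklace, so $C=C'$. Hence distinct necklaces are vertex-disjoint. Moreover every vertex of $H_k(n-1)$ lies on some necklace, namely the one containing its outgoing edge. So the necklaces partition both the vertices and the edges.

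For uniqueness, I interpret the claim as saying that the partition of $H_k(n-1)$ into vertex-disjoint cycles is unique. Since every vertex has in-degree and out-degree $1$, $H_k(n-1)$ is a disjoint union of directed cycles. In such a graph the connected components are the cycles themselves, so there is exactly one way to decompose it into edge-disjoint circuits. These components must therefore coincide with the necklaces. The only point needing care is that a necklace traversed once is a genuine simple cycle, i.e.\ it visits $m$ distinct vertices. This also follows from the degree-$1$ condition: a repeated vertex would have two distinct outgoing edges among the $m$ distinct edges of the necklace. I do not expect a substantial obstacle; the main subtlety is stating precisely what ``unique'' means here.
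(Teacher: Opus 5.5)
Your proposal is correct and follows essentially the same route as the paper: the odd-$k$ case of the vertex-degree lemma (in- and out-degree at most one, in fact exactly one at any vertex incident with an edge), combined with the edge partition into necklaces, gives both vertex-disjointness and uniqueness. You spell out what the paper calls immediate: a shared vertex forces a shared outgoing edge, and $H_k(n-1)$ is a disjoint union of directed cycles, so its decomposition into circuits is unique and each necklace is a simple cycle.
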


\begin{proof}
From Lemma~\ref{lemma:k_odd_vertex_degrees}, if $k$ is odd, every vertex in $H_k(n-1)$ has
in-degree and out-degree at most one, and hence the necklaces in $\mathcal{C}_k(n-1)$ are
vertex-disjoint. Finally, the fact that this partition is unique follows immediately from the fact
that the in-degree (and out-degree) of every vertex in $H_k(n-1)$ is at most one.
\end{proof}

\begin{remark}  \label{remark:other_partitions}
Whilst the partition $\mathcal{C}_k(n-1)$ is a unique partition of the edges in $H_k(n-1)$ when $k$
is odd, this is not the case for $k$ even; neither is the partition vertex-disjoint in the $k$ even
case. That is, there may be other partitions for the $k$ even case which give better results than
those we achieve below using $\mathcal{C}_k(n-1)$. This remains an area
for future work.
\end{remark}

The above lemma motivates the following extension of the notion of negasymmetry.

\begin{definition}
A necklace in $H_k(n-1)$ is said to be \emph{negasymmetric} if there are edges, $\mathbf{a}$ and
$\mathbf{b}$ say (not necessarily distinct), in this necklace such that $\mathbf{a}=-\mathbf{b}^R$.
A necklace is \emph{not} negasymmetric, i.e.\ \emph{non-negasymmetric}, only if the subgraph of
$B_k(n-1)$ it defines is antinegasymmetric.
\end{definition}

Of course, if the necklace contains a negasymmetric edge then clearly the circuit is negasymmetric.

We have the following simple technical results.

\begin{lemma}  \label{lemma:nega_for_c}
Suppose $n\geq3$, $k\geq3$, and $c\mid n$.  Then:
\begin{itemize}
\item[(i)] Suppose $(a_0,a_1,\dots,a_{n-1})$ is a $k$-ary $n$-tuple with $a_{i}=a_{i+c}$ for
    every $i$ ($0\leq i\leq n-c-1$).  Then $(a_0,a_1,\dots,a_{n-1})$ is a negasymmetric
    $n$-tuple if and only if $(a_0,a_1,\dots,a_{c-1})$ is a negasymmetric $c$-tuple. Moreover,
    in this case if $c$ is odd then $a_{(c-1)/2}=0$ or $k/2$ (the latter only applying if $k$
    is even).
\item[(ii)] Suppose $[a_0,a_1,\dots,a_{n-1}]$ is a necklace in $B_k(n-1)$ of period dividing
    $c$. Then $[a_0,a_1,\dots,a_{n-1}]$ is a negasymmetric necklace in $B_k(n-1)$ if and only
    if $[a_0,a_1,\dots,a_{c-1}]$ is a negasymmetric necklace in $B_k(c-1)$. Moreover,
    $[a_0,a_1,\dots,a_{n-1}]$ is a negasymmetric necklace in $H_k(n-1)$ if and only if
    $[a_0,a_1,\dots,a_{c-1}]$ is a negasymmetric necklace in $H_k(c-1)$.
\end{itemize}
\end{lemma}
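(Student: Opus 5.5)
The plan is to reduce everything to the single index identity that, for an $n$-tuple with $a_i=a_{i+c}$ and $c\mid n$, the entry $a_j$ depends only on $j \bmod c$, together with $n\equiv 0 \pmod c$. Write $\mathbf{a}'=(a_0,\dots,a_{c-1})$ for the truncation. Then $\mathbf{a}$ is the $n/c$-fold concatenation of $\mathbf{a}'$, and for $0\le i\le n-1$,
\[ a_{n-1-i}=a_{\overline{(c-1-i)}}, \]
with the index reduced mod $c$. This holds because $n-1-i\equiv c-1-i \pmod c$.

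For (i), negasymmetry of $\mathbf{a}$ means $a_i=-a_{n-1-i}$ for all $0\le i\le n-1$. By the identity above this is $a_i=-a_{\overline{c-1-i}}$. Since both sides depend only on $i \bmod c$, it is enough to check it for $0\le i\le c-1$, and there it reads exactly $a_i=-a_{c-1-i}$, i.e.\ $\mathbf{a}'=-\mathbf{a}'^R$. For the converse, if $\mathbf{a}'$ is negasymmetric, the same computation run backwards gives $a_i=-a_{n-1-i}$ for every $i$. For the supplementary claim, take $c$ odd and $i=(c-1)/2$. Then $a_i=-a_i$, so $2a_i\equiv 0 \pmod k$. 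This forces $a_i=0$, or $a_i=k/2$ when $k$ is even, as in Lemma~\ref{lemma:number_of_nega_tuples}.

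For (ii), I will first set up a bijection between the edges of $[a_0,\dots,a_{n-1}]$ and those of $[a_0,\dots,a_{c-1}]$. The edges of the $n$-necklace are the cyclic shifts of $\mathbf{a}$ by $j$ places. Each such shift is again $c$-periodic, and its first $c$ entries are the shift of $\mathbf{a}'$ by $j \bmod c$ places. The map ``take the first $c$ entries'' therefore sends edges of the big necklace to edges of the small one. The two necklaces have the same period $m$ (with $m\mid c$), since for a $c$-periodic tuple the least shift fixing the tuple is the same computed mod $n$ or mod $c$. Hence this map is a bijection.

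Next I will show that the relation $\mathbf{u}=-\mathbf{v}^R$ is preserved in both directions under this bijection. If $\mathbf{u},\mathbf{v}$ are $c$-periodic $n$-tuples, then $-\mathbf{v}^R$ is also $c$-periodic, and by the index identity its first $c$ entries are $-(\mathbf{v}')^R$. Two $c$-periodic $n$-tuples are equal if and only if their first $c$ entries agree. So $\mathbf{u}=-\mathbf{v}^R$ holds if and only if $\mathbf{u}'=-(\mathbf{v}')^R$. Combining this with the bijection of edges gives the first statement of (ii), directly from the definition of a negasymmetric necklace.

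For the $H_k$ statement I only need pseudoweights: $w^*(\mathbf{a})=(n/c)\,w^*(\mathbf{a}')$. Hence $w^*(\mathbf{a})=kn/2$ if and only if $w^*(\mathbf{a}')=kc/2$. So one necklace lies in $H_k(n-1)$ exactly when the other lies in $H_k(c-1)$, and the second statement follows from the first.

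The main obstacle is not any computation but the bookkeeping in (ii): making sure that shifts and reversal commute correctly with truncation to $c$ entries, and that the two necklaces genuinely have the same edge count. I also need to note that for $c\in\{1,2\}$ the graph $B_k(c-1)$ is degenerate, with $B_k(0)$ a single vertex carrying loops. The argument is purely about tuples, so it goes through unchanged in that case.
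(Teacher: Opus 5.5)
Your proposal is correct and is essentially the paper's argument. Part (i) is the same reduction of indices modulo $c$ using $c\mid n$, and in part (ii) your truncation map on edge pairs with $\mathbf{u}=-\mathbf{v}^R$ is the edge-level form of the paper's step of reducing the identity $a_i=-a_{\overline{t-i}}$ from modulus $n$ to modulus $c$. Your extra check that the two necklaces have the same period is not needed, since all you use is that truncation maps the cyclic shifts of $(a_0,\dots,a_{n-1})$ onto those of $(a_0,\dots,a_{c-1})$.
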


\begin{proof}
\begin{itemize}

\item[(i)] If $(a_0,a_1,\dots,a_{n-1})$ is negasymmetric, then $a_i=-a_{n-1-i}$ for every $i$
    ($0\leq i\leq n-1$).  But, since $a_{i}=a_{i+c}$ for every $i$ and $c\mid n$,
    $a_{n-1-i}=a_{c-1-i}$ for every $i$, and hence $(a_0,a_1,\dots,a_{c-1})$ is negasymmetric.
    If $(a_0,a_1,\dots,a_{c-1})$ is negasymmetric, then $a_i=-a_{c-1-i}$ for every $i$ ($0\leq
    i\leq c-1$).  But $a_{tc+i}=a_{i}$ for every $t$ ($0\leq t\leq n/c-1$), and hence
    $a_i=-a_{n-1-i}$ for every $i$, and so $(a_0,a_1,\dots,a_{n-1})$ is negasymmetric. Finally,
    if $(a_0,a_1,\dots,a_{c-1})$ is negasymmetric and $c$ is odd then, by definition,
    $a_{(c-1)/2}=-a_{(c-1)/2}$ and the result follows.

\item[(ii)] If $[a_0,a_1,\dots,a_{n-1}]$ is negasymmetric then $a_i=-a_{\overline{t-i}}$ for
    some $t$ and every $i$ ($0\leq i\leq n-1$), where the bar indicates reduction modulo $n$.
    Hence $a_i=-a_{\overline{\overline{t-i}}}$, where the double bar indicates reduction modulo
    $c$, since $c\mid n$.  Hence $[a_0,a_1,\dots,a_{c-1}]$ is negasymmetric. If
    $[a_0,a_1\dots,a_{c-1}]$ is negasymmetric, then $a_i=-a_{\overline{\overline{t-i}}}$ for
    some $t$ and every $i$ ($0\leq i\leq c-1$). But $a_{tc+i}=a_{i}$ for every $t$ ($0\leq
    t\leq n/c-1$), and hence $a_i=-a_{\overline{t-i}}$ for every $i$ ($0\leq i\leq n-1$), and
    thus $[a_0,a_1\dots,a_{n-1}]$ is negasymmetric. Finally, it is immediate that
    $[a_0,a_1,\dots,a_{n-1}]$ has pseudoweight $kn/2$ if and only if $[a_0,a_1,\dots,a_{c-1}]$
    has pseudoweight $kc/2$.
\end{itemize}
\end{proof}

\begin{lemma}  \label{lemma:multiple_negasymmetric_tuples}
Suppose $n\geq3$, $k\geq3$ and $[a_0,a_1,\dots,a_{n-1}]$ is a necklace in $B_k(n-1)$ containing two
negasymmetric $n$-tuples at distance $t$ apart; without loss of generality suppose $0<t\leq n/2$.
Then $[a_0,a_1\dots,a_{n-1}]$ has period dividing $2t$.
\end{lemma}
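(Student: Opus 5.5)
The plan is to turn each negasymmetric tuple into a reflection of the cyclic index set $\mathbb{Z}_n$, and then use the fact that composing two reflections gives a translation.

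Write the necklace as the cyclic word $(a_i)_{i\in\mathbb{Z}_n}$, with indices reduced modulo $n$ as in the notation $\overline{x}$. The $n$-tuple starting at position $s$ is $(a_s,a_{s+1},\dots,a_{s+n-1})$. This tuple is negasymmetric if and only if
\[ a_{s+j}=-a_{s+n-1-j} \quad\text{for all } 0\le j\le n-1. \]
Equivalently, $a_i=-a_{\overline{2s+n-1-i}}$ for every $i\in\mathbb{Z}_n$. So a negasymmetric tuple at position $s$ gives a reflection $\rho_s:i\mapsto \overline{2s+n-1-i}$ of $\mathbb{Z}_n$ satisfying $a_i=-a_{\rho_s(i)}$ for all $i$.

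Now suppose the two negasymmetric tuples start at positions $s$ and $s+t$. Applying the two relations in succession gives
\[ a_i=-a_{\rho_{s+t}(i)}=a_{\rho_s(\rho_{s+t}(i))}. \]
Computing the composition,
\[ \rho_s(\rho_{s+t}(i))=2s+n-1-\bigl(2(s+t)+n-1-i\bigr)=i-2t \pmod n. \]
Hence $a_i=a_{\overline{i-2t}}$ for all $i$, so $2t$ is a period of the cyclic word. The two negations cancel, which is why the result is a genuine period and not an anti-period.

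It remains to pass from ``$2t$ is a period'' to ``the period $m$ divides $2t$''. The set of shifts $c\in\mathbb{Z}$ with $a_i=a_{\overline{i+c}}$ for all $i$ is a subgroup of $\mathbb{Z}$. It contains $n$ and $2t$, and it is generated by the minimal period $m$, which divides $n$. Since $2t$ lies in this subgroup, $m\mid 2t$.

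The edge case $t=n/2$ is consistent with this: then $2t=n$ and the conclusion is trivial. The hypothesis $0<t\le n/2$ is only a normalisation, since distance $t$ one way is distance $n-t$ the other, and $2(n-t)\equiv-2t \pmod n$.

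I expect the only real obstacle to be bookkeeping. One has to state carefully that a negasymmetric tuple corresponds to the global reflection identity over all of $\mathbb{Z}_n$, not just over the $n$ positions of that tuple. This holds because the tuple covers all $n$ cyclic positions exactly once.
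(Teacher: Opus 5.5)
Your proof is correct and is essentially the paper's argument. The paper also writes the two negasymmetry conditions as $a_{\overline{i}}=-a_{\overline{-1-i}}$ and $a_{\overline{j}}=-a_{\overline{2t-1-j}}$ (your reflections $\rho_0$ and $\rho_t$) and chains them to obtain $a_{\overline{i}}=a_{\overline{2t+i}}$; your subgroup argument for why the minimal period then divides $2t$ fills in the step the paper dismisses with ``the desired result follows''.
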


\begin{proof}
Without loss of generality suppose the two negasymmetric $n$-tuples are
\[ (a_0,a_1,\dots,a_{n-1}) \text{~~and~~} (a_{\overline{t}},a_{\overline{t+1}}\dots,a_{\overline{t-1}}) \]
where $\overline{s}=s\bmod n$.  Then, by definition
\[ a_{\overline{i}}=-a_{\overline{-1-i}} \text{~~and~~} a_{\overline{t+i}}=-a_{\overline{t-1-i}} \]
for every $i$ ($0\leq i<n$).  Setting $j=i+t$ in the second equation gives
\[ a_{\overline{j}}=-a_{\overline{2t-1-j}} \]
for every $j$ ($0\leq j<n$).  Hence
\[ a_{\overline{-1-i}} = a_{\overline{2t-1-i}} \]
for every $i$ ($0\leq i<n$), and hence
\[ a_{\overline{i}} = a_{\overline{2t+i}} \]
for every $i$ ($0\leq i<n$). The desired result follows.
\end{proof}

\begin{corollary}  \label{corollary:two_negatuples_means_n_even}
Suppose $n\geq3$, $k\geq3$, and that $[a_0,a_1,\dots,a_{n-1}]$ has
period $c$ and contains two distinct negasymmetric $n$-tuples. Then $c$ is even and the two
negasymmetric $n$-tuples must be at distance exactly $c/2$ apart.
\end{corollary}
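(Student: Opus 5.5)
The plan is to apply Lemma~\ref{lemma:multiple_negasymmetric_tuples} and then use the minimality of the period $c$. Label the two distinct negasymmetric $n$-tuples by their starting positions in the necklace, and let $t$ be the distance between them, taken modulo $c$. We choose the representative of the distance that lies in $0<t\leq c/2$; this is possible because a pair at distance $s$ forwards is at distance $c-s$ backwards. Since the tuples are distinct, $t\not\equiv 0 \pmod c$. We may regard both tuples as being at distance $t$ in the $n$-position indexing as well, since shifting by $c$ fixes every tuple. The hypothesis $0<t\leq n/2$ of Lemma~\ref{lemma:multiple_negasymmetric_tuples} then holds, because $c\leq n$. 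The lemma gives $a_{\overline{i}}=a_{\overline{i+2t}}$ for all $i$.

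The key step is to convert this into a divisibility statement about $c$. The set of shifts $s$ with $a_{\overline{i+s}}=a_{\overline{i}}$ for all $i$ is a subgroup of $\mathbb{Z}_n$, generated by the period $c$. So $c\mid 2t$. Combined with $0<2t\leq c$, this forces $2t=c$. Hence $c$ is even and $t=c/2$. The distance measured the other way round is $c-t=c/2$ too, so the statement does not depend on which direction we measure.

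The main point needing care is the reduction to $0<t\leq c/2$ and checking that the lemma still applies when distances are measured modulo $c$ rather than modulo $n$. If the tuples sit at positions differing by some $s$ with $0<s<n$, the lemma's proof only uses $s$ through the congruence $a_{\overline{j}}=-a_{\overline{2s-1-j}}$, so replacing $s$ by $t\equiv \pm s \pmod c$ is legitimate. I would justify this briefly by noting that the tuple starting at position $s$ equals the one starting at $s \bmod c$. Everything else is routine.
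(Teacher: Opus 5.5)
Your proposal is correct and follows essentially the same route as the paper: reduce to a distance $0<t\le c/2$, invoke Lemma~\ref{lemma:multiple_negasymmetric_tuples} to get $c\mid 2t$, and conclude $2t=c$, so $c$ is even and $t=c/2$. The paper's proof is terse and leaves two points implicit. You make both explicit: measuring the distance modulo $c$ is legitimate because shifting by $c$ fixes each tuple, and $c\mid 2t$ holds because the shift-invariances form the subgroup of $\mathbb{Z}_n$ generated by $c$.
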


\begin{proof}
Suppose the two negasymmetric $n$-tuples are at distance $t$ apart, where, without loss of
generality, $0< t\leq c/2$. Since the two tuples are distinct, $c$ cannot be a factor of $t$;
however, by Lemma~\ref{lemma:multiple_negasymmetric_tuples}, $c\mid 2t$, and hence $c$ must be
even. Since $c\mid 2t$ and $t\leq c/2$ we have $t=c/2$, and the result follows.
\end{proof}

\subsection{Examples}  \label{subsection:examples}

To illustrate the above results we next give some simple examples.

\begin{example}  \label{example:n3k3}
Suppose $n=k=3$. First observe that, from \cite[Table 2]{Mitchell25a},
$r_{k,n,kn/2}=r_{3,3,4.5}=7$. The seven edges in $H_3(2)$ can be partitioned into three necklaces:
\[ [000], [012], [021] \]
of periods 1, 3 and 3, respectively. All three necklaces are negasymmetric and contain one
negasymmetric $3$-tuple, accounting for the three negasymmetric 3-tuples (see
Lemma~\ref{lemma:number_of_nega_tuples}).
\end{example}

\begin{example}  \label{example:n3k5}
Suppose $n=3$ and $k=5$. From \cite[Table 2]{Mitchell25a}, $r_{k,n,kn/2}=r_{5,3,7.5}=13$. The 13
edges in $H_5(2)$ can be partitioned into five necklaces:
\[ [000], [014], [041], [023], [032] \]
all of which have period 3 except the first. All five necklaces are negasymmetric and contain one
negasymmetric $3$-tuple, accounting for the five negasymmetric 3-tuples (see
Lemma~\ref{lemma:number_of_nega_tuples}).
\end{example}

\begin{example}  \label{example:n4k3}
Suppose $n=4$ and $k=3$. From \cite[Table 2]{Mitchell25a}, $r_{k,n,kn/2}=r_{3,4,6}=19$. The 19
edges in $H_3(3)$ can be partitioned into six necklaces:
\[ [0000], [1212], [0012], [0021], [0102], [1122] \]
all of which have period 4 except the first and second (with periods 1 and 2 respectively). All six
necklaces are negasymmetric.  [0102] contains no negasymmetric 4-tuples, [0000] contains one
negasymmetric 4-tuple, and the other four necklaces contain two negasymmetric 4-tuples.  We write
$N_0=1, N_1=1, N_2=4$ to denote this. This accounts for the nine negasymmetric 4-tuples (see
Lemma~\ref{lemma:number_of_nega_tuples}).
\end{example}

\begin{example}  \label{example:n5k3}
Suppose $n=5$ and $k=3$. From \cite[Table 2]{Mitchell25a}, $r_{k,n,kn/2}=r_{3,5,7.5}=51$. The 51
edges in $H_3(4)$ can be partitioned into 11 necklaces, all but the first of which have period 5:
\[
\begin{split}
 & [00000], [00012], [00021], [00102], [00201], 01122], [01212], \\
 & [02211], [02121], [01221], [02112].
\end{split}
\]
The first nine of these necklaces are negasymmetric and contain one negasymmetric $3$-tuple, but
the last two, i.e.\ [01221] and [02112], are not negasymmetric. That is, as we discuss below, one
of these two necklaces could be joined to $S$ to result in a $\mathcal{NOS}_3(5)$ of period five
greater than the period given in \cite[Table 3]{Mitchell25a}, i.e.\ 96+5=101.  The set of 101
5-tuples making up $S$ can, in turn, be used to construct an $\mathcal{OS}_3(6)$ of period
$k\times101=303$ (greater than the previous record of 288 and close to the best known upper bound
of 315).
\end{example}

\begin{example}  \label{example:n6k3}
Suppose $n=6$ and $k=3$. From \cite[Table 2]{Mitchell25a}, $r_{k,n,kn/2}=r_{3,6,9}=141$.  The 141
edges in $H_3(5)$ can be partitioned into 26 necklaces: one of period 1, one of period 2, two of
period 3, and 22 of period 6.  Eight of these 26 necklaces, all of period 6, are non-negasymmetric,
namely:
\[ [001221], [002112], [010122], [020211], [010212], [020121], [010221], [020112]. \]
A further three necklaces of period 6 are negasymmetric, but do not contain any negasymmetric
6-tuples, namely:
\[ [000102], [000201], [011022]. \]
Three of the remaining necklaces, namely those with periods 1 and 3, are negasymmetric and contain
a single negasymmetric 6-tuple:
\[ [000000], ~~~~[012012], [021021]. \]
Observe that the two necklaces of period 3 yield the same sequences  as the negasymmetric necklaces
$[012]$ and $[021]$ in $B_3(2)$.  Finally, the remaining 12 necklaces, one of period 2 and eleven
of period 6, contain two negasymmetric 6-tuples:
\[
\begin{split}
 & [121212], [000012], [000021], [001002], [001122], 002211], \\
 & [001212], [002121], [012021], [111222], [112212], [221121].
\end{split}
\]
Using the previous notation, this means that $N_0=3$, $N_1=3$ and $N_2=12$; this
accounts for the 27 negasymmetric 6-tuples (see
Lemma~\ref{lemma:number_of_nega_tuples}).

So, as discussed below, the 6-tuples in four of the eight non-negasymmetric 6-tuples, i.e.\ a total
of 24 6-tuples, could be joined to $S$ to result in a $\mathcal{NOS}_3(6)$ of period 24 greater
than the period given in \cite[Table 3]{Mitchell25a}, i.e.\ 294+24=318. The set of 318 6-tuples
making up $S$ can, in turn, be used to construct an $\mathcal{OS}_3(7)$ of period $k\times318=954$
(greater than the previous record of 882 and close to the best known upper bound of 972).
\end{example}

\begin{example}  \label{example:n3k4}
Suppose $n=3$ and $k=4$. From \cite[Table 2]{Mitchell25a}, $r_{k,n,kn/2}=r_{4,3,6}=20$.  The 20
edges in $H_4(2)$ can be partitioned into 8 necklaces: two of period 1 and six of period 3, all of
which are negasymmetric.  All the eight necklaces contain a single negasymmetric 3-tuple,
accounting for the eight negasymmetric 3-tuples (see Lemma~\ref{lemma:number_of_nega_tuples}).
\end{example}

\begin{example}  \label{example:n4k4}
Suppose $n=4$ and $k=4$. From \cite[Table 2]{Mitchell25a}, $r_{k,n,kn/2}=r_{4,4,8}=70$.  The 70
edges in $H_4(3)$ can be partitioned into 20 necklaces: two of period 1, two of period 2 and 16 of
period 4. Four of the necklaces, all of period 4, are non-negasymmetric, namely:
\[ [0132], [0213], [0231], [0312]. \]

A further seven necklaces, six of of period 4 and one of period 2 are negasymmetric, but do not
contain any negasymmetric 4-tuples, namely:
\[ [0002], [0222], [0103], [0123], [0321], [1232],~~~~[0202]. \]

Two of the remaining necklaces, namely those with period 1, are negasymmetric and contain a single
negasymmetric 4-tuple:
\[ [0000], [2222]. \]

Finally, the remaining seven necklaces, one of period 2 and six of period 4, each contain two
negasymmetric 4-tuples:
\[ [1313],~~~~[0022], [0013], [1003], [2213], [2231], [1133]. \]
This means that $N_0=7$, $N_1=2$ and $N_2=7$, accounting for the 16 negasymmetric
4-tuples (see Lemma~\ref{lemma:number_of_nega_tuples}).

So, the 4-tuples in two of the four non-negasymmetric 4-tuples, i.e.\ a total of eight 4-tuples,
could be joined to $S$ to result in a $\mathcal{NOS}_4(4)$ of period 8 greater than the period
given in \cite[Table 3]{Mitchell25a}, i.e.\ 93+8=101. The set of 101 4-tuples making up $S$ can, in
turn, be used to construct an $\mathcal{OS}_4(5)$ of period $k\times 101=404$.
\end{example}

In the remainder of the paper we generalise the observations made in the examples to
obtain formulae for the number of $n$-tuples that can be added to $E_k(n-1)$ whilst preserving the
key properties that enable us to construct a negative orientable sequence containing precisely
these tuples.

\subsection{Properties of necklaces in \texorpdfstring{$\mathcal{C}_k(n-1)$}{Ck(n-1)}}

As mentioned previously, the goal of this paper is to
consider how we might add certain $n$-tuples of pseudoweight $kn/2$ to $E_k(n-1)$ whilst preserving
its antinegasymmetry and its Eulerian property. This would enable us to construct negative
orientable sequences of period greater than $\frac{k^n-r_{k,n,kn/2}}{2}$.

To achieve this goal we need to consider the properties of the necklaces in $\mathcal{C}_k(n-1)$.
We first have the following key result for odd $k$.

\begin{lemma}  \label{lemma:complete_ciruits_odd_k}
Suppose $k\geq3$ is odd and $n\geq 3$. Suppose $X$ is an Eulerian and antinegasymmetric subset of
the edges of $B_k(n-1)$ containing $E_k(n-1)$ as a subset. Then $X-E_k(n-1)$ only contains edges
corresponding to complete necklaces from $\mathcal{C}_k(n-1)$, and these necklaces cannot be
negasymmetric.
\end{lemma}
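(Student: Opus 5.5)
First I pin down which edges can lie in $X-E_k(n-1)$. By assumption $E_k(n-1)\subseteq X$. If $\mathbf{e}\in X$ has pseudoweight greater than $kn/2$, consider $-\mathbf{e}^R$. Negation sends each nonzero symbol $a$ to $k-a$, so $w^*(a)+w^*(-a)=k$ for every symbol (for $a=0$ both terms are $k/2$). Reversal does not change pseudoweight. Hence $w^*(-\mathbf{e}^R)=kn-w^*(\mathbf{e})<kn/2$, so $-\mathbf{e}^R\in E_k(n-1)\subseteq X$. This contradicts antinegasymmetry of $X$. Therefore every edge of $X-E_k(n-1)$ has pseudoweight exactly $kn/2$, i.e.\ it is an edge of $H_k(n-1)$.

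Next I use the Eulerian property to force whole necklaces. Let $Y=X-E_k(n-1)\subseteq H_k(n-1)$. Both $X$ and $E_k(n-1)$ have in-degree equal to out-degree at every vertex; for $E_k(n-1)$ this is \cite[Theorem 5.3]{Mitchell26}, quoted in Subsection~\ref{subsection:notation}. The edge sets are disjoint, so subtracting degrees shows that every vertex has equal in- and out-degree in $Y$. Now take $k$ odd. By Lemma~\ref{lemma:k_odd_vertex_degrees} and the Corollary after Lemma~\ref{lemma:defining_circuits}, every vertex of $H_k(n-1)$ has in- and out-degree at most one, and $H_k(n-1)$ is a vertex-disjoint union of the necklaces in $\mathcal{C}_k(n-1)$. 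Suppose $Y$ contains an edge $\mathbf{e}$ of some necklace $C$. Its head $\mathbf{v}$ then has in-degree $1$ in $Y$, so it has out-degree $1$ in $Y$. The unique outgoing edge of $\mathbf{v}$ in $H_k(n-1)$ is the next edge of $C$, so that edge is also in $Y$. Iterating around the cycle puts all of $C$ in $Y$. Thus $Y$ is a union of complete necklaces from $\mathcal{C}_k(n-1)$.

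Finally, each such necklace $C\subseteq X$ is non-negasymmetric. If $C$ were negasymmetric, it would contain edges $\mathbf{a},\mathbf{b}$, not necessarily distinct, with $\mathbf{a}=-\mathbf{b}^R$. Both edges lie in $X$, which violates the antinegasymmetry of $X$.

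The step needing the most care is the degree-subtraction in the second paragraph. It relies on $E_k(n-1)$ being balanced at every vertex of $B_k(n-1)$, including vertices where $E_k(n-1)$ has degree zero. It also relies on reading "Eulerian" for $X$ as at least "balanced at every vertex"; connectivity of $X$ is not needed for this argument.
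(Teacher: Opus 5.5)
Your proof is correct and follows the same route as the paper. You exclude edges of pseudoweight above $kn/2$ via antinegasymmetry, subtract degrees to obtain a balanced subgraph of $H_k(n-1)$, use the in/out-degree bound of one (for odd $k$) to force complete necklaces, and rule out negasymmetric necklaces by antinegasymmetry. You also make explicit two steps the paper only asserts: that $-\mathbf{e}^R$ has pseudoweight $kn-w^*(\mathbf{e})$ and hence lies in $E_k(n-1)$, and the edge-by-edge propagation around each necklace.
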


\begin{proof}
First observe that $X$ cannot contain any edges of pseudoweight greater than $kn/2$, and hence
$X-E_k(n-1)\subseteq H_k(n-1)$.  Since both $X$ and $E_k(n-1)$ are Eulerian, this means that every
vertex of the subgraph induced by $X-E_k(n-1)$ has in-degree equal to its out-degree.  Since no
vertex in $H_k(n-1)$ has in-degree or out-degree greater than one, the fact that $X-E_k(n-1)$ is
made up of necklaces follows from Lemma~\ref{lemma:k_odd_partitioning edges}. Finally, no necklace
in $X-E_k(n-1)$ can be negasymmetric, since that would contradict the requirement for $X$ to be
antinegasymmetric.
\end{proof}

\begin{remark}
It seems unlikely that the above result is true for $k$ even.
\end{remark}

In fact we can say more about the set of necklaces making up $\mathcal{C}_k(n-1)$, regardless of
whether $k$ and $n$ are odd or even.

\begin{lemma}  \label{lemma:circuit_pairs_k_odd}
Suppose $k\geq3$ and $n\geq 3$. If $[a_0,a_1,\dots,a_{n-1}]$ is a necklace in $\mathcal{C}_k(n-1)$
then $[-a_0,-a_1,\dots,-a_{n-1}]$ is also a necklace in $\mathcal{C}_k(n-1)$. Moreover, if
$[a_0,a_1,\dots,a_{n-1}]$ is non-negasymmetric then the reverse-complementary necklace
$[-a_{n-1},-a_{n-2},\dots,-a_{0}]$ is also non-negasymmetric and $[a_0,a_1,\dots,a_{n-1}]$ shares
no edges with $[-a_{n-1},-a_{n-2},\dots,-a_{0}]$.
\end{lemma}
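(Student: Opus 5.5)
The plan has three steps, matching the three claims in Lemma~\ref{lemma:circuit_pairs_k_odd}.

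\textbf{Negation.} First I will check how negation acts on pseudoweight. Each nonzero symbol $a$ has $w^*(a)=a\in\{1,\dots,k-1\}$, and $-a$ (computed mod $k$) equals $k-a$, so $w^*(-a)=k-w^*(a)$. A zero has pseudoweight $k/2$ and is sent to zero, so $w^*(-0)=k-k/2$ as well. Summing over the $n$ positions gives $w^*(-\mathbf{u})=kn-w^*(\mathbf{u})$. Hence if $(a_0,\dots,a_{n-1})$ has pseudoweight $kn/2$, so does $(-a_0,\dots,-a_{n-1})$. Negation commutes with cyclic shifts, so it maps the set of shifts of $(a_0,\dots,a_{n-1})$ onto the set of shifts of $(-a_0,\dots,-a_{n-1})$. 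By Lemma~\ref{lemma:defining_circuits}, $[-a_0,\dots,-a_{n-1}]$ is therefore a necklace in $\mathcal{C}_k(n-1)$.

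\textbf{Reverse-complement is a necklace and is non-negasymmetric.} Reversal preserves pseudoweight and sends the cyclic shifts of a tuple to the cyclic shifts of its reverse, so the same argument shows $[-a_{n-1},\dots,-a_0]$ is a necklace in $\mathcal{C}_k(n-1)$. Its edges are exactly the tuples $-\mathbf{e}^R$ with $\mathbf{e}$ an edge of $N=[a_0,\dots,a_{n-1}]$. Now suppose this reversed necklace $N'$ were negasymmetric. Then there would be edges $-\mathbf{e}^R$ and $-\mathbf{f}^R$ of $N'$ with $-\mathbf{e}^R = -(-\mathbf{f}^R)^R$. Since $(\cdot)^R$ and negation are commuting involutions, the right-hand side equals $\mathbf{f}$, so $\mathbf{f}=-\mathbf{e}^R$. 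Both $\mathbf{e}$ and $\mathbf{f}$ lie in $N$, so $N$ would be negasymmetric, contradicting the hypothesis.

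\textbf{No shared edges.} Suppose $N$ and $N'$ shared an edge $\mathbf{g}$. Because $\mathbf{g}\in N'$, we can write $\mathbf{g}=-\mathbf{e}^R$ for some edge $\mathbf{e}$ of $N$. Then $\mathbf{g}$ and $\mathbf{e}$ are two edges of $N$ with $\mathbf{g}=-\mathbf{e}^R$, so $N$ is negasymmetric by definition, which is a contradiction. (Equivalently, by the partition in Lemma~\ref{lemma:defining_circuits}(ii), $N$ and $N'$ either coincide or are edge-disjoint, and they cannot coincide.)

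The only step needing real care is the pseudoweight identity for the zero symbol; once that is checked, the remaining steps follow from formal involution identities, and I do not expect a genuine obstacle.
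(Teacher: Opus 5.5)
Your proposal is correct and takes essentially the same route as the paper. Negation preserves pseudoweight $kn/2$. Any negasymmetric relation in the reverse-complementary necklace, or any shared edge, pulls back to a relation $\mathbf{a}=-\mathbf{b}^R$ inside $[a_0,\dots,a_{n-1}]$, which contradicts non-negasymmetry. The differences are cosmetic: you argue with involution identities on edges and prove edge-disjointness directly, whereas the paper writes out the subscript relations and shows the two necklaces cannot coincide, relying on the partition of Lemma~\ref{lemma:defining_circuits}.
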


\begin{proof}
Since all the edges in $[a_0,a_1,\dots,a_{n-1}]$ have pseudoweight $nk/2$ then so do all the edges
in $[-a_0,-a_1,\dots,-a_{n-1}]$.  Hence $[-a_0,-a_1,\dots,-a_{n-1}]$ is a necklace in
$\mathcal{C}_k(n-1)$.

Next suppose $[a_0,a_1,\dots,a_{n-1}]$ is non-negasymmetric and
$[a_0,a_1,\dots,a_{n-1}]=[-a_{n-1},-a_{n-2},\dots,-a_{0}]$, where equality here means that they are
the same necklace.  Hence
$(a_0,a_1,\dots,a_{n-1})=(-a_{\overline{t}},-a_{\overline{t-1}},\dots,-a_{\overline{t+1}})$ for
some $t$, where here, as throughout, $\bar{u}$ denotes the unique integer satisfying $0\leq
\overline{u}\leq n-1$ and $u\equiv\overline{u}\pmod n$. But this means that
$[a_0,a_1,\dots,a_{n-1}]$ is negasymmetric, giving a contradiction.

Finally, if $[-a_{n-1},-a_{n-2},\dots,-a_{0}]$ is negasymmetric, then
\[ (-a_{n-1},-a_{n-2},\dots,-a_{0})=(a_{\overline{t}},a_{\overline{t+1}},\dots,a_{\overline{t-1}}) \]
for some $t$, and hence
\[ (a_{0},a_{1},\dots,a_{n-1})=(-a_{\overline{t-1}},-a_{\overline{t-2}},\dots,-a_{\overline{t}}),
\]
which again contradicts the assumption that the necklace $[a_0,a_1,\dots,a_{n-1}]$ is
non-negasymmetric.
\end{proof}

\begin{remark}
It follows from Lemma~\ref{lemma:circuit_pairs_k_odd} that the non-negasymmetric necklaces in
$\mathcal{C}_k(n-1)$ can be divided into pairs consisting of a necklace and the reverse of its
negative.  We refer to these as \emph{reverse-complementary pairs}.

More formally, define a mapping $I: H_k(n-1)\rightarrow H_k(n-1)$ such that
$(a_0,a_1,\dots,a_{n-1})\rightarrow(-a_{n-1},-a_{n-2},\dots,-a_{0})$, i.e. $I$ maps an $n$-tuple to
its reverse-negative. This induces a involution on the set of necklaces $\mathcal{C}_k(n-1)$ where
$[a_0,a_1,\dots,a_{n-1}]\rightarrow[-a_{n-1},-a_{n-2},\dots,-a_{0}]$. Necklaces fixed by $I$ are
negasymmetric, and the other orbits under $I$ are of size 2 (making up the reverse-complementary
pairs).
\end{remark}

\begin{example}  \label{example:revisit}
Revisiting Example~\ref{example:n5k3}, the two non-negasymmetric necklaces [01221] and [02112] form
a reverse-complementary pair.  Similarly, revisiting Example~\ref{example:n6k3}, the eight
non-negasymmetric necklaces can be divided into four reverse-complementary pairs, as follows:
\[
\begin{split}
([001221], [002112]);~~ ([010122], [020112]);\\
([010212], [020121]);~~ ([010221], [020211]).
\end{split}
\]
Similarly, looking at Example~\ref{example:n4k4}, the four non-negasymmetric necklaces can be
divided into two reverse-complementary pairs, as follows:
\[ ([0132], [0213]),~~([0231],[0312]). \]

\end{example}

Because of Lemma~\ref{lemma:complete_ciruits_odd_k}, it is thus of interest to know how many
reverse-complementary pairs of non-negasymmetric necklaces there are in $\mathcal{C}_k(n-1)$, and
what their periods are.  A first step in this direction is the following result.

\begin{lemma}  \label{lemma:nega_ciruit_means_nega_tuple_odd_k_odd_c}
Suppose $k\geq3$ and $n\geq 3$.  Suppose $[a_0,a_1,\dots,a_{n-1}]$ is a negasymmetric necklace in
$\mathcal{C}_k(n-1)$ of period $c$, where $c$ is odd ($c\mid n$).  Then
$(a_{\overline{s}},a_{\overline{s+1}},\dots,a_{\overline{s-1}})$ is a negasymmetric $n$-tuple for
some $s$ ($0\leq s\leq n-1$).  Moreover, $[a_0,a_1,\dots,a_{n-1}]$ contains precisely one
negasymmetric $n$-tuple.
\end{lemma}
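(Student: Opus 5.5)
The plan is to reduce to the base $c$-tuple using Lemma~\ref{lemma:nega_for_c}, solve the existence question there by a parity argument on indices, and then read uniqueness off Corollary~\ref{corollary:two_negatuples_means_n_even}.

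\textbf{Reduction.} Since the necklace has period $c$ and $c\mid n$, we have $a_i=a_{\overline{i+c}}$ for all $i$. By Lemma~\ref{lemma:nega_for_c}(i), a rotation $(a_{\overline{s}},\dots,a_{\overline{s-1}})$ is a negasymmetric $n$-tuple if and only if the $c$-tuple $(a_s,a_{s+1},\dots,a_{s+c-1})$ (indices mod $c$) is a negasymmetric $c$-tuple. By Lemma~\ref{lemma:nega_for_c}(ii), the necklace $[a_0,\dots,a_{c-1}]$ is negasymmetric. So it suffices to work with indices modulo $c$.

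\textbf{Existence.} Negasymmetry of the necklace means there is a $t$ with $a_i=-a_{t-i}$ for all $i$, indices mod $c$. This is the relation used in the proof of Lemma~\ref{lemma:nega_for_c}(ii): some rotation equals the negative reverse of another. I want an $s$ such that the rotation starting at $s$ satisfies $a_{s+j}=-a_{s+c-1-j}$ for all $j$. This amounts to requiring
\[ 2s+c-1\equiv t \pmod c. \]
Because $c$ is odd, $2$ is invertible mod $c$, so $s\equiv 2^{-1}(t-c+1)\pmod c$ exists. Taking this $s$ (lifted to $0\le s\le n-1$) gives the required negasymmetric $n$-tuple.

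\textbf{Uniqueness.} Suppose the necklace contained two distinct negasymmetric $n$-tuples. By Corollary~\ref{corollary:two_negatuples_means_n_even}, its period $c$ would then be even, contradicting that $c$ is odd. Since a necklace of period $c$ contains exactly $c$ distinct edges, "precisely one" means exactly one distinct tuple among these.

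\textbf{Main obstacle.} The delicate step is the index bookkeeping in the existence argument. One must pin down exactly what the negasymmetric-necklace relation gives: either $a_i=-a_{t-i}$, or the equivalent form obtained by writing one rotation as the negative reverse of another. Both reduce to a reflection $i\mapsto t-i$ mod $c$ composed with negation. The parity argument then needs only that this reflection has a "centre" $s+(c-1)/2$, which exists for every $t$ exactly when $c$ is odd.
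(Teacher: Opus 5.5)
Your proof is correct and essentially the paper's. Both derive $a_i=-a_{t-i}$ from negasymmetry of the necklace, locate the centre of this reflection using that $c$ is odd, and get uniqueness from Corollary~\ref{corollary:two_negatuples_means_n_even}. The only difference is presentational: the paper splits into $t$ odd ($s=(t+1)/2$) and $t$ even ($s=(t-c+1)/2$), and these are precisely the solutions of your single congruence $2s\equiv t+1 \pmod c$.
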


\begin{proof}
Since $[a_0,a_1,\dots,a_{n-1}]$ is negasymmetric, by definition we have
\[ (a_0,a_1,\dots,a_{n-1}) = (-a_{\overline{t}},-a_{\overline{t-1}},\dots,-a_{\overline{t+1}}) \]
for some $t$.  Hence
\[ a_{\overline{i}}=-a_{\overline{t-i}} \]
for every $i$, $0\leq i<n$.

If $t$ is odd, setting $i=(t+1)/2+j$ gives
\[ a_{\overline{(t+1)/2+j}}=-a_{\overline{(t+1)/2-1-j}}, \]
for every $j$, $0\leq j<n$.  Hence
\[ (a_{\overline{s}},a_{\overline{s+1}},\dots,a_{\overline{s-1}}) =
(-a_{\overline{s-1}},-a_{\overline{s-2}},\dots,-a_{\overline{s}}) \]
where $s=(t+1)/2$, and so $(a_{\overline{s}},a_{\overline{s+1}},\dots,a_{\overline{s-1}})$ is
negasymmetric.

If $t$ is even, setting $i=(t-c+1)/2+j$ (which is an integer since $c$ is odd) gives

\[ a_{\overline{(t-c+1)/2+j}}=-a_{\overline{t-(t-c+1)/2-j}}=
-a_{\overline{(t-c+1)/2+c-1-j}}= -a_{\overline{(t-c+1)/2-1-j}}\]

for every $j$, $0\leq j<n$, since $[a_0,a_1,\dots,a_{n-1}]$ has period $c$. Thus
\[ (a_{\overline{s}},a_{\overline{s+1}},\dots,a_{\overline{s-1}}) =
(-a_{\overline{s-1}},-a_{\overline{s-2}},\dots,-a_{\overline{s}}) \] where $s=(t-c+1)/2$, and so
$(a_{\overline{s}},a_{\overline{s+1}},\dots,a_{\overline{s-1}})$ is negasymmetric.  That is
$[a_0,a_1,\dots,a_{n-1}]$ contains at least one negasymmetric $n$-tuple.

The fact that the $n$-tuple is unique follows immediately from
Corollary~\ref{corollary:two_negatuples_means_n_even}.
\end{proof}

\begin{remark}  \label{remark:n_even_counterexample}  \label{remark:n_odd_OK}
Since $c\mid n$, Lemma~\ref{lemma:nega_ciruit_means_nega_tuple_odd_k_odd_c} immediately implies
that if $n$ is odd then all the negasymmetric necklaces in $\mathcal{C}_k(n-1)$ contain a unique
negasymmetric $n$-tuple. However, the above result does not hold if $c$ is even, and as a result we
examine the $n$ odd and even cases separately.

To see why the $c$ even case is different, suppose
\[ (a_0,a_1,\dots,a_{n-1}) = (-a_{\overline{t}},-a_{\overline{t-1}},\dots,-a_{\overline{t+1}}) \]
for some $t$.  If $t$ is odd then the same argument as in the above proof shows that there is a
negasymmetric $n$-tuple in the necklace $[a_0,a_1,\dots,a_{n-1}]$.  However, if $t$ is even then
the argument used above does not work, since $c$ being odd is key.  An example in the case $n=4$
and $k=3$ involves the necklace $[a_0=1,a_1=0,a_2=2,a_3=0]$.  It is simple to see that:
\[ (a_0,a_1,a_2,a_3)=(-a_2,-a_1,-a_0,-a_3) \]
i.e.\ the necklace is negasymmetric (in the above terminology $t=2$).  However, none of the four
4-tuples in the necklace are negasymmetric.
\end{remark}

\section{Adding necklaces from \texorpdfstring{$\mathcal{C}_k(n-1)$}{Ck(n-1)} to \texorpdfstring{$E_k(n-1)$}{Ek(n-1)}}  \label{section:adding_tuples}

As mentioned previously, the goal of this paper is to investigate how we can add additional
$n$-tuples to the set $E_k(n-1)$ while maintaining the property that the corresponding subgraph of
$B_k(n-1)$ is both Eulerian and antinegasymmetric. We can now give our main result.

\begin{theorem}  \label{theorem:Xkn-1_is_Eulerian}
Suppose $k\geq3$ and $n\geq 3$. Suppose $X_k(n-1)$ is the subset of edges of $B_k(n-1)$ made up of
the union of $E_k(n-1)$ and the sets of edges from one of each reverse-complementary pair of the
non-negasymmetric necklaces in $\mathcal{C}_k(n-1)$. Then:

\begin{itemize}
\item[(i)] the subgraph of $B_k(n-1)$ with edge set $X_k(n-1)$ and vertices with non-zero
    in-degree is Eulerian;
\item[(ii)] the subgraph of $B_k(n-1)$ with edge set $X_k(n-1)$ is antinegasymmetric.
\end{itemize}
\end{theorem}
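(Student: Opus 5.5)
The plan is to prove the two parts separately.

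Part (i) has two ingredients: balanced degrees and connectivity. For balance, each necklace in $\mathcal{C}_k(n-1)$ is a closed walk in $B_k(n-1)$. Its period is $m$ and its edges are the $m$ distinct cyclic shifts. So at every vertex each necklace contributes equal in-degree and out-degree. By \cite[Theorem 5.3]{Mitchell26}, every vertex of the subgraph with edge set $E_k(n-1)$ already has in-degree equal to out-degree. By Lemma~\ref{lemma:defining_circuits}(ii), the chosen necklaces are edge-disjoint from one another, and they are disjoint from $E_k(n-1)$ because their edges have pseudoweight exactly $kn/2$. Hence in-degree equals out-degree at every vertex of $X_k(n-1)$.

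For connectivity, the subgraph on $E_k(n-1)$ is connected by \cite[Section 3.3]{Mitchell25a}. It therefore suffices to show that every added necklace shares a vertex with that subgraph. Take an edge $(a_0,\dots,a_{n-1})$ of an added necklace. This tuple is not constant: the only constant tuple of pseudoweight $kn/2$ is the all-$0$ tuple (or, for even $k$, the all-$k/2$ tuple), and these are negasymmetric. Some cyclic shift therefore has $w^*(a_0)\neq k/2$. If $w^*(a_0)>k/2$, the tail vertex $(a_1,\dots,a_{n-1})$ has pseudoweight $kn/2-w^*(a_0)<k(n-1)/2$. Appending any symbol $x$ with $w^*(x)<w^*(a_0)$, for example $x=1$, gives an outgoing edge of pseudoweight less than $kn/2$, which lies in $E_k(n-1)$. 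If $w^*(a_0)<k/2$, the symmetric argument on the last symbol works: choose a shift whose final symbol has pseudoweight greater than $k/2$, which exists because the average pseudoweight is exactly $k/2$ and the tuple is not all of pseudoweight $k/2$. Then replace that final symbol by $1$ to obtain an edge of $E_k(n-1)$ entering the head vertex. Since degrees are balanced and the graph is connected, it is Eulerian.

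Part (ii) is a case analysis on pairs $\mathbf{a},\mathbf{b}\in X_k(n-1)$, where we must rule out $\mathbf{a}=-\mathbf{b}^R$. Recall that $w^*(-\mathbf{b}^R)=kn-w^*(\mathbf{b})$. If either tuple lies in $E_k(n-1)$, the pseudoweights force the other to exceed $kn/2$, which is impossible in $X_k(n-1)$. If both lie in the same added necklace, that necklace would be negasymmetric, contrary to the choice. If they lie in different added necklaces $C\neq C'$, then $\mathbf{a}=-\mathbf{b}^R$ forces $C$ to be the image of $C'$ under the involution $I$, which is exactly the reverse-complementary partner of $C'$. Only one member of each pair was chosen, so this is impossible.

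I expect the main obstacle to be the connectivity step in (i), specifically the careful handling of the case in which all symbols have pseudoweight $k/2$ (for even $k$, mixtures of $0$ and $k/2$). Such tuples have every cyclic shift with $w^*=k/2$ at each position. I will need to check that these are negasymmetric, or else attach them via a vertex at pseudoweight $k(n-1)/2$ by appending the symbol $1$.
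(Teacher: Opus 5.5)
Your route is the paper's. Part (ii) and the degree-balance half of (i) are fine. The gap is in the connectivity step.

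As written, that step relies on the claim that every added tuple has a symbol of pseudoweight greater than $k/2$. You only showed the tuple is non-constant, and for even $k$ the claim is false. Take $k=4$, $n=6$ and the tuple $(0,0,2,0,2,2)$. Its pseudoweight is $12=kn/2$, and negation modulo $4$ fixes both $0$ and $2$, so $-\mathbf{a}^R=\mathbf{a}^R$. The binary necklace $001011$ is not a rotation of its reversal $110100$. Hence $[002022]$ is a non-negasymmetric necklace in $\mathcal{C}_4(5)$ (its partner is $[002202]$), and every one of its symbols has pseudoweight exactly $k/2$. So the first remedy you mention, showing such necklaces are negasymmetric, cannot work.

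Your second remedy, appending $1$ at a vertex of pseudoweight $k(n-1)/2$, does work, and it removes the need for any case split. The relevant threshold is $w^*(a_0)>w^*(1)=1$, not $w^*(a_0)>k/2$.
\begin{itemize}
\item Since $k\geq3$, every symbol other than $1$ has pseudoweight at least $3/2$: $w^*(0)=k/2$, and $w^*(a)=a\geq2$ otherwise.
\item A tuple of pseudoweight $kn/2>n$ cannot consist entirely of $1$s.
\item Rotate a symbol $a_0\neq1$ to the front. The tail vertex $(a_1,\dots,a_{n-1})$ then has pseudoweight at most $(kn-3)/2$.
\item Appending $1$ gives an edge of pseudoweight at most $(kn-1)/2<kn/2$, i.e.\ an edge of $E_k(n-1)$ through a vertex of the necklace.
\end{itemize}
This is exactly the paper's argument, and it covers all added necklaces, including the $\{0,k/2\}$ ones.

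A minor slip: in your $w^*(a_0)<k/2$ case, replacing the last symbol by $1$ gives an edge \emph{leaving} the tail vertex $(a_0,\dots,a_{n-2})$, not one entering the head vertex. It still shares a vertex with the necklace, so that case was harmless.
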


\begin{proof}
\begin{itemize}
\item[(i)] Since the subgraph of $B_k(n-1)$ with edge set $E_k(n-1)$ is Eulerian (as
    established in \cite[Section 3.3]{Mitchell25a}) and we are adjoining complete necklaces, it
    is clear that the subgraph of $B_k(n-1)$ with edge set $X_k(n-1)$ has in-degree equal to
    out-degree for every vertex. It remains to show that it is connected. We know that the
    subgraph with edge-set $E_k(n-1)$ is connected, and hence it remains to show that every
    added necklace from $H_k(n-1)$ has a vertex in common with one of the edges in $E_k(n-1)$.
    Since $E_k(n-1)$ contains all edges of pseudoweight less than $kn/2$ and $k>2$, all
    vertices in $B_k(n-1)$ with pseudoweight at most $(kn-3)/2$ have at least one outgoing (and
    incoming) edge in $E_k(n-1)$ --- this follows since if $(v_0,v_1,\dots,v_{n-2})$ is such a
    vertex, then $(v_0,v_1,\dots,v_{n-2},1)$ is an outgoing edge in $B_k(n-1)$ of pseudoweight
    at most $(kn-3)/2+1<kn/2$, i.e.\ it is an edge in $E_k(n-1)$. Every added necklace contains
    only edges of pseudoweight $kn/2$, and moreover cannot be the all-zero or all-$kn/2$
    necklace, since they are negasymmetric. That is an added necklace must contain at least one
    element not equal to 1, i.e.\ such a necklace will pass through a vertex with pseudoweight
    less than $kn/2-1$, i.e.\ a vertex with an outgoing edge in $E_k(n-1)$.
\item[(ii)] $E_k(n-1)$ is negasymmetric, as are all the added necklaces.  If $\mathbf{a}$ is an
    edge in an added necklace it must have pseudoweight $kn/2$, and hence $-\mathbf{a}^R$ must
    also have pseudoweight $kn/2$; hence $-\mathbf{a}^R$ is not in $E_k(n-1)$.  Finally,
    suppose that $\mathbf{a}$ and $\mathbf{b}$ are edges in distinct added necklaces.  Then
    $-\mathbf{a}^R$ is in the reverse-complementary necklace to the one containing
    $\mathbf{a}$, and hence $-\mathbf{a}^R\neq\mathbf{b}$.
\end{itemize}
\end{proof}

\begin{example} Reviewing Examples~\ref{example:n6k3} and \ref{example:revisit}, in
the case $n=6$ and $k=3$ the set $X_3(5)$ will contain $E_3(5)$ and four of the eight
non-negasymmetric necklaces in $\mathcal{C}_3(5)$, one of each reverse-complementary pair.  That is
$|X_3(5)|=|E_3(5)|+24=318$, yielding a $\mathcal{NOS}_3(6)$ of period 318.
\end{example}

It therefore remains to determine the size of the set $X_k(n-1)$, as defined in
Theorem~\ref{theorem:Xkn-1_is_Eulerian}, and this is the main focus of the remainder
of the paper.

\section{Enumerating non-negasymmetric necklaces}  \label{section:circuit_enumeration}  \label{section:circuit_enumeration_kodd}

\subsection{Preliminaries}  \label{subsection:preliminaries}

For the reason given in Remark~\ref{remark:n_even_counterexample}, we divide the analysis into two
cases, depending whether $n$ is odd or even. However, we first give a key definition and a lemma
which apply to all $n$.

\begin{definition}
Suppose $k\geq3$ and $n\geq3$. For $i\geq0$, let $N_i(n,k)$ represent the set of negasymmetric
necklaces in $\mathcal{C}_k(n-1)$ containing $i$ negasymmetric $n$-tuples.
\end{definition}

\begin{remark}  \label{remark:Nink at most 2}
If $n$ is odd, it follows immediately from
Lemma~\ref{lemma:nega_ciruit_means_nega_tuple_odd_k_odd_c} that $|N_i(n,k)|=0$ if $i\neq 1$. Also,
from Corollary~\ref{corollary:two_negatuples_means_n_even}, if $n$ is even then a negasymmetric
necklace $[a_0,a_1,\ldots,a_{n-1}]$ contains zero, one or two negasymmetric $n$-tuples, i.e.\
$|N_i(n,k)|=0$ if $i>2$.
\end{remark}

The following result is elementary.

\begin{lemma}\label{lemma:bound on w n evenx}
Suppose $k\geq3$ and $n\geq3$.  Let $n=2^tm$, where $t\geq 0$ and $m$ is odd.  Then the number of
$n$-tuples (edges) in $H_k(n-1)$ that are not in a negasymmetric necklace in $\mathcal{C}_k(n-1)$,
and hence are in a non-negasymmetric necklace in $\mathcal{C}_k(n-1)$, is at least
\[ r_{k,n,kn/2} - n(|N_0(n,k)|+|N_2(n,k)|)-m(|N_1(n,k)|-\delta)-\delta \]
where $\delta=1$ or 2 depending on whether $k$ is odd or even, with equality if $n$ is prime.
\end{lemma}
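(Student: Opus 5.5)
The plan is to count the edges of $H_k(n-1)$ that lie in negasymmetric necklaces and subtract this from the total $r_{k,n,kn/2}$. By Lemma~\ref{lemma:defining_circuits}(ii), the necklaces in $\mathcal{C}_k(n-1)$ partition the edges of $H_k(n-1)$, and a necklace of period $c$ contributes exactly $c$ edges. So it suffices to show that the total period of the negasymmetric necklaces is at most $n(|N_0(n,k)|+|N_2(n,k)|)+m(|N_1(n,k)|-\delta)+\delta$. By Remark~\ref{remark:Nink at most 2}, every negasymmetric necklace lies in $N_0$, $N_1$ or $N_2$. Every period divides $n$, so each necklace in $N_0\cup N_2$ contributes at most $n$ edges.

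The key step is bounding the periods of the necklaces in $N_1(n,k)$. Let $[a_0,\dots,a_{n-1}]$ have period $c$ and contain a negasymmetric $n$-tuple, which after rotation we take to be $(a_0,\dots,a_{n-1})$. I claim that $c$ must be odd. Suppose instead that $c$ is even. By Lemma~\ref{lemma:nega_for_c}(i), the $c$-tuple $(a_0,\dots,a_{c-1})$ is negasymmetric. Lemma~\ref{lemma:nega_moved_by_halfn_is_nega} (applied with $c$ in place of $n$) then shows that its rotation by $c/2$ is also negasymmetric. Applying Lemma~\ref{lemma:nega_for_c}(i) again, the $n$-tuple starting at $a_{c/2}$ is negasymmetric. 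This tuple differs from $(a_0,\dots,a_{n-1})$ because $0<c/2<c$, so the necklace contains two negasymmetric $n$-tuples, contradicting membership of $N_1$. Hence $c$ is odd, and since $c\mid n=2^tm$ we get $c\mid m$, so $c\le m$.

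Next I identify the necklaces of period $1$ in $N_1$. A constant necklace $[a,\dots,a]$ has pseudoweight $kn/2$ only if $w^*(a)=k/2$, that is, $a=0$ or ($k$ even and) $a=k/2$. Both choices satisfy $a=-a$, so these necklaces are negasymmetric and lie in $N_1$. There are exactly $\delta$ of them, each contributing a single edge. Each of the remaining $|N_1|-\delta$ necklaces in $N_1$ contributes at most $m$ edges. Summing over all negasymmetric necklaces and subtracting from $r_{k,n,kn/2}$ gives the stated lower bound.

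For equality when $n$ is prime, note that $n\ge 3$ forces $n$ to be odd, so $m=n$. By Remark~\ref{remark:Nink at most 2}, $N_0$ and $N_2$ are then empty. Every period divides $n$, so each period is $1$ or $n$. The period-$1$ necklaces in $H_k(n-1)$ are exactly the $\delta$ constant ones found above, so every other necklace in $N_1$ has period exactly $n=m$. Hence every inequality used is an equality.

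The main obstacle is the parity argument for $N_1$. One must check that Lemma~\ref{lemma:nega_moved_by_halfn_is_nega} applies to the $c$-tuple with $c$ in place of $n$. One must also check that the shifted $n$-tuple is genuinely distinct from the original, which holds because the necklace has period exactly $c$.
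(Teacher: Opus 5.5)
Your proposal is correct and uses the same counting argument as the paper. You bound the edges in negasymmetric necklaces by $n$ for each necklace in $N_0(n,k)\cup N_2(n,k)$, by $m$ for each non-constant necklace in $N_1(n,k)$, and by $1$ for each of the $\delta$ constant necklaces, and you get equality for prime $n$ because every non-constant necklace then has period $n$.

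The one difference is that you actually prove the necklaces in $N_1(n,k)$ have odd period, using Lemma~\ref{lemma:nega_for_c}(i) and a rotation by $c/2$, whereas the paper's proof simply asserts this. The paper's underlying Lemma~\ref{lemma:nega_ciruit_means_02_nega_tuples_odd_k_even_c} rotates by $n/2$, which returns the same tuple when $n/c$ is even, so your rotation by $c/2$ is the one that works in every case.
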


\begin{proof}
Every negasymmetric necklace in $\mathcal{C}_k(n-1)$ contains at most $n$ $n$-tuples.  However, the
necklaces in $N_1(n,k)$ have odd period, i.e.\ period at most $m$, and hence they contain at most
$m$ $n$-tuples. Moreover, one of the necklaces in $N_1(n,k)$ is the all-zero necklace containing a
single $n$-tuple; if $k$ is even, $N_1(n,k)$ also includes the all-$k/2$ necklace which again
contains a single $n$-tuple. That is, the set of all negasymmetric necklaces contains at most
\[ n|N_0(n,k)| + m(|N_1(n,k)|-\delta) + \delta + n|N_2(n,k)| \]
$n$-tuples.  Now $|H_k(n-1)|=r_{k,n,kn/2}$ and the inequality follows.

Finally, suppose $n$ is prime, and hence odd since $n\geq3$, and so $|N_0(n,k)|=|N_2(n,k)|=0$. The
necklaces in $N_1(n,k)$ must all have `full' period $n$, apart from the all-zero and the all-$k/2$
necklaces, and so all but $\delta$ of these necklaces contain $n$ distinct $n$-tuples. Thus in this
case equality holds.
\end{proof}

The corollary below follows immediately from Remark~\ref{remark:Nink at most 2}.

\begin{corollary}  \label{corollary:edge_bound_nodd}
Suppose $k\geq3$ and $n\geq3$ is odd.  Then the number of $n$-tuples (edges) in $H_k(n-1)$ that are
not in a negasymmetric necklace in $\mathcal{C}_k(n-1)$, and hence are in a non-negasymmetric
necklace in $\mathcal{C}_k(n-1)$, is at least
\[ r_{k,n,kn/2} - n(|N_1(n,k)|-\delta)-\delta \]
where $\delta=1$ or 2 depending on whether $k$ is odd or even, with equality if $n$ is prime.
\end{corollary}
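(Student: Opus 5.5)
We obtain the corollary by specialising Lemma~\ref{lemma:bound on w n evenx} to odd $n$. When $n$ is odd, the factorisation $n=2^tm$ with $m$ odd has $t=0$ and $m=n$.

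\textbf{Step 1: the classes $N_0(n,k)$ and $N_2(n,k)$ are empty.} By Remark~\ref{remark:Nink at most 2}, which rests on Lemma~\ref{lemma:nega_ciruit_means_nega_tuple_odd_k_odd_c}, every negasymmetric necklace contains exactly one negasymmetric $n$-tuple when $n$ is odd. The reason is that the period $c$ of such a necklace divides $n$, and is therefore odd. Hence $|N_0(n,k)|=|N_2(n,k)|=0$.

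\textbf{Step 2: substitute into the bound.} Putting these values into the bound of Lemma~\ref{lemma:bound on w n evenx} gives
\[ r_{k,n,kn/2}-n\cdot 0-n(|N_1(n,k)|-\delta)-\delta = r_{k,n,kn/2}-n(|N_1(n,k)|-\delta)-\delta, \]
which is exactly the claimed expression.

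\textbf{Step 3: the equality case.} The statement that equality holds when $n$ is prime is inherited directly from the lemma, since a prime $n\geq3$ is odd.

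\textbf{Step 4: the value of $\delta$.} We should confirm that $\delta$ correctly counts the period-1 necklaces in $N_1(n,k)$. These are the all-zero necklace and, when $k$ is even, the all-$k/2$ necklace. Both lie in $H_k(n-1)$ and are negasymmetric. This was already used in the proof of the lemma, so it needs only a citation.

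The argument is a direct substitution, so no step presents a real obstacle. The only point to state explicitly is Step 1, namely that odd $n$ forces every divisor period to be odd, so that Lemma~\ref{lemma:nega_ciruit_means_nega_tuple_odd_k_odd_c} applies to every negasymmetric necklace.
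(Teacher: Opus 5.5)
Your proposal is correct and is essentially the paper's argument. The paper likewise derives the corollary immediately from Remark~\ref{remark:Nink at most 2}: for odd $n$ it has $|N_0(n,k)|=|N_2(n,k)|=0$ and $m=n$, which it substitutes into the bound of Lemma~\ref{lemma:bound on w n evenx}, with the equality case for prime $n$ carried over from that lemma.
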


It thus remains to evaluate $|N_i(n,k)|$.

\subsection{The \texorpdfstring{$n$}{n} odd case}  \label{section:circuit_enumeration_kodd_nodd}  \label{section:circuit_enumeration_nodd}

We first examine the $n$ odd case, the much simpler of the two. We have the following
result.

\begin{lemma} \label{lemma:N1_for_n_odd}
Suppose $k\geq3$ and $n\geq3$, where $n$ is odd. Then:
\[ |N_1(n,k)| = \delta k^{(n-1)/2}.\]
\end{lemma}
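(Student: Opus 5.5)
The proof sets up a bijection between the necklaces in $N_1(n,k)$ and the negasymmetric $n$-tuples, and then counts the latter with Lemma~\ref{lemma:number_of_nega_tuples}.

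Since $n$ is odd, every divisor $c$ of $n$ is odd. By Lemma~\ref{lemma:nega_ciruit_means_nega_tuple_odd_k_odd_c}, every negasymmetric necklace in $\mathcal{C}_k(n-1)$ contains exactly one negasymmetric $n$-tuple. Hence (Remark~\ref{remark:Nink at most 2}) the negasymmetric necklaces are exactly the members of $N_1(n,k)$.

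Define the map $\phi$ from the set of negasymmetric $n$-tuples to $N_1(n,k)$ by sending a tuple $\mathbf{a}$ to the necklace containing it. This map is well defined, for two reasons:
\begin{itemize}
\item A negasymmetric $n$-tuple $\mathbf{a}$ has pseudoweight $kn/2$, because negation sends pseudoweight $s$ to $kn-s$, reversal preserves pseudoweight, and $\mathbf{a}=-\mathbf{a}^R$ forces $s=kn-s$. So $\mathbf{a}$ is an edge of $H_k(n-1)$.
\item By Lemma~\ref{lemma:defining_circuits}, $\mathbf{a}$ lies in a unique necklace of $\mathcal{C}_k(n-1)$, and that necklace is negasymmetric because it contains $\mathbf{a}$ (take both edges in the definition equal to $\mathbf{a}$).
\end{itemize}

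The map $\phi$ is surjective by the existence half of Lemma~\ref{lemma:nega_ciruit_means_nega_tuple_odd_k_odd_c}. It is injective because each necklace in $N_1(n,k)$ contains only one negasymmetric $n$-tuple. Therefore $|N_1(n,k)|$ equals the number of negasymmetric $n$-tuples.

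By Lemma~\ref{lemma:number_of_nega_tuples}, with $n$ odd so that $\lfloor n/2\rfloor=(n-1)/2$, this number is $k^{(n-1)/2}$ for $k$ odd and $2k^{(n-1)/2}$ for $k$ even. Both cases read $\delta k^{(n-1)/2}$, as required.

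There is no real obstacle here. The only point needing care is that the uniqueness in Lemma~\ref{lemma:nega_ciruit_means_nega_tuple_odd_k_odd_c}, which comes from Corollary~\ref{corollary:two_negatuples_means_n_even}, requires the period $c$ to be odd. That holds automatically since $c\mid n$ and $n$ is odd.
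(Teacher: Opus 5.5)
Your proof is correct and takes the same route as the paper's. Since $n$ is odd, every necklace period is odd, so Lemma~\ref{lemma:nega_ciruit_means_nega_tuple_odd_k_odd_c} identifies $N_1(n,k)$ bijectively with the negasymmetric $n$-tuples, and Lemma~\ref{lemma:number_of_nega_tuples} counts these; you just spell out the well-definedness details (pseudoweight $kn/2$, unique containing necklace) that the paper leaves implicit.
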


\begin{proof}
Since $n$ is odd, by Lemma~\ref{lemma:nega_ciruit_means_nega_tuple_odd_k_odd_c} every negasymmetric
necklace in $\mathcal{C}_k(n-1)$ contains a unique negasymmetric $n$-tuple, and so $|N_1(n,k)|$ is
equal to the number of negasymmetric $n$-tuples, and the result follows from
Lemma~\ref{lemma:number_of_nega_tuples}.
\end{proof}

\subsection{The \texorpdfstring{$n$}{n} even case}  \label{section:circuit_enumeration_kodd_neven}

As noted above, if $[a_0,a_1,\dots,a_{n-1}]$ is a negasymmetric necklace of even period in
$\mathcal{C}_k(n-1)$ then the claim of Lemma~\ref{lemma:nega_ciruit_means_nega_tuple_odd_k_odd_c}
does not hold, meaning that it is not so simple to enumerate the negasymmetric necklaces in
$\mathcal{C}_k(n-1)$ when $n$ is even. To obtain a result corresponding to
Lemma~\ref{lemma:N1_for_n_odd} we need to analyse further the behaviour of necklaces in
$\mathcal{C}_k(n-1)$ when $n$ is even.

We first characterise negasymmetric necklaces in $\mathcal{C}_k(n-1)$ with even period that contain
at least one negasymmetric $n$-tuple.  We have the following result, analogous to
Lemma~\ref{lemma:nega_ciruit_means_nega_tuple_odd_k_odd_c}.

\begin{lemma}  \label{lemma:nega_ciruit_means_02_nega_tuples_odd_k_even_c}
Suppose $k\geq3$ and $n\geq 4$ is even.  Suppose $[a_0,a_1,\dots,a_{n-1}]$ is a negasymmetric
necklace in $\mathcal{C}_k(n-1)$ with even period $c$ containing at least one negasymmetric
$n$-tuple.  Then $[a_0,a_1,\dots,a_{n-1}]$ contains precisely two negasymmetric $n$-tuples at
distance $c/2$ apart.
\end{lemma}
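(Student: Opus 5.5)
Suppose the necklace contains a negasymmetric $n$-tuple. Rotating the labelling, we may take it to be $(a_0,a_1,\dots,a_{n-1})$ itself, so that $a_{\overline{i}}=-a_{\overline{-1-i}}$ for every $i$. The plan is to exhibit a second negasymmetric tuple at distance $c/2$ and then use Corollary~\ref{corollary:two_negatuples_means_n_even} to rule out any further ones.

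For existence, I would consider the rotation $(a_{\overline{c/2}},a_{\overline{c/2+1}},\dots,a_{\overline{c/2-1}})$. To show it is negasymmetric we need
\[ a_{\overline{c/2+j}}=-a_{\overline{c/2-1-j}} \]
for all $j$. Applying the negasymmetry of the first tuple with $i=c/2+j$ gives
\[ a_{\overline{c/2+j}}=-a_{\overline{-1-c/2-j}}. \]
Since the necklace has period $c$, indices may be shifted by $c$, so
\[ a_{\overline{-1-c/2-j}}=a_{\overline{c/2-1-j}}. \]
This is exactly the required identity, so the shifted tuple is negasymmetric. This is the analogue of the $n$ even case of Lemma~\ref{lemma:nega_moved_by_halfn_is_nega}, but with $c$ in place of $n$. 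Because $c$ is the least period and $0<c/2<c$, this tuple is distinct from $(a_0,\dots,a_{n-1})$. The necklace therefore contains at least two distinct negasymmetric $n$-tuples, at distance $c/2$ apart.

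For uniqueness, suppose there were a third negasymmetric tuple. Corollary~\ref{corollary:two_negatuples_means_n_even} says that any two distinct negasymmetric tuples in a necklace of period $c$ lie at distance exactly $c/2$, with distance measured cyclically modulo $c$. The three positions would then be pairwise at cyclic distance $c/2$ in $\mathbb{Z}_c$. That is impossible: the only element at distance $c/2$ from position $0$ is position $c/2$, so the third position would have to coincide with one of the first two. Hence there are exactly two negasymmetric tuples.

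The main obstacle is bookkeeping. The distance in Lemma~\ref{lemma:multiple_negasymmetric_tuples} is stated with $t\le n/2$ on $n$ positions, whereas the necklace has only $c$ distinct edges, so distances must be read modulo $c$. I would state explicitly that the distinct edges correspond to residues modulo $c$. The hypothesis that $c$ is even is used only to make $c/2$ an integer.
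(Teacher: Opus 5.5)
Your proof is correct, and it differs from the paper's at the one point that matters. The paper gets the second negasymmetric tuple by shifting the first by $n/2$, in the spirit of Lemma~\ref{lemma:nega_moved_by_halfn_is_nega}, and then appeals to Corollary~\ref{corollary:two_negatuples_means_n_even}. Since $n/2=(n/c)\cdot(c/2)$, that shift is congruent to $c/2$ modulo $c$ only when $n/c$ is odd. When $n/c$ is even, $c\mid n/2$ and the shifted tuple is the original one. In that case the paper has not actually produced a second distinct tuple. For example, with $k=3$ and $n=4$, the necklace $[1212]$ has period $2$: the shift by $n/2=2$ returns $(1212)$, and only your shift by $c/2=1$ finds $(2121)$.

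You derive negasymmetry of the $c/2$-shift directly from the relation $a_{\overline{i}}=-a_{\overline{-1-i}}$ together with $c$-periodicity. This works uniformly, whatever the parity of $n/c$. Your use of the minimality of $c$ to get distinctness, and of the Corollary only to rule out a third tuple, is exactly what is needed. In effect you apply the even case of Lemma~\ref{lemma:nega_moved_by_halfn_is_nega} at the level of the period $c$ (equivalently, to the $c$-tuple via Lemma~\ref{lemma:nega_for_c}) rather than at the level of $n$. That is what makes your argument complete where the paper's is not.
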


\begin{proof}
Suppose $(a_{\overline{s}},a_{\overline{s+1}},\dots,a_{\overline{s-1}})$ is negasymmetric, where
$\overline{x}=x\bmod n$. Then, if $m=n/2$, it follows immediately that
$(a_{\overline{s+m}},a_{\overline{s+m+1}},\dots,a_{\overline{s+m-1}})$ is also negasymmetric, since
$\overline{s+m+i}=\overline{s-m+i}$ for every $i$.  The
result then follows immediately from Corollary~\ref{corollary:two_negatuples_means_n_even}.
\end{proof}

We also have the following result covering the case where a negasymmetric necklace does not contain
any negasymmetric $n$-tuples.

\begin{lemma}  \label{lemma:nega_circuit_no_nega_tuples}
Suppose $k\geq3$, $n\geq 3$, and $[a_0,a_1,\dots,a_{n-1}]$ is a negasymmetric necklace in
$\mathcal{C}_k(n-1)$ containing no negasymmetric $n$-tuples.  Then $n$ is even,
$[a_0,a_1,\dots,a_{n-1}]$ has  period $c$ for some even $c\mid n$, and, for some $s$ ($0\leq s<
c/2$):
\begin{itemize}
\item[(i)] $a_{s}=0$ or $k/2$ and $a_{s+c/2}=0$ or $k/2$;
\item[(ii)] both
\[ (a_{s+1},a_{s+2},\dots,a_{c-1},a_0,a_1,\dots,a_{s-1}) \]
and
\[ (a_{s+c/2+1},a_{s+c/2+2},\dots,a_{c-1},a_0,a_1,\dots,a_{s+c/2-1}) \]
are negasymmetric $(c-1)$-tuples;
\item[(iii)] $[a_0,a_1,\dots,a_{c-1}]$ contains no other negasymmetric
    $(c-1)$-tuples;~moreover, the
    $(n-1)$-tuples of (ii) are distinct;
\item[(iv)]
\[ (a_{s+1},a_{s+2},\dots,a_{n-1},a_0,a_1,\dots,a_{s-1}) \]
and
\[ (a_{s+c/2+1},a_{s+c/2+2},\dots,a_{n-1},a_0,a_1,\dots,a_{s+c/2-1}) \]
are distinct negasymmetric $(n-1)$-tuples,~and
 $[a_0,a_1,\dots,a_{n-1}]$ contains no other negasymmetric $(n-1)$-tuples;~also, if~
 $a_s=a_{s+c/2}$
\[ (a_{s+n/2+1},a_{s+n/2+2},\dots,a_{n-1},a_0,a_1,\dots,a_{s+n/2-1}) \]
 equals either
 \[ (a_{s+1},a_{s+2},\dots,a_{n-1},a_0,a_1,\dots,a_{s-1}) \]
or
\[ (a_{s+c/2+1},a_{s+c/2+2},\dots,a_{n-1},a_0,a_1,\dots,a_{s+c/2-1}) \]
according as $n/c$ is even or odd.
\end{itemize}
\end{lemma}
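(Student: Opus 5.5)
The plan is to work with the index relation that defines negasymmetry of a necklace. Since the necklace is negasymmetric, there is some $t$ with $a_{\overline{i}}=-a_{\overline{t-i}}$ for all $i$. Because the necklace has period $c$, indices may be read modulo $c$, so $t$ can be taken in $\{0,\dots,c-1\}$.

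First, parity. By Lemma~\ref{lemma:nega_ciruit_means_nega_tuple_odd_k_odd_c}, if $c$ were odd the necklace would contain a negasymmetric $n$-tuple. Hence $c$ is even, and since $c\mid n$, so is $n$. Next, by the argument in Remark~\ref{remark:n_even_counterexample}, if $t$ were odd then $s=(t+1)/2$ would give a negasymmetric $n$-tuple. So $t$ is even. The relation $i\mapsto t-i$ (mod $c$) then has fixed points $i$ with $2i\equiv t \pmod c$, namely $i=t/2$ and $i=t/2+c/2$. Let $s$ be the one lying in $[0,c/2)$. At a fixed point, $a_s=-a_s$, so $a_s\in\{0,k/2\}$, and likewise $a_{s+c/2}\in\{0,k/2\}$; this gives (i).

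For (ii), substitute $i=s+j$ to get $a_{s+j}=-a_{s-j}$ (mod $c$). Starting the word at $s+1$ and running cyclically to $s-1$ gives a $(c-1)$-tuple whose $j$-th entry is $a_{s+1+j}$. Its mirror entry is $a_{s-1-j}=-a_{s+1+j}$, so the tuple is negasymmetric. The same computation centred at $s+c/2$ handles the second tuple, using $a_{s+c/2+j}=-a_{s+c/2-j}$, which follows from $t=2s\equiv 2s+c \pmod c$. For (iv) the argument is identical with indices read modulo $n$, using $c\mid n$, so the relation persists over length $n-1$.

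For (iii) there are two tasks.
\begin{itemize}
\item \emph{Uniqueness.} Any negasymmetric $(c-1)$-tuple starting at $u+1$ gives a reflection $i\mapsto 2u-i$ mod $c$ fixing the necklace. I will show, as in Lemma~\ref{lemma:multiple_negasymmetric_tuples}, that two reflections centred at $u$ and $s$ combine to give the translation $a_i=a_{i+2(u-s)}$. Since the period is $c$, this forces $2u\equiv 2s \pmod c$, so $u\in\{s,s+c/2\}$ mod $c$.
\item \emph{Distinctness.} If the two tuples in (ii) were equal, shifting by $c/2$ would fix the cyclic word with one position removed. I expect this to force a period dividing $c/2$, using that $a_s$ and $a_{s+c/2}$ are the only omitted entries. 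One would first need $a_s=a_{s+c/2}$, and then the whole word would be invariant under a shift of $c/2$, a contradiction.
\end{itemize}
This distinctness step is where I expect the main care to be needed, because equality of the two omitted-entry words does not immediately say anything about the omitted entries themselves. I would compare the tuples entrywise across the gap to derive $a_{i}=a_{i+c/2}$ for all $i\ne s,s+c/2$. Then I would use the reflection relation to get $a_s=a_{s+c/2}$ as well, or else handle the case $a_s\neq a_{s+c/2}$ directly.

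The final clause of (iv) is a direct index computation. The $(n-1)$-tuple centred at $s+n/2$ coincides with the one centred at $s$ or at $s+c/2$ according as $n/2\equiv 0$ or $c/2 \pmod c$, that is, according as $n/c$ is even or odd. When the removed entries agree ($a_s=a_{s+c/2}$), the tuples are literally equal by periodicity.

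Finally, the claim that there are no other negasymmetric $(n-1)$-tuples reduces to (iii) by the same argument as in Lemma~\ref{lemma:nega_for_c}. An $(n-1)$-tuple centred at $u$ is negasymmetric iff the reflection about $u$ fixes the period-$c$ word, and this condition only depends on $u$ mod $c$.
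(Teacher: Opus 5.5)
Your route is essentially the paper's. Odd period is excluded by Lemma~\ref{lemma:nega_ciruit_means_nega_tuple_odd_k_odd_c}, $t$ is forced even as in Remark~\ref{remark:n_even_counterexample}, you take $s=t/2$, and uniqueness in (iii) comes from composing two reflections into a translation. Using the second fixed point $s+c/2$ directly for (i) and (ii) is a harmless shortcut for the paper's appeal to Lemmas~\ref{lemma:nega_for_c}(i) and~\ref{lemma:nega_moved_by_halfn_is_nega}.

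The genuine gap is the distinctness claim in (iii), which you leave unresolved and misdiagnose. Equality of the two words does say something about the omitted entries, because each word contains the entry the other omits, in the same position $c/2-1$. Comparing entry $j$ gives $a_{s+1+j}=a_{s+c/2+1+j}$ for $0\le j\le c-2$, i.e.\ $a_i=a_{i+c/2}$ for every $i\not\equiv s \pmod c$. This range includes $i=s+c/2$ (the case $j=c/2-1$), which gives $a_{s+c/2}=a_{s+c}=a_s$. Hence $a_i=a_{i+c/2}$ for all $i$, the period divides $c/2$, and you have a contradiction; this is exactly the paper's ``set $i=c/2$'' step.

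Your stated fallback, obtaining $a_s=a_{s+c/2}$ from the reflection relation, would fail for even $k$. The reflection only places each of $a_s$ and $a_{s+c/2}$ in $\{0,k/2\}$ independently, and they can differ: the necklace $[0002]$ of Example~\ref{example:n4k4} has $t=2$, $s=1$, $a_1=0$, $a_3=2$. Once this is repaired, the distinctness of the two $(n-1)$-tuples in (iv), which you do not address, is immediate, since their first $c-1$ entries are precisely the two $(c-1)$-tuples of (ii).

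A smaller point concerns the uniqueness argument. A negasymmetric $(c-1)$-tuple starting at $u+1$ only gives $a_{u+m}=-a_{u-m}$ for $m\not\equiv 0 \pmod c$. Your claim that the reflection about $u$ fixes the whole word therefore also needs $a_u=-a_u$. The paper gets this from pseudoweight: the $(c-1)$-tuple has pseudoweight $(c-1)k/2$ and $[a_0,\dots,a_{c-1}]$ lies in $H_k(c-1)$, so $w^*(a_u)=k/2$ and $a_u\in\{0,k/2\}$. Alternatively, without it the translation identity $a_i=a_{i+2(u-s)}$ fails at most at one index. A single missing link still propagates around each orbit of the translation, so your conclusion $u\in\{s,s+c/2\}$ stands either way.
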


\begin{proof}
The fact $c$ is even follows immediately from
    Lemma~\ref{lemma:nega_ciruit_means_nega_tuple_odd_k_odd_c}, and since $c\mid n$ it follows that
    $n$ is also even. By Lemma~\ref{lemma:nega_for_c}, $[a_0,a_1,\dots,a_{c-1}]$ is a
    negasymmetric necklace in $B_k(c-1)$ which contains no negasymmetric $c$-tuples. Since
    $[a_0,a_1,\dots,a_{c-1}]$ is negasymmetric, by definition $a_{i}=-a_{\overline{t-i}}$ for
    some $t$ ($0\leq t\leq c-1$) and every $i$ ($0\leq i\leq c-1$), where the bar indicates
    computation modulo $c$.

By Remark~\ref{remark:n_even_counterexample}, we know $t$ is even, or else there would exist a
    negasymmetric $c$-tuple in the necklace. Hence $a_{t/2}=-a_{t/2}$, i.e.\ $a_{t/2}=0$ or $k/2$.  If we
    set $s=t/2\bmod c$, then $a_{s}=a_{t/2}=0$ or $k/2$, since $[a_0,a_1,\dots,a_{n-1}]$ has period $c$.
    Moreover $0\leq s<c/2$ by definition of $s$, establishing part of (i).

It also follows immediately that $a_{s+i}=-a_{\overline{s-i}}$ for every $i$ ($0\leq i\leq
    c-1$), where $\overline{x}=x\bmod c$.  Thus
    $(a_{s+1},a_{s+2},\dots,a_{c-1},a_0,a_1,\dots,a_{s-1})$ is a negasymmetric $(c-1)$-tuple,
    establishing the first part of (ii).

But by Lemma~\ref{lemma:nega_for_c}(i), this means that $a_{s+c/2}=0$ or $k/2$, completing the
proof of (i). By Lemma~\ref{lemma:nega_moved_by_halfn_is_nega}, it follows that
\[ (a_{s+c/2+1},a_{s+c/2+2},\dots,a_{c-1},a_0,a_1,\dots,a_{s+c/2-1}) \]
is negasymmetric and (ii) follows.

Suppose the first part of (iii) is not true, and without loss of generality relabelling subscripts
as necessary, suppose
\[ (a_0,a_1,\dots,a_{c-2}) \text{~~and~~}
(a_{\overline{t}},a_{\overline{t+1}},\dots,a_{\overline{t-2}}) \]
are both negasymmetric
$(c-1)$-tuples where $0<t<c/2$ and $\overline{x}=x\bmod c$.  Thus
\[ a_i=-a_{c-2-i} \text{~~and~~} a_{\overline{t+i}}=-a_{\overline{t-2-i}} \]
for every $i$ ($0\leq i<c-1$).

Also, if $(a_0,a_1,\dots,a_{c-2})$ is negasymmetric then it must have pseudoweight
 $(c-1)k/2$, and thus $a_{c-1}$ has pseudoweight
$k/2$, and hence $a_{c-1}=0$ or $k/2$. Similarly we have $a_{t-1}=0$ or $k/2$. Hence the equations
above also hold for $i=c-1$.

Setting $i=t+j$ in the first equation gives $a_{\overline{t+j}}=-a_{\overline{-2-t-j}}$ for every
$j$ ($0\leq j<c$), and thus $a_{\overline{-t-2-i}}=a_{\overline{t-2-i}}$ for every $i$ ($0\leq
i<c$). Hence $[a_0,a_1,\dots,a_{c-1}]$ has period dividing $2t<c$, which is a contradiction.

The second part of (iii) follows because if the two $(c-1)$-tuples are equal then
$a_{\overline{s+i}}=a_{\overline{s+c/2+i}}$ for every $i$ ($1\leq i\leq c-1$), where the bar
denotes reduction modulo $c$. Hence, setting $i=c/2$, we have
$a_{\overline{s+c/2}}=a_{\overline{s+c}}=a_{\overline{s}}$, and so
$a_{\overline{s+i}}=a_{\overline{s+c/2+i}}$ for every $i$ ($0\leq i\leq c-1$). Thus
$[a_0,a_1,\dots,a_{n-1}]$ has period dividing $c/2$, giving the desired contradiction.

Now consider (iv). That these $(n-1)$-tuples are negasymmetric follows immediately, because
$(a_0,a_1,\dots,a_{n-1})$ is a concatenation of $n/c$ copies of $(a_0,a_1,\dots,a_{c-1})$.
Moreover, another negasymmetric $(n-1)$-tuple contained in $[a_0,a_1,\dots,a_{n-1}]$ would imply
another negasymmetric $(c-1)$-tuple contained in $[a_0,a_1,\dots,a_{c-1}]$. That they
are distinct follows from exactly the same argument used to show the $(c-1)$-tuples of (ii) are
distinct, except this time the subscripts are computed modulo $n$.

The statement regarding
\[ (a_{s+n/2+1},a_{s+n/2+2},\dots,a_{n-1},a_0,a_1,\dots,a_{s+n/2-1}) \]
is immediate by the periodicity of $[a_0,a_1,\dots,a_{n-1}]$.
\end{proof}

\begin{remark}
An example of the case where a negasymmetric necklace does not contain any negasymmetric $n$-tuples
for $n=4$ and $k=3$ is given in Remark~\ref{remark:n_even_counterexample}. Examples of cases where
a negasymmetric necklace contains two negasymmetric $n$-tuples are provided by
Examples~\ref{example:n4k3} and \ref{example:n6k3}.
\end{remark}

In addition we need the following simple result covering the case where a negasymmetric necklace
$[a_0,a_1,\dots,a_{n-1}]$ contains precisely one negasymmetric $n$-tuple

\begin{lemma}  \label{lemma:exactly_one_nega_n-1-tuple_when_odd_period}
Suppose $k\geq 3$, $n\geq 4$ is even, and $[a_0,a_1,\dots,a_{n-1}]$ is a negasymmetric necklace in
$\mathcal{C}_k(n-1)$ of odd period.  Then $[a_0,a_1,\dots,a_{n-1}]$ contains precisely one
negasymmetric $(n-1)$-tuple.
\end{lemma}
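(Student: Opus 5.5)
Let the necklace have odd period $c$; then $c\mid n$, and since $n$ is even, $n/c$ is even. By Lemma~\ref{lemma:nega_ciruit_means_nega_tuple_odd_k_odd_c} the necklace contains exactly one negasymmetric $n$-tuple. After relabelling we may take it to be $(a_0,a_1,\dots,a_{n-1})$, so $a_i=-a_{\overline{-1-i}}$ for all $i$, with indices taken modulo $n$ or, by periodicity, modulo $c$.

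The $(n-1)$-tuples in the necklace are the vertices $(a_{\overline{s}},\dots,a_{\overline{s+n-2}})$, and the one starting at $s$ omits $a_{\overline{s-1}}$. For existence, note that $c$ is odd, so $2$ is invertible modulo $c$. We look for $s$ with $a_{\overline{s+i}}=-a_{\overline{s+n-2-i}}$ for all $0\le i\le n-2$. Reducing modulo $c$ and using $n\equiv 0$, this becomes $a_{s+i}=-a_{s-2-i}$. From the known symmetry $a_j=-a_{-1-j}$ we would need $-1-(s+i)\equiv s-2-i \pmod c$, that is, $2s\equiv 1\pmod c$. So we take $s=(c+1)/2$, and then $(a_{\overline{s}},\dots,a_{\overline{s+n-2}})$ is a negasymmetric $(n-1)$-tuple.

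For uniqueness, suppose the $(n-1)$-tuples starting at $s$ and $s'$ are both negasymmetric and distinct. The argument follows the proof of Lemma~\ref{lemma:multiple_negasymmetric_tuples}, in the form used for part (iii) of Lemma~\ref{lemma:nega_circuit_no_nega_tuples}.
\begin{itemize}
\item Each such tuple has pseudoweight $(n-1)k/2$, so the omitted symbol has pseudoweight $k/2$ and is therefore $0$ or $k/2$. Hence the defining relation also holds at the omitted position, giving $a_j=-a_{\overline{2s-2-j}}$ for all $j$ modulo $n$, and similarly with $s'$.
\item Combining the two relations gives $a_j=a_{\overline{j+2(s'-s)}}$, so the period $c$ divides $2(s'-s)$.
\item Since $c$ is odd, $c\mid s'-s$, so $s\equiv s'\pmod c$ and the two tuples are equal, a contradiction.
\end{itemize}
Alternatively, the two relations show that $2s-2\equiv -1\equiv 2s'-2 \pmod c$, which again forces $s\equiv s'$.

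The main care needed is in the extension step of the uniqueness argument. We must check that the relation for the $(n-1)$-tuple, which a priori covers only $n-1$ indices, really holds on the full cycle modulo $n$. The omitted index $j=\overline{s-1}$ maps to itself under $j\mapsto 2s-2-j$, and there the relation reads $a_{s-1}=-a_{s-1}$, which holds because $a_{s-1}$ is $0$ or $k/2$.
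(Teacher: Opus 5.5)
Your proof is correct and follows the paper's route: existence comes from the unique negasymmetric $n$-tuple via the shift $s=(c+1)/2$, and uniqueness comes from composing the two reflection relations into a translation by $2(s'-s)$ and using that $c$ is odd. The only difference is how the excluded index is handled: you extend the reflection relation to the omitted position via the pseudoweight observation (the device used in the proof of Lemma~\ref{lemma:nega_circuit_no_nega_tuples}(iii)), whereas the paper avoids the two excluded indices by working on a window of length $m$, using $t<m\le n/2$.
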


\begin{proof}
Since $[a_0,a_1,\dots,a_{n-1}]$ has odd period, $m$ say, then $[a_0,a_1,\dots,a_{m-1}]$ is a
negasymmetric necklace in $\mathcal{C}_k(m-1)$ which must contain a (unique) negasymmetric
$m$-tuple by Lemma~\ref{lemma:nega_ciruit_means_nega_tuple_odd_k_odd_c}. Suppose this negasymmetric
$m$-tuple is $(d_0,d_1,\dots,d_{m-1})$.  If $(b_0,b_1,\dots,b_{n-1})$ consists of $n/m$
concatenated copies of $(d_0,d_1,\dots,d_{m-1})$, then clearly $(b_0,b_1,\dots,b_{n-1})$ is a
negasymmetric $n$-tuple, and hence $b_i=k-b_{m-1-1}$ for every $i$, $0\leq i<n$. It follows
immediately that $(b_{(m+1)/2},b_{(m+3)/2},\dots,b_{n-1},b_0,b_1,\dots,b_{(m-3)/2})$ is a
negasymmetric $(n-1)$-tuple. Hence we need only show that this is unique.

Suppose without loss of generality that $(a_0,a_1,\dots,a_{n-2})$ and
$(a_{\overline{t}},a_{\overline{t+1}},\dots,a_{\overline{t-2}})$ are both negasymmetric
$(n-1)$-tuples, for some $t$ ($1\leq t<m$), where the bar denotes reduction modulo $n$.  Then
\[ a_{\overline{i}}=-a_{\overline{-2-i}}, \text{~~and~~} a_{\overline{t+i}}=-a_{\overline{t-2-i}}
\]
for every $i$, $0\leq i<n-1$.  Setting $j=t+i$ in the second equation gives
$a_{\overline{j}}=-a_{\overline{2t-2-j}}$ for every $j$, $0\leq j<n~(j\neq t-1)$. Combining with
the first equation this yields $a_{\overline{-2-i}}=-a_{\overline{2t-2-i}}$ for every $i$, $0\leq
i<n~(i\neq n-1 \text{~and~} i\neq t-1)$, and setting $j=-2-i$ means that
\[ a_{\overline{j}}=-a_{\overline{2t+j}} \]
for every $j$, $0\leq j<n-1~(j\neq n-1 \text{~and~} j\neq n-1-t)$.


But, observing that $t<m$ and $m\leq n/2$, this means that
\[ a_{\overline{j}}=-a_{\overline{2t+j}} \]
for every $j$, $0\leq j<m$.  Hence $m\mid 2t$, i.e.\ $m\mid t$ since $m$ is odd.  So
\[ (a_0,a_1,\dots,a_{n-2})=(a_{\overline{t}},a_{\overline{t+1}},\dots,a_{\overline{t-2}}) \]
and the desired result follows.
\end{proof}

We also have a partial converse to Lemma~\ref{lemma:nega_circuit_no_nega_tuples}, as follows.

\begin{lemma}  \label{lemma:converse_to_nega_circuit_no_nega_tuples}
Suppose $k\geq 3$ and $n\geq 4$. Let $n=2^tc$, where $t>0$ and $c$ is odd. If
$(a_{0},a_{1},\dots,a_{n-2})$ is a negasymmetric $(n-1)$-tuple, then both
$[a_0,a_1,\dots,a_{n-2},0]$ and $[a_0,a_1,\dots,a_{n-2},k/2]$ (if $k$ is even) are negasymmetric
necklaces in $\mathcal{C}_k(n-1)$. Moreover, if $[a_0,a_1,\dots,a_{n-2},x]$ contains a
negasymmetric $n$-tuple, where $x$ is either $0$ or $k/2$, then:
\begin{itemize}
\item[(i)] $[a_0,a_1,\dots,a_{n-2},x]$ has odd period dividing $c$;
\item[(ii)] $[a_0,a_1,\dots,a_{n-2},x]$ contains precisely one negasymmetric $n$-tuple;
\item[(iii)] $a_{c-1}=x$, $(a_0,a_1,\dots,a_{n-1})$ equals the concatenation of $2^t$ copies of
    $(a_0,a_1,\dots,a_{c-1})$, and $[a_0,a_1,\dots,a_{c-1}]$ is a negasymmetric necklace
    containing a single negasymmetric $c$-tuple.
\end{itemize}
\end{lemma}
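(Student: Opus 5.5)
First I will check that $[a_0,\dots,a_{n-2},x]$ is a negasymmetric necklace in $\mathcal{C}_k(n-1)$. Since $(a_0,\dots,a_{n-2})$ is negasymmetric, it has pseudoweight $(n-1)k/2$, because negation sends pseudoweight $s$ to $(n-1)k-s$. The element $x\in\{0,k/2\}$ has pseudoweight $k/2$, so the $n$-tuple $(a_0,\dots,a_{n-2},x)$ has pseudoweight $kn/2$ and hence lies in $H_k(n-1)$. For negasymmetry of the necklace, write $a_{n-1}=x$ and take indices mod $n$. Then $a_i=-a_{n-2-i}$ for $0\le i\le n-2$, and $a_{n-1}=-a_{n-1}$ because $x=-x$. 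So $a_i=-a_{\overline{t-i}}$ for all $i$ with $t=n-2$, which is exactly the definition of a negasymmetric necklace.

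Now assume the necklace contains a negasymmetric $n$-tuple. Let $p$ be its period, so $p\mid n$. I will show $p$ is odd by contradiction. Suppose $p$ is even. Then Lemma~\ref{lemma:nega_ciruit_means_02_nega_tuples_odd_k_even_c} gives exactly two negasymmetric $n$-tuples, at distance $p/2$ apart. The necklace also contains the negasymmetric $(n-1)$-tuple $(a_0,\dots,a_{n-2})$, so it has ``centres of negasymmetry'' of both parities.

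To make this precise, I will use the reflection description $a_i=-a_{\overline{t-i}}$. A negasymmetric $n$-tuple starting at position $s$ gives a reflection with $t\equiv 2s-1$, which is odd. The $(n-1)$-tuple gives $t=n-2$, which is even. Composing two such reflections gives $a_i=a_{\overline{i+d}}$ with $d$ the odd difference of the two $t$ values. Hence $p\mid d$ with $d$ odd (reading $d$ mod $n$, and since $n$ is even an odd residue stays odd). This contradicts $p$ being even. So $p$ is odd, and since $p\mid n=2^tc$ we get $p\mid c$, which is (i).

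Part (ii) then follows from Lemma~\ref{lemma:nega_ciruit_means_nega_tuple_odd_k_odd_c}, since the period is odd. For (iii), $p\mid c$ means $(a_0,\dots,a_{n-1})$ is $2^t$ copies of $(a_0,\dots,a_{c-1})$, so $a_{c-1}=a_{n-1}=x$. Lemma~\ref{lemma:nega_for_c}(ii), applied with $c$ in place of its $c$, shows $[a_0,\dots,a_{c-1}]$ is a negasymmetric necklace. It has odd period dividing $c$, so Lemma~\ref{lemma:nega_ciruit_means_nega_tuple_odd_k_odd_c} (or its proof, if $c<3$) shows it contains exactly one negasymmetric $c$-tuple. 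When $c=1$ the claims are trivial: the necklace is constant equal to $x$.

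The main obstacle is the parity step. One has to be careful that the odd difference $d$, viewed modulo $n$ with $n$ even, really forces $p$ odd. This works because $p\mid n$ and $p\mid d$ together give $p\mid\gcd(d,n)$, and that gcd is odd since $d$ is odd.
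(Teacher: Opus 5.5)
Your proof is correct and follows essentially the same route as the paper: you compose the reflection $a_i=-a_{\overline{n-2-i}}$ from the negasymmetric $(n-1)$-tuple with the odd-centred reflection from a negasymmetric $n$-tuple to get an odd shift, so the period divides an odd number and hence $c$, and then (ii) and (iii) follow from the earlier lemmas exactly as in the paper. The opening appeal to the two-negasymmetric-tuples lemma in your contradiction framing is never used and can be dropped, while your explicit pseudoweight check that the necklace lies in $\mathcal{C}_k(n-1)$ supplies a step the paper leaves implicit.
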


\begin{proof}
Since $(a_{0},a_{1},\dots,a_{n-2})$ is negasymmetric we have
\[ a_i=-a_{n-2-i} \]
for every $i$ ($0\leq i\leq n-2$).  It then follows immediately that, if $x=0$ or $k/2$,
\[ (a_{0},a_{1},\dots,a_{n-2},x)=(-a_{n-2},-a_{n-3}, \dots, -a_0, -x) \]
and so $[a_0,a_1,\dots,a_{n-2},x]$ is a negasymmetric necklace, as required.

Now suppose $[a_0,a_1,\dots,a_{n-2},x]$ contains a negasymmetric $n$-tuple where $x=0$ or $k/2$;
for notational convenience let $a_{n-1}=x$. That is, suppose
\[ (a_{\overline{t}},a_{\overline{t+1}},\dots,a_{\overline{t-1}}) =
 -(a_{\overline{t-1}},a_{\overline{t-2}},\dots,a_{\overline{t}}) \]
for some $t$, where $\overline{y}=y\bmod n$.  That is
\[ a_{\overline{t+i}} = -a_{\overline{t-1-i}} \]
for every $i$ ($0\leq i<n$).  Thus, setting $j=t+i$, we have:
\[ a_{j}=-a_{\overline{2t-1-j}} \]
for every $j$ ($0\leq j<n$).

But we know that $a_i=-a_{n-2-i}$ for every $i$, and hence
\[ a_{\overline{-2-j}} = a_{\overline{2t-1-j}} \]
for all $j$ ($0\leq j<n$) and, setting $i=-2-j$ we get
\[ a_{\overline{i}}=a_{\overline{2t+1+i}} \]
for every $i$ ($0\leq i<n$).  Hence $[a_0,a_{1},..,a_{n-1}]$ has period dividing $2t+1$, i.e.\ it
has odd period, necessarily dividing $c$, and (i) follows.  Result (ii) follows immediately from
Lemma~\ref{lemma:nega_ciruit_means_nega_tuple_odd_k_odd_c} given that $[a_0,a_{1},..,a_{n-1}]$ has
odd period.

For (iii), the fact that $a_{c-1}=x$ and $(a_0,a_1,\dots,a_{n-1})$ equals the concatenation of
$2^t$ copies of $(a_0,a_1,\dots,a_{c-1})$ follows immediately from (i), i.e.\ from the fact that
$[a_0,a_1,\dots,a_{n-2},a_{n-1}]$ has odd period dividing $c$.  Finally,
Lemma~\ref{lemma:nega_for_c} means that $[a_0,a_1,\dots,a_{c-1}]$ is a negasymmetric necklace
containing a single negasymmetric $c$-tuple.
\end{proof}

\begin{example}
If $k=3$, then $(12012)$ is a negasymmetric 5-tuple.  From the above $[120120]$ is a negasymmetric
necklace. It has period 3 and contains the negasymmetric 6-tuple $(201201)$.
\end{example}

Using the results above we can now give the cardinalities of $N_i(n,k)$ for every $i$.

\begin{lemma}\label{lemma:secondformulae N0N1N2_rev}
Suppose $k\geq3$ and $n\geq4$.  Let $n=2^tm$, where $t>0$ and $m$ is odd.  Let $\delta=1$ or $2$
depending whether $k$ is odd or even. Then:
\begin{itemize}
\item[(i)]
 \[ |N_0(n,k)| =
  \frac{\delta}{2} (\delta k^{(n-2)/2}-k^{(m-1)/2}); \]
\item[(ii)] \[ |N_1(n,k)| = \delta k^{(m-1)/2},\] and every necklace in $N_1(n,k)$ has  period
    dividing $m$;
\item[(iii)] \[ |N_2(n,k)| = \frac{k^{n/2}-\delta k^{(m-1)/2}}{2}. \]
\end{itemize}
\end{lemma}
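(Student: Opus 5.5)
The plan is to count negasymmetric objects in two ways: first the negasymmetric $n$-tuples, then the negasymmetric $(n-1)$-tuples, using the structural lemmas of the $n$ even case to say how many of each lie in a necklace of each type. Since $n$ is even, Lemma~\ref{lemma:number_of_nega_tuples} gives $k^{n/2}$ negasymmetric $n$-tuples. Since $n-1\geq 3$ is odd, it gives $\delta k^{(n-2)/2}$ negasymmetric $(n-1)$-tuples. Every negasymmetric $n$-tuple has pseudoweight $kn/2$, so it lies in exactly one necklace of $\mathcal{C}_k(n-1)$. Each such necklace is negasymmetric, and by Remark~\ref{remark:Nink at most 2} it lies in $N_1(n,k)$ or $N_2(n,k)$.

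For (ii), a necklace in $N_1(n,k)$ cannot have even period: by Lemma~\ref{lemma:nega_ciruit_means_02_nega_tuples_odd_k_even_c} it would then contain exactly two negasymmetric $n$-tuples. So its period is odd and divides $n$, hence divides $m$. Conversely, by Lemma~\ref{lemma:nega_ciruit_means_nega_tuple_odd_k_odd_c}, every negasymmetric necklace of odd period lies in $N_1(n,k)$. Hence $|N_1(n,k)|$ equals the number of negasymmetric $n$-tuples whose necklace has period dividing $m$. Such tuples are $2^t$ concatenated copies of an $m$-tuple, and by Lemma~\ref{lemma:nega_for_c}(i) they correspond bijectively to negasymmetric $m$-tuples. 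There are $\delta k^{(m-1)/2}$ of these: use Lemma~\ref{lemma:number_of_nega_tuples} for $m\geq3$, and check $m=1$ directly, where the only tuples are $(0)$ and, for $k$ even, $(k/2)$.

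For (iii), the remaining $k^{n/2}-\delta k^{(m-1)/2}$ negasymmetric $n$-tuples lie in necklaces of even period. Each such necklace contains exactly two of them by Lemma~\ref{lemma:nega_ciruit_means_02_nega_tuples_odd_k_even_c}, so dividing by two gives $|N_2(n,k)|$.

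For (i), the main step, I count the pairs $(\mathbf{u},x)$ with $\mathbf{u}$ a negasymmetric $(n-1)$-tuple and $x\in\{0,k/2\}$ (only $x=0$ when $k$ is odd). There are $\delta^2k^{(n-2)/2}$ such pairs. Each pair gives the necklace $[\mathbf{u},x]$, which has pseudoweight $k(n-1)/2+k/2$ and is negasymmetric by Lemma~\ref{lemma:converse_to_nega_circuit_no_nega_tuples}. Conversely, an occurrence of a negasymmetric $(n-1)$-tuple in a necklace determines the following symbol, so the number of pairs mapping to a given necklace is the number of negasymmetric $(n-1)$-tuples it contains. These numbers are as follows:
\begin{itemize}
\item $N_2(n,k)$ necklaces receive none, since by Lemma~\ref{lemma:converse_to_nega_circuit_no_nega_tuples}(i) any $[\mathbf{u},x]$ containing a negasymmetric $n$-tuple has odd period.
\item $N_1(n,k)$ necklaces have odd period and receive exactly one, by Lemma~\ref{lemma:exactly_one_nega_n-1-tuple_when_odd_period}.
\item $N_0(n,k)$ necklaces receive exactly two, since by Lemma~\ref{lemma:nega_circuit_no_nega_tuples}(iv) they contain exactly two distinct negasymmetric $(n-1)$-tuples.
\end{itemize}
Hence $\delta^2k^{(n-2)/2}=2|N_0(n,k)|+|N_1(n,k)|$, and substituting (ii) gives (i).

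The delicate point is making this correspondence between pairs and necklaces genuinely bijective on the $N_0$ and $N_1$ sides. One must ensure that a necklace of small period does not contain the same negasymmetric $(n-1)$-tuple at several positions, which would inflate the count. This is why the `distinct' and `no other' clauses of Lemmas~\ref{lemma:nega_circuit_no_nega_tuples} and \ref{lemma:exactly_one_nega_n-1-tuple_when_odd_period} are needed. The plan is to count occurrences within one period $c$, where those lemmas apply, and to check that each distinct tuple yields exactly one pair.
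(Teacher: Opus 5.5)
Your proposal is correct and is essentially the paper's proof: (ii) and (iii) come from sorting the $k^{n/2}$ negasymmetric $n$-tuples by the parity of their necklace's period, and (i) from double counting incidences between the $\delta k^{(n-2)/2}$ negasymmetric $(n-1)$-tuples and the necklaces through them, giving $\delta^2k^{(n-2)/2}=2|N_0(n,k)|+|N_1(n,k)|$. Your version is slightly more rigorous than the paper's in three places: you prove (ii) before using it in (i), you treat $m=1$ separately (Lemma~\ref{lemma:number_of_nega_tuples} does not formally apply there), and you make the bijection with negasymmetric $m$-tuples explicit via Lemma~\ref{lemma:nega_for_c}(i); for your `delicate point', it suffices to note that $[\mathbf{u},0]\neq[\mathbf{u},k/2]$, since the two contain different numbers of zeros.
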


\begin{proof}
\begin{itemize}
\item[(i)]

It follows from Lemma~\ref{lemma:converse_to_nega_circuit_no_nega_tuples} that every
negasymmetric $(n-1)$-tuple $(a_0,a_1,\dots,a_{n-2})$ is contained in $\delta$ distinct
negasymmetric necklaces $[a_0,a_1,\dots,a_{n-1}]$ in $\mathcal{C}_k(n-1)$, each of which
contains either zero or one negasymmetric $n$-tuple(s) and has even or odd period,
respectively. There are $\delta k^{(n-2)/2}$ such $(n-1)$-tuples, from
Lemma~\ref{lemma:number_of_nega_tuples}

From Lemma~\ref{lemma:nega_circuit_no_nega_tuples}, a negasymmetric necklace
$[a_0,a_1,\dots,a_{n-1}]$ in $\mathcal{C}_k(n-1)$ of even period containing no negasymmetric
$n$-tuples, of which there are $|N_0(n,k)|$, contains precisely 2 distinct negasymmetric
$(n-1)$-tuples.

From Lemma~\ref{lemma:exactly_one_nega_n-1-tuple_when_odd_period}, a negasymmetric necklace
$[a_0,a_1,\dots,a_{n-1}]$ in $\mathcal{C}_k(n-1)$ of odd period containing one negasymmetric
$n$-tuple, of which there are $|N_1(n,k)|$, contains a unique negasymmetric $(n-1)$-tuple.

If we count (negasymmetric $(n-1)$-tuple, necklace) pairs in two ways, where the $(n-1)$-tuple
is in the necklace, we get
\[ \delta^2 k^{(n-2)/2} = 2|N_0(n,k)| + |N_1(n,k)|. \]
It follows immediately (observing that $|N_1(n,k)|=\delta k^{(m-1)/2}$ from part (ii)) that
\[ |N_0(n,k)| = \frac{\delta}{2}(\delta k^{(n-2)/2}-k^{(m-1)/2}).  \]

\item[(ii)] From Lemma~\ref{lemma:nega_ciruit_means_nega_tuple_odd_k_odd_c}, a negasymmetric
    necklace contains a single negasymmetric $n$-tuple if and only if it has odd period.  That
    is, a negasymmetric necklace in $N_1(n,k)$ must have period dividing $m$. By
    Lemma~\ref{lemma:number_of_nega_tuples}, the number of negasymmetric $m$-tuples for odd $m$
    is $\delta k^{(m-1)/2}$, where $\delta$ is as in the statement of the theorem, and so
    $|N_1(n,k)| = \delta k^{(m-1)/2}$.
\item[(iii)] By Lemma~\ref{lemma:number_of_nega_tuples}, there are $k^{n/2}$ negasymmetric
    $n$-tuples, since $n$ is even.  Of these, $N_1(n,k)$ appear in a negasymmetric necklace of
    odd period, one $n$-tuple per necklace. The remaining $k^{n/2}-N_1(n,k)$ negasymmetric
    $n$-tuples appear in the $|N_2(n,k)|$ negasymmetric necklaces containing two negasymmetric
    $n$-tuples each.  Hence $2|N_2(n,k)|+|N_1(n,k)|=k^{n/2}$, and the result follows.
\end{itemize}
\end{proof}

Using Lemmas~\ref{lemma:N1_for_n_odd} and \ref{lemma:secondformulae N0N1N2_rev},
the values of $|N_i(n,k)|$ are tabulated for small values of $n$ and
$k$ in Table~\ref{table:N_i(n,k)}, where the values are given as a triple for $i=0,1,2$.

\begin{table}[htb]
\centering \caption{Values of $|N_i(n,k)|$ for small $n$ and $k$} \label{table:N_i(n,k)}
\begin{tabular}{crrrr} \hline
$n$& $k=3$  & $k=4$    & $k=5$    & $k=6$     \\ \hline
3  & (0,3,0)   & (0,8,0)     & (0,5,0)     & (0,12,0)      \\ \hline
4  & (1,1,4)   & (7,2,7)     & (2,1,12)     & (11,2,17)      \\ \hline
5  & (0,9,0)   & (0,32,0)     & (0,25,0)     & (0,72,0)      \\ \hline
6  & (3,3,12)   & (28,8,28)     & (10,5,60)     & (66,12,102)      \\ \hline
7  & (0,27,0)   & (0,128,0)     & (0,125,0)     & (0,432,0)      \\ \hline
8  & (13,1,40)   & (127,2,127)     & (62,1,312)     & (431,2,647)      \\ \hline
\end{tabular}
\end{table}

\subsection{The main result}

We can now combine Lemmas~\ref{lemma:N1_for_n_odd} and \ref{lemma:secondformulae N0N1N2_rev} to
give an upper bound on the size of $X_k(n-1)$.

\begin{theorem}  \label{theorem:X_bound}
Suppose $k\geq3$ and $n\geq 3$. Let $n=2^tm$ where $t>0$ and $m$ is odd. Let $\delta=1$ if $k$ is
odd and $\delta=2$ if $k$ is even. Suppose $X_k(n-1)$ is as defined in
Theorem~\ref{theorem:Xkn-1_is_Eulerian}. Then:
\[ |X_k(n-1)|\geq \max\left(\frac{k^n-r_{k,n,kn/2}}{2},~s_k(n-1)\right) \]
where
 \[ s_k(n-1) = \frac{k^n - n(|N_0(n,k)|+|N_2(n,k)|)-m(|N_1(n,k)|-\delta)-\delta}{2}
 \]
or, equivalently,
\[ s_k(n-1) = \left\{
\begin{aligned}
\frac{k^n-\delta(n(k^{(n-1)/2}-1)-1)}{2} \text{~~~~if $n$ is odd~} \\
\frac{k^n - \frac{nk^{(n-2)/2}}{2}(\delta^2+k) +n\delta k^{(m-1)/2} - m\delta(k^{(m-1)/2}-1)
-\delta}{2} \\ \text{~~~~~~~~~~~~~~~~~~if $n$ is even~}
\end{aligned} \right..  \]
\end{theorem}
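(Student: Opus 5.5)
The plan is to compute $|X_k(n-1)|$ directly from its definition in Theorem~\ref{theorem:Xkn-1_is_Eulerian}. By construction $X_k(n-1)\supseteq E_k(n-1)$, so $|X_k(n-1)|\geq |E_k(n-1)|=\frac{k^n-r_{k,n,kn/2}}{2}$, which gives the first term of the maximum. For the second term I will show $|X_k(n-1)|\geq s_k(n-1)$; the maximum then follows because both quantities are lower bounds.

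\emph{Step 1: reduce to counting edges in non-negasymmetric necklaces.} The added edges are disjoint from $E_k(n-1)$, since they have pseudoweight exactly $kn/2$. Let $W$ be the number of edges of $H_k(n-1)$ lying in non-negasymmetric necklaces. By Lemma~\ref{lemma:circuit_pairs_k_odd} and the remark following it, these necklaces split into reverse-complementary pairs that share no edges. The map $I$ (reverse-negation) is a bijection on $n$-tuples, so it maps the edge set of one necklace of a pair bijectively onto the edge set of the other. Hence the two necklaces in each pair have the same number of edges. Choosing one necklace from each pair therefore contributes exactly $W/2$ edges, and
\[ |X_k(n-1)| = \frac{k^n-r_{k,n,kn/2}}{2}+\frac{W}{2}. \]

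\emph{Step 2: apply the lower bound on $W$.} Lemma~\ref{lemma:bound on w n evenx} gives
\[ W\geq r_{k,n,kn/2}-n(|N_0(n,k)|+|N_2(n,k)|)-m(|N_1(n,k)|-\delta)-\delta. \]
Substituting this into Step 1, the $r_{k,n,kn/2}$ terms cancel, and we obtain exactly the first expression for $s_k(n-1)$. For $n$ odd I read the statement with $m=n$ (i.e.\ $t=0$) and use Corollary~\ref{corollary:edge_bound_nodd}, where $|N_0(n,k)|=|N_2(n,k)|=0$.

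\emph{Step 3: obtain the closed forms.} For $n$ odd, insert $|N_1(n,k)|=\delta k^{(n-1)/2}$ from Lemma~\ref{lemma:N1_for_n_odd}. This gives
\[ s_k(n-1)=\frac{k^n-n\delta(k^{(n-1)/2}-1)-\delta}{2}. \]
I will check the sign of the constant term carefully against the displayed form, since it equals $-\delta(n(k^{(n-1)/2}-1)+1)$.

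For $n$ even, insert the three cardinalities from Lemma~\ref{lemma:secondformulae N0N1N2_rev}. The two $\delta k^{(m-1)/2}/2$ contributions in $|N_0(n,k)|$ and $|N_2(n,k)|$ combine, and using $k^{n/2}=k\cdot k^{(n-2)/2}$ we get
\[ n(|N_0(n,k)|+|N_2(n,k)|)=\frac{nk^{(n-2)/2}}{2}(\delta^2+k)-n\delta k^{(m-1)/2}. \]
The remaining term is $m(|N_1(n,k)|-\delta)=m\delta(k^{(m-1)/2}-1)$, which yields the stated formula.

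\emph{Main obstacle.} The only non-routine point is the equal-size claim in Step 1. The necklaces of a pair must have the same period, and that is guaranteed because $I$ is a bijection preserving cyclic-shift structure. Without this, one could not claim that exactly half of $W$ is added. Everything else is substitution and algebra.
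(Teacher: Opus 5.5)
Your proposal is correct and follows the paper's proof: write $|X_k(n-1)|=|E_k(n-1)|+W/2$, bound $W$ by Lemma~\ref{lemma:bound on w n evenx} (Corollary~\ref{corollary:edge_bound_nodd} with $m=n$ for odd $n$), then substitute the $|N_i(n,k)|$ values; your use of the bijection $I$ to show that paired necklaces have the same size makes explicit a step the paper simply asserts ``by definition''. Your sign check is warranted. For odd $n$ the correct closed form is $\frac{k^n-\delta(n(k^{(n-1)/2}-1)+1)}{2}$, not the displayed version with $-1$, and Table~\ref{table:X_bound} agrees with your expression (for example, $n=k=3$ gives $10$, whereas the displayed formula would give $11$).
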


\begin{proof}
By definition, $|X_k(n-1)|=|E_k(n-1)|+w/2$, where $w$ is the total number of edges in the necklaces
in $X_k(n-1)$ taken from $\mathcal{C}_k(n-1)$.  By Lemma~\ref{lemma:bound on w n evenx}, $w$ is at
least
\[ r_{k,n,kn/2} - n(|N_0(n,k)|+|N_2(n,k)|)-m(|N_1(n,k)|-\delta)-\delta. \]
As noted in Section~\ref{section:introduction}, $|E_k(n-1)|=(k^n-r_{k,n,kn/2})/2$, and the first
inequality follows.

The second inequality for the $n$ odd case follows immediately from Lemma~\ref{lemma:N1_for_n_odd},
observing that in this case $m=n$ and $|N_0(n,k)|=|N_2(n,k)|=0$. If $n$ is even, from
Lemma~\ref{lemma:secondformulae N0N1N2_rev},
\begin{equation*}
\begin{split}
|N_0(n,k)|+|N_2(n,k)| & = \frac{\delta}{2} (\delta k^{(n-2)/2}-k^{(m-1)/2})
+\frac{k^{n/2}-\delta k^{(m-1)/2}}{2} \\
& = \frac{k^{(n-2)/2}}{2}(\delta^2+k)-\delta k^{(m-1)/2},
\end{split}
\end{equation*}
and
\[ |N_1(n,k)|= \delta k^{(m-1)/2}, \]
and the result now follows.
\end{proof}

The bounds of Theorem~\ref{theorem:X_bound} are tabulated for small $k$ and $n$ in
Table~\ref{table:X_bound}. More specifically, the values of $s_k(n-1)$ are given in the table, with
the corresponding value of $\frac{k^n-r_{k,n,kn/2}}{2}$ given in brackets.  That is, the actual
lower bound is the larger of the two values.

\begin{table}[htb]
\centering \caption{Lower bounds on the size of $X_k(n-1)$} \label{table:X_bound}
\begin{tiny}
\begin{tabular}{crrrrrrrr} \hline
$n$& $k=3$& $k=4$  & $k=5$   & $k=6$    & $k=7$    & $k=8$    & $k=9$     & $k=10$    \\ \hline
3  & 10   & 22     & 56      & 92       & 162      & 234      & 352       & 472       \\
   & (10) & (22)   & (56)    & (89)     & (162)    & (225)    & (352)     & (454)     \\ \hline
4  & 30   & 99     & 284     & 591      & 1146     & 1955     & 3192      & 4863      \\
   & (31) & (93)   & (278)   & (550)    & (1109)   & (1835)   & (3084)    & (4604)    \\ \hline
5  & 101  & 436    & 1502    & 3712     & 8283     & 16068    & 29324     & 49504     \\
   & (96) & (386)  & (1432)  & (3362)   & (8008)   & (14858)  & (28624)   & (46449)   \\ \hline
6  & 316  & 1870   & 7596    & 22808    & 58248    & 129946   & 264520    & 497932    \\
   & (294)& (1586) & (7162)  & (20441)  & (55518)  & (119895) & (254004)  & (467468)  \\ \hline
7  & 1002 & 7750   & 38628   & 138462   & 410574   & 1044998  & 2388936   & 4993006   \\
   & (897)& (6476) & (36220) & (123895) & (393991) & (965569) & (2321848) & (4697914) \\ \hline
8  & 3068 & 31751  & 193816  & 835495   & 2876916  & 8376327  & 21508784  & 49972007  \\
   &(2727)& (26333)& (181550)& (749422) & (2748581)& (7766075)& (20750748)& (47167644)\\ \hline
9  & 9481 & 128776 & 973754  & 5027192  & 20166003 & 67072008 & 193680724 & 499910008 \\
   &(8272)&(106762)& (912944)& (4526720)&(19373760)&(62405190)&(188369056)&(473247274)\\ \hline
\end{tabular}
\end{tiny}
\end{table}

Observe that the bound of Theorem~\ref{theorem:X_bound} is only a lower bound on the size of
$|X_k(n-1)|$, and the bound is only tight when $n$ is prime. This is exemplified by the analyses in
Examples~\ref{example:n6k3} and \ref{example:n4k4} that yield sets of $n$-tuples slightly larger
than the value given in Table~\ref{table:OS_Xkn-1_periods}.

\section{Constructing negative orientable and orientable sequences}  \label{section:NOSs} \label{section:OSs}

\subsection{Negative orientable sequences}

Theorem~\ref{theorem:Xkn-1_is_Eulerian} states that $X_k(n-1)$ is an Eulerian subgraph of
$B_k(n-1)$ and is antinegasymmetric.  Following the same argument as employed in \cite[Section
3.2]{Mitchell25a}, this immediately means that a directed circuit in $X_k(n-1)$, which must exist
because it is Eulerian, will yield a $\mathcal{NOS}_k(n)$ of period $|X_k(n-1)|$.  Hence it is
possible to construct a $\mathcal{NOS}_k(n)$ with period at least that given by
Theorem~\ref{theorem:X_bound}
--- see also Table~\ref{table:X_bound}. Previously, the sequences with the largest known periods
were derived from directed circuits in $E_k({n-1})\subseteq X_k(n-1)$, i.e.\ they had periods as
given by the bracketed values in Table~\ref{table:X_bound}.

In fact, as we claimed in the abstract, these sequences are asymptotically best possible with
regards to period.  Let $\ell(n,k)$ denote the maximum period for a negative orientable sequence of
order $n$ over an alphabet of size $k$.

We first need the following trivial result.

\begin{lemma}[Corollary 3.2 of \cite{Mitchell25a}]  \label{lemma:simple_NOS_bound}
Suppose $n\geq2$ and $k\geq2$. Then
\[
\ell(n,k) \leq \left\{
\begin{aligned}
\frac{k^n-\delta k^{(n-1)/2}}{2} \text{~~~~if $n$ is odd} \\
\frac{k^n - k^{n/2}}{2}\text{~~~~if $n$ is even}
\end{aligned} \right.
\]
where, as previously, $\delta=1$ if $k$ is odd and $\delta=2$ if $k$ is even.
\end{lemma}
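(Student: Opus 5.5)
The plan is a counting argument on the $n$-tuples a negative orientable sequence can contain. Let $S$ be an $\mathcal{NOS}_k(n)$ of period $\ell$, and let $T$ be the set of $n$-tuples $\mathbf{s}_n(i)$ occurring in one period. Since $S$ is an $n$-window sequence, $|T|=\ell$.

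The key observation comes from Definition~\ref{definition:NOS}: if $\mathbf{a}\in T$ then $-\mathbf{a}^R\notin T$. I will apply this to pairs of indices $i,j$ with $\mathbf{s}_n(i)=\mathbf{a}$ and $\mathbf{s}_n(j)=-\mathbf{a}^R$. Taking $i=j$ shows that no negasymmetric $n$-tuple lies in $T$. Taking $i\neq j$ shows that $T$ contains at most one tuple from each pair $\{\mathbf{a},-\mathbf{a}^R\}$ with $\mathbf{a}\neq-\mathbf{a}^R$.

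The map $\mathbf{a}\mapsto-\mathbf{a}^R$ is an involution on the set of all $k^n$ $n$-tuples. Its fixed points are exactly the negasymmetric tuples; call their number $N$. The remaining $k^n-N$ tuples split into $(k^n-N)/2$ orbits of size two. By the previous paragraph, $T$ contains no fixed point and at most one element of each two-element orbit, so
\[ \ell=|T|\leq \frac{k^n-N}{2}. \]

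It remains to substitute $N$, and here the hypotheses need care, since the statement allows $n,k\geq 2$ while Lemma~\ref{lemma:number_of_nega_tuples} is stated for $n,k\geq3$. I will therefore redo its short argument directly. A negasymmetric tuple is determined freely by its first $\lfloor n/2\rfloor$ entries. If $n$ is odd, the middle entry $x$ must satisfy $2x\equiv 0\pmod k$. This equation has one solution ($x=0$) when $k$ is odd and two solutions ($x=0$ or $k/2$) when $k$ is even. Hence $N=k^{n/2}$ for $n$ even and $N=\delta k^{(n-1)/2}$ for $n$ odd, and this argument works for all $n,k\geq 2$. Substituting gives exactly the two cases of the stated bound.

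No step here is a real obstacle; the only points needing attention are the $i=j$ case, which excludes negasymmetric tuples, and checking that the count of negasymmetric tuples holds for the small values $n,k=2$.
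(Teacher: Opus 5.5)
Your argument is correct: the paper gives no proof of its own, simply citing \cite[Corollary 3.2]{Mitchell25a} and calling the result trivial, and your involution-and-count argument (no negasymmetric $n$-tuples, at most one tuple from each pair $\{\mathbf{a},-\mathbf{a}^R\}$, then the count of negasymmetric tuples) is exactly the reasoning that makes it so. Re-deriving that count directly for all $n,k\geq 2$ is a sensible step, since Lemma~\ref{lemma:number_of_nega_tuples} is only stated for $n,k\geq 3$, and it makes your argument cover the full range of the statement.
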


\begin{remark}
Of course, sharper bounds are known (see, for example, \cite[Theorem 3.5]{Mitchell25a}), but this
simple bound suffices for our purposes here.
\end{remark}

\begin{theorem}
Suppose $n\geq3$ and $k\geq3$. The negative orientable sequences generated using $X_k(n-1)$ have
asymptotically optimal period with respect to both $n$ and $k$.  That is
\[ \frac{s_k(n-1)}{\ell(n,k)}\rightarrow 1 \text{~~as~}n\rightarrow\infty \]
and
\[ \frac{s_k(n-1)}{\ell(n,k)}\rightarrow 1 \text{~~as~}k\rightarrow\infty. \]
\end{theorem}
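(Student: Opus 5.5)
We argue by sandwiching. Since a $\mathcal{NOS}_k(n)$ of period $|X_k(n-1)|\geq s_k(n-1)$ exists (Theorem~\ref{theorem:Xkn-1_is_Eulerian} and the discussion above), we have $s_k(n-1)\leq \ell(n,k)$. From Lemma~\ref{lemma:simple_NOS_bound} we also have $\ell(n,k)\leq k^n/2$. Hence
\[ \frac{s_k(n-1)}{k^n/2}\leq\frac{s_k(n-1)}{\ell(n,k)}\leq 1, \]
and it is enough to show that $2s_k(n-1)/k^n\rightarrow1$ in each limit. Equivalently, we must show that the deficit $D=k^n-2s_k(n-1)$ satisfies $D/k^n\rightarrow 0$.

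For $n$ odd, the explicit formula in Theorem~\ref{theorem:X_bound} gives $D=\delta(n(k^{(n-1)/2}-1)-1)$. Since $\delta\leq 2$, this yields $D\leq 2nk^{(n-1)/2}$, and so
\[ \frac{D}{k^n}\leq\frac{2n}{k^{(n+1)/2}}. \]
For $n$ even, the formula gives
\[ D=\frac{nk^{(n-2)/2}}{2}(\delta^2+k)-n\delta k^{(m-1)/2}+m\delta(k^{(m-1)/2}-1)+\delta. \]
The negative term can be discarded. Then, using $\delta^2+k\leq 4+k\leq 3k$ (valid for $k\geq 3$), together with $m\leq n$ and $m-1\leq n/2$, we bound $D\leq 2nk^{n/2}+2nk^{n/2}+2$. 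This gives
\[ \frac{D}{k^n}\leq \frac{4n}{k^{n/2}}+\frac{2}{k^n}. \]

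In both cases the bound has the form $C n k^{-n/2}$ plus a negligible term. Since $k\geq3$ is fixed in the first limit, $n/k^{n/2}\rightarrow0$ as $n\rightarrow\infty$. For fixed $n\geq3$, the bound tends to $0$ as $k\rightarrow\infty$ because the exponent of $k$ in the denominator is at least $n/2\geq 3/2>0$. This proves both limits.

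The only step needing care is the even-$n$ estimate. There the terms involving $m$ must be kept subordinate to $k^{n/2}$, and this holds because $m\leq n/2$ whenever $n$ is even. The key point is that every error term has size $O(nk^{n/2})$, whereas the leading term is $k^n$.
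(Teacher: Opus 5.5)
Your argument is correct and is essentially the paper's: both use $s_k(n-1)\le|X_k(n-1)|\le\ell(n,k)$ and the explicit formula for $s_k(n-1)$ from Theorem~\ref{theorem:X_bound}, and both show that the deficit is $O(nk^{n/2})$ against a leading term $k^n$. The differences are cosmetic and do not affect the conclusion: you sandwich with the cruder bound $\ell(n,k)\le k^n/2$ rather than the bound $b(n,k)$ of Lemma~\ref{lemma:simple_NOS_bound}, and in the even case you bound the $m$-terms crudely by $2nk^{n/2}$ rather than using $n\geq m$ to show they are nonpositive.
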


\begin{proof}
Denote the bound of Lemma~\ref{lemma:simple_NOS_bound} by $b(n,k)$.  First suppose $n$ is odd.
Then:

\begin{align*}
\ell(n,k)-s_k(n-1) & \leq b(n,k)-s_k(n-1) \\
& = \frac{k^n-\delta k^{(n-1)/2}}{2} - \frac{k^n-\delta(n(k^{(n-1)/2}-1)-1)}{2} \\
& = \delta\frac{n(k^{(n-1)/2}-1)-1)-k^{(n-1)/2}}{2} \\
& = \delta\frac{(n-1)k^{(n-1)/2}-n-1}{2} < \delta\frac{nk^{(n-1)/2}}{2}.
\end{align*}

Now, trivially, $\ell(n,k)\geq s_k(n-1)$, and hence:

\begin{align*}
\frac{\ell(n,k)-s_k(n-1)}{s_k(n-1)}& < \frac{\delta\frac{nk^{(n-1)/2}}{2}}{\frac{k^n-\delta(n(k^{(n-1)/2}-1)-1)}{2}} \\
& < \frac{\delta nk^{(n-1)/2}}{k^n-\delta nk^{(n-1)/2})} \\
& \rightarrow 0 \text{~as either~} n\rightarrow\infty \text{~or~} k\rightarrow\infty,
\end{align*}
and the result follows.

Now suppose $n$ is even. First observe that, since $n\geq m$,
\[ s_k(n-1) > \frac{k^n - \frac{nk^{(n-2)/2}}{2}(\delta^2+k)}{2}. \]
Hence
\[ b(n,k)-s_k(n-1) < \frac{\frac{nk^{(n-2)/2}}{2}(\delta^2+k) - k^{n/2}}{2}, \]
and so
\begin{align*}
\frac{\ell(n,k)-s_k(n-1)}{s_k(n-1)} & <
\frac{\frac{nk^{(n-2)/2}}{2}(\delta^2+k) - k^{n/2}}{k^n - \frac{nk^{(n-2)/2}}{2}(\delta^2+k)} \\
< & \frac{\frac{nk^{(n-2)/2}}{2}(\delta^2+k)}{k^n - \frac{nk^{(n-2)/2}}{2}(\delta^2+k)} \\
& \rightarrow 0 \text{~as either~} n\rightarrow\infty \text{~or~} k\rightarrow\infty,
\end{align*}
as required.
\end{proof}

\subsection{Orientable sequences}

We next show how to construct a large period orientable sequence from $X_k(n-1)$; following the
approach of \cite{Mitchell26}, we first define the Lempel homomorphism $D$, mapping from $B_k(n)$
to $B_{k}(n-1)$. We follow Alhakim and Akinwande \cite{Alhakim11}, who generalised the original
Lempel definition \cite{Lempel70} for $k=2$, to alphabets of arbitrary size.

\begin{definition}[Alhakim and Akinwande, \cite{Alhakim11}] \label{definition:Lempel}
Define the function $D_{\beta}:B_k(n)\rightarrow B_{k}(n-1)$ as follows, where
$\beta\in\mathbb{Z}^*_k$. If $\mathbf{a} = (a_0,a_1,\ldots,a_{n-1})\in \mathbb{Z}_k^n$ then
\[ D_{\beta}(\mathbf{a}) = (\beta(a_1-a_0),\beta(a_2-a_1),\ldots\beta(a_{n-1}-a_{n-2})). \]
\end{definition}

We are particularly interested in the case $\beta=1$, and in what follows we simply write $D$ for
$D_1$.  The following result is key.

\begin{lemma}[Lemma 5.2 of \cite{Mitchell26}]  \label{lemma:Lempel_antisymmetry}
Suppose $n\geq2$ and $k\geq3$.  If $T$ is an antinegasymmetric subgraph of the de Bruijn digraph
$B_{k}(n-1)$ with edge set $E$, then $D^{-1}(E)$, of cardinality $k|E|$, is the set of edges for an
antisymmetric subgraph of $B_{k}(n)$, which, abusing our notation slightly, we refer to as
$D^{-1}(T)$. Moreover, if every vertex of $T$ has in-degree equal to its out-degree, then the same
apples to $D^{-1}(T)$.
\end{lemma}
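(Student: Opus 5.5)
The proof splits into three checks: the cardinality of $D^{-1}(E)$, the antisymmetry of the lifted edge set, and the balance of in- and out-degrees.

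\emph{Cardinality.} For any $\mathbf{b}=(b_0,\dots,b_{n-2})\in\mathbb{Z}_k^{n-1}$, the preimages under $D$ are exactly the $k$ tuples $(a_0,a_0+b_0,a_0+b_0+b_1,\dots)$, one for each choice of $a_0\in\mathbb{Z}_k$. So $D$ is exactly $k$-to-one, and $|D^{-1}(E)|=k|E|$.

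\emph{Antisymmetry.} The key identity is $D(\mathbf{a}^R)=-D(\mathbf{a})^R$. This holds because the differences of the reversed tuple are $a_{n-1-i-1}-a_{n-1-i}=-(a_{n-1-i}-a_{n-2-i})$, read in reverse order. Now suppose $\mathbf{a},\mathbf{a}'\in D^{-1}(E)$ with $\mathbf{a}'=\mathbf{a}^R$. Then $D(\mathbf{a}')=-D(\mathbf{a})^R$, with both $D(\mathbf{a})$ and $D(\mathbf{a}')$ in $E$. This contradicts antinegasymmetry of $T$. Taking $\mathbf{a}=\mathbf{a}'$ also excludes palindromic edges, so $D^{-1}(T)$ is antisymmetric in the sense required for orientable sequences.

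\emph{Degree balance.} This step needs the most care. A vertex $\mathbf{v}=(v_0,\dots,v_{n-1})$ of $B_k(n)$ has outgoing edges $(\mathbf{v},x)$ for $x\in\mathbb{Z}_k$. Under $D$ these map to the edges $(D(\mathbf{v}),x-v_{n-1})$ leaving the vertex $D(\mathbf{v})$ of $B_k(n-1)$. Since $x\mapsto x-v_{n-1}$ is a bijection, the out-degree of $\mathbf{v}$ in $D^{-1}(T)$ equals the out-degree of $D(\mathbf{v})$ in $T$. Symmetrically, incoming edges $(y,\mathbf{v})$ map bijectively to incoming edges $(v_0-y,D(\mathbf{v}))$ of $D(\mathbf{v})$, so the in-degree of $\mathbf{v}$ equals the in-degree of $D(\mathbf{v})$. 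Hence in-degree equals out-degree at every vertex of $D^{-1}(T)$ whenever this holds in $T$.

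The main point to verify is that $D$ acts on edges compatibly with the vertex map, taking head to head and tail to tail. The whole degree argument rests on this compatibility.
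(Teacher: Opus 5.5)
Your proof is correct. The paper does not prove this lemma itself (it imports it from \cite{Mitchell26}), and your argument is the standard one for it: $D$ is exactly $k$-to-one, the identity $D(\mathbf{a}^R)=-D(\mathbf{a})^R$ turns antinegasymmetry of $T$ into antisymmetry of $D^{-1}(T)$, and the formulas $(\mathbf{v},x)\mapsto(D(\mathbf{v}),x-v_{n-1})$ and $(y,\mathbf{v})\mapsto(v_0-y,D(\mathbf{v}))$ give degree preservation (and already settle the head/tail compatibility you flag at the end). The only slip is cosmetic: edges of $T$ are $n$-tuples, so in the cardinality step $\mathbf{b}$ should range over $\mathbb{Z}_k^{n}$ with preimages in $\mathbb{Z}_k^{n+1}$, which changes nothing.
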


Combining Lemma~\ref{lemma:multiple_negasymmetric_tuples} with
Theorem~\ref{theorem:Xkn-1_is_Eulerian} immediately gives the following.

\begin{corollary}  \label{corollary:D-1Xkn-1_is_nearly_Eulerian}
If $n\geq3$ and $k\geq 3$ then $D^{-1}(X_k(n-1))$ is the set of edges of an antisymmetric subgraph
of $B_{k}(n)$ in which every vertex of $T$ has in-degree equal to its out-degree.
\end{corollary}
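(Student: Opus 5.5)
The plan is to derive the corollary directly by applying Lemma~\ref{lemma:Lempel_antisymmetry} to the subgraph of $B_k(n-1)$ with edge set $X_k(n-1)$. The text says the corollary combines Lemma~\ref{lemma:multiple_negasymmetric_tuples} with Theorem~\ref{theorem:Xkn-1_is_Eulerian}. The relevant lemma is clearly Lemma~\ref{lemma:Lempel_antisymmetry}, the Lempel antisymmetry result, so that is the one I will invoke. I will also read the phrase ``every vertex of $T$'' in the statement as ``every vertex of the subgraph''.

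The first step is to check the two hypotheses of Lemma~\ref{lemma:Lempel_antisymmetry} for $T$, the subgraph of $B_k(n-1)$ with edge set $X_k(n-1)$.

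\begin{itemize}
\item \textbf{Antinegasymmetry.} By Theorem~\ref{theorem:Xkn-1_is_Eulerian}(ii), $T$ is antinegasymmetric.
\item \textbf{Balanced degrees.} By Theorem~\ref{theorem:Xkn-1_is_Eulerian}(i), the subgraph with edge set $X_k(n-1)$ and vertices of non-zero in-degree is Eulerian. In particular, every vertex has in-degree equal to out-degree.
\end{itemize}

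For the degree condition I do not need connectivity, only this balance of degrees. The balance also follows directly from the proof of Theorem~\ref{theorem:Xkn-1_is_Eulerian}(i): $E_k(n-1)$ is balanced, and complete necklaces are closed cycles, so adjoining them adds one to both the in-degree and the out-degree of each vertex they pass through, once per pass.

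With both hypotheses in place, Lemma~\ref{lemma:Lempel_antisymmetry} gives the result. The set $D^{-1}(X_k(n-1))$, of size $k|X_k(n-1)|$, is the edge set of an antisymmetric subgraph of $B_k(n)$. Since every vertex of $T$ has in-degree equal to out-degree, the lemma also transfers this property to $D^{-1}(T)$.

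There is no real obstacle here, as the argument is a direct substitution into earlier results. The only point that needs care is to make sure the degree-balance property is applied to all vertices of the subgraph, including any isolated vertices. This is harmless, because a vertex with no edges trivially has equal in-degree and out-degree. I would also note explicitly that the corollary does not claim connectivity of $D^{-1}(X_k(n-1))$. That graph may split into several components, which is presumably handled later by cycle joining.
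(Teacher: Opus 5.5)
Your proposal is correct and matches the paper's intended argument: you apply Lemma~\ref{lemma:Lempel_antisymmetry} to the subgraph with edge set $X_k(n-1)$, taking antinegasymmetry from Theorem~\ref{theorem:Xkn-1_is_Eulerian}(ii) and degree balance from part (i) (or its proof). You are also right that the paper's citation of Lemma~\ref{lemma:multiple_negasymmetric_tuples} is a slip for Lemma~\ref{lemma:Lempel_antisymmetry}. One small correction to your closing aside: no later cycle joining is needed, since the paper proves directly in Lemma~\ref{lemma:D-1Xkn-1_is_connected} that $D^{-1}(X_k(n-1))$ is connected.
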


We also have the following.

\begin{lemma}  \label{lemma:D-1Xkn-1_is_connected}
If $n\geq3$ and $k\geq 3$ then $D^{-1}(X_k(n-1))$ is the set of edges for a connected subgraph of
$B_k(n)$.
\end{lemma}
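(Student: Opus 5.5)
We need to show $D^{-1}(X_k(n-1))$ is connected in $B_k(n)$. The plan is to use the structure of $D$: if $\mathbf{a}=(a_0,\dots,a_{n-1})$ is an $n$-tuple in $B_k(n-1)$, then $D^{-1}(\mathbf{a})$ consists of the $k$ edges $(y, y+a_0, y+a_0+a_1,\dots)$ for $y\in\mathbb{Z}_k$, i.e.\ the translates by constants. Since $D$ is a graph homomorphism and $X_k(n-1)$ is connected (Theorem~\ref{theorem:Xkn-1_is_Eulerian}(i)), any walk in $X_k(n-1)$ lifts, from any chosen starting lift, to a walk in $D^{-1}(X_k(n-1))$. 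Hence the preimage has at most $k$ components, one containing each lift of a fixed base vertex. Moreover, adding the constant $c$ to every element maps $D^{-1}(X_k(n-1))$ to itself. It therefore suffices to show that, for one fixed vertex $\mathbf{v}$ of $B_k(n)$ in the preimage, $\mathbf{v}$ and $\mathbf{v}+(1,1,\dots,1)$ lie in the same component, since $1$ generates $\mathbb{Z}_k$.

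The key step is to lift a closed walk of $X_k(n-1)$ whose lift does not close up but shifts by a unit. Start at a vertex $\mathbf{u}$ of $B_k(n-1)$ and traverse a closed walk with edges in $X_k(n-1)$, reading symbols $x_1,\dots,x_L$. The lift starting at a vertex $\mathbf{v}$ with $D(\mathbf{v})=\mathbf{u}$ ends at $\mathbf{v}+\left(\sum x_i\right)(1,\dots,1)$. So I would look for a closed walk in $X_k(n-1)$ whose symbol sum is coprime to $k$ (ideally $\equiv 1$). The natural choice is to work inside $E_k(n-1)$ with low-weight loops, for instance near the all-$1$ vertex $(1,\dots,1)$ (an $(n-1)$-tuple of pseudoweight $n-1<kn/2-1$). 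The loop $(1,\dots,1)$ of length $n$ has pseudoweight $n<kn/2$, so it lies in $E_k(n-1)$. It is a cycle of length $1$ reading the symbol $1$, so its lift shifts by exactly $1$ each time it is traversed.

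So the lifted walk goes $\mathbf{v}\to\mathbf{v}+\mathbf{1}\to\mathbf{v}+2\cdot\mathbf{1}\to\cdots$, all through edges of $D^{-1}(E_k(n-1))$. Concretely, these lifted edges are $(y,y+1,\dots,y+n)$ successively. Thus all $k$ lifts of $(1,\dots,1)$ lie in one component. Combined with the lifting argument, this shows every edge of the preimage is connected to this component: for any preimage edge $\mathbf{e}$, take a walk in $X_k(n-1)$ from $D(\mathbf{e})$ to the vertex $(1,\dots,1)$, which exists because $X_k(n-1)$ is connected, and lift it starting from $\mathbf{e}$. It ends at some lift of $(1,\dots,1)$, and all such lifts are already joined.

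The main point to check carefully is the lifting claim. Given a preimage vertex $\mathbf{w}$ and an edge of $X_k(n-1)$ leaving $D(\mathbf{w})$ with last symbol $x$, the edge of $B_k(n)$ from $\mathbf{w}$ appending $w_{n-1}+x$ maps under $D$ to exactly that edge, and so lies in $D^{-1}(X_k(n-1))$. This is immediate from Definition~\ref{definition:Lempel}, and an analogous statement holds for incoming edges, so undirected connectivity lifts. The remaining detail is that the vertex $(1,\dots,1)$ genuinely belongs to the subgraph with edge set $X_k(n-1)$, which holds since its loop has pseudoweight $n<kn/2$ for $k\ge3$.
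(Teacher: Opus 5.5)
Your argument is correct, but it takes a different route from the paper's.

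The paper imports Theorem 5.5 of \cite{Mitchell26}, which says that $D^{-1}(E_k(n-1))$ is already connected. It then needs only your lifting step: an edge of $D^{-1}(X_k(n-1)-E_k(n-1))$ is joined to $D^{-1}(E_k(n-1))$ by lifting a walk in $X_k(n-1)$ from its image to an edge of $E_k(n-1)$.

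You avoid that external result and prove connectivity of the whole preimage directly. Lifting walks from the connected base graph gives at most $k$ components, one through each translate $\mathbf{v}+c(1,\dots,1)$ of a fixed lift. Invariance of the preimage under adding constants reduces the problem to joining $\mathbf{v}$ and $\mathbf{v}+(1,\dots,1)$. The loop $(1,\dots,1)\in E_k(n-1)$ does this, since its lift $(y,y+1,\dots,y+n)$ shifts by exactly one.

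The paper's proof is shorter, given the citation. Yours is self-contained: it needs only the connectivity of $X_k(n-1)$ from Theorem~\ref{theorem:Xkn-1_is_Eulerian}(i) and Definition~\ref{definition:Lempel}. It is also more general. It shows that $D^{-1}(T)$ is connected for any connected subgraph $T$ of $B_k(n-1)$ containing a closed walk whose symbol sum is a unit modulo $k$. Taking $T=E_k(n-1)$ recovers the cited Theorem 5.5 itself.
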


\begin{proof}
Theorem 5.5 of \cite{Mitchell26} shows that the set of edges in $D^{-1}(E_k(n-1))$ forms a
connected subgraph of $B_k(n)$.  Thus we need only show that any edge in
$D^{-1}(X_k(n-1)-E_k(n-1))$ belongs to the same connected component of $D^{-1}(X_k(n-1))$ as an
edge in $D^{-1}(E_k(n-1))$.  If $\mathbf{a}=(a_0,a_1,\dots,a_n)$ is such an edge, then
$D(\mathbf{a})$ is in $X_k(n-1)-E_k(n-1))$. But by Theorem~\ref{theorem:Xkn-1_is_Eulerian},
$X_k(n-1)$ is Eulerian, and hence there is a linked sequence of directed edges connecting
$D(\mathbf{a})$ to an edge in $E_k(n-1)$, where each directed edge links to the next edge via a
common vertex. This immediately can be used, by applying $D^{-1}$, to construct a linked sequence
of directed edges linking $\mathbf{a}$ to an edge in $D^{-1}(E_k(n-1))$ and the result follows.
\end{proof}

It follows immediately from Corollary~\ref{corollary:D-1Xkn-1_is_nearly_Eulerian} and
Lemma~\ref{lemma:D-1Xkn-1_is_connected} that any directed circuit in $D^{-1}(X_k(n-1))$ will yield
an $\mathcal{OS}_k(n+1)$. We immediately have the following.

\begin{theorem} \label{theorem:new_OS_lower_bounds}
If $n\geq3$ and $k\geq3$ then there exists an $\mathcal{OS}_k(n+1)$ of period $ks_k(n-1)$, where
$s_k(n-1)$ is as specified in Theorem~\ref{theorem:X_bound}.
\end{theorem}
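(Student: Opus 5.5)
The plan is to realise a set $Y$ with $E_k(n-1)\subseteq Y\subseteq X_k(n-1)$ and $|Y|=s_k(n-1)$ exactly, and then to lift $Y$ through the Lempel map $D$.

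First, observe that the proof of Theorem~\ref{theorem:Xkn-1_is_Eulerian} never uses the fact that \emph{every} reverse-complementary pair contributes a necklace. Part (i) only needs that complete necklaces are adjoined, each meeting a vertex of $E_k(n-1)$. Part (ii) only needs that at most one necklace of each pair is present. Hence for any subfamily $\mathcal{F}$ of the chosen non-negasymmetric necklaces, $Y=E_k(n-1)\cup\bigcup\mathcal{F}$ is Eulerian and antinegasymmetric. Corollary~\ref{corollary:D-1Xkn-1_is_nearly_Eulerian} and Lemma~\ref{lemma:D-1Xkn-1_is_connected} then go through verbatim with $Y$ in place of $X_k(n-1)$. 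Indeed, Lemma~\ref{lemma:Lempel_antisymmetry} gives antisymmetry and balanced degrees for $D^{-1}(Y)$ with $|D^{-1}(Y)|=k|Y|$, and connectivity of $D^{-1}(Y)$ follows as before from connectivity of $D^{-1}(E_k(n-1))$ and of $Y$. An Eulerian circuit in $D^{-1}(Y)$ is then an $\mathcal{OS}_k(n+1)$ of period exactly $k|Y|$. So it suffices to choose $\mathcal{F}$ with total edge count
\[ T := s_k(n-1)-|E_k(n-1)| = \tfrac12\bigl(r_{k,n,kn/2}-n(|N_0(n,k)|+|N_2(n,k)|)-m(|N_1(n,k)|-\delta)-\delta\bigr). \]

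Second, compute the gap between $T$ and the full total $w/2$ attained by $X_k(n-1)$. From the proof of Lemma~\ref{lemma:bound on w n evenx}, the deficit $w-2T$ equals $\sum(n-p_C)$ over the negasymmetric necklaces $C\in N_0\cup N_2$ of period $p_C<n$, plus $\sum(m-p_C)$ over those in $N_1$ of period $p_C<m$ other than the $\delta$ constant ones. Thus $d:=w/2-T=\tfrac12\sum(\text{these defects})$. If $n$ is prime then $d=0$, and $Y=X_k(n-1)$ already works. In general I would check integrality of $T$ directly from the closed form in Theorem~\ref{theorem:X_bound}, using parity of $k^n-r_{k,n,kn/2}$ and of the negasymmetric counts. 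Integrality of $T$ gives $d\in\mathbb{Z}_{\ge0}$.

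Third, and this is the main obstacle, I must show that $d$ can be removed exactly. That is, I need a subfamily of the chosen necklaces whose periods sum to $d$, so that dropping them leaves exactly $T$ edges. Since the defects come from periods $p$ that are proper divisors of $n$, $d$ is a sum of terms of the form $(n-p)/2$ or $(m-p)/2$. My first attempt would be a counting comparison at each proper divisor $c\mid n$. By Lemma~\ref{lemma:nega_for_c}, necklaces of period $c$ correspond to necklaces in $\mathcal{C}_k(c-1)$, which allows the non-negasymmetric necklaces of period $c$ to be counted against the negasymmetric ones of period $c$. I would then remove non-negasymmetric necklaces of full period $n$ together with suitable small-period ones, trying to hit $d$ exactly. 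Failing a clean identity, I would fall back on a flexibility argument: there are many chosen necklaces of period $n$ and of every smaller period $c\mid n$ occurring (for instance $c=2$ or $c=m$). Once $d$ exceeds a small threshold, any integer combination $d=an+\sum_c b_c c$ with nonnegative coefficients within the available counts should be realisable, and the small cases $n\le 9$ would be verified from the data behind Table~\ref{table:X_bound}.
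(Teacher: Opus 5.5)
Your first step is correct, and if you take $\mathcal{F}$ to be all the chosen necklaces it is exactly the paper's proof. With $Y=X_k(n-1)$, Lemma~\ref{lemma:Lempel_antisymmetry} and Lemma~\ref{lemma:D-1Xkn-1_is_connected} make $D^{-1}(X_k(n-1))$ balanced, antisymmetric and connected. An Eulerian circuit in it is then an $\mathcal{OS}_k(n+1)$ of period $k|X_k(n-1)|\geq ks_k(n-1)$, by Theorem~\ref{theorem:X_bound}. That lower bound is all the theorem is meant to assert; the paper says immediately afterwards that it only gives a lower bound on the period.

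The gap is in your second and third steps, where you try to hit $ks_k(n-1)$ exactly by discarding whole necklaces. This step is not merely unproved; it fails in small cases, for two reasons. First, $T$ can be negative. For $n=4$, $k=3$, Table~\ref{table:X_bound} gives $s_3(3)=30<31=|E_3(3)|$, so no $Y\supseteq E_3(3)$ has $|Y|=s_3(3)$. Second, even when $T\geq 0$ the required subset sum need not exist. For $n=6$, $k=3$, all non-negasymmetric necklaces have period $6$ (Example~\ref{example:n6k3}), so $|Y|-|E_3(5)|$ is always a multiple of $6$, whereas $T=316-294=22$. Similarly, for $n=k=4$ they all have period $4$ (Example~\ref{example:n4k4}), while $T=99-93=6$.

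Your fallback ``flexibility'' argument cannot repair this, because non-negasymmetric necklaces of small period need not exist. For instance, every period-$2$ necklace in $H_k(n-1)$ is negasymmetric. By Lemma~\ref{lemma:nega_for_c}(ii) it reduces to a necklace $[a_0,a_1]$ of pseudoweight $k$, which forces either $a_1=-a_0$ or $a_0,a_1\in\{0,k/2\}$. Likewise, checking the small cases ``from the data'' would turn up precisely the counterexamples above.

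The remedy is to stop after your first step with $Y=X_k(n-1)$ and conclude the inequality. Producing an orientable sequence of period exactly $ks_k(n-1)$ in every case would need a genuinely different construction, which neither your proposal nor the paper supplies.
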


The lower bounds on the periods of the sequences of Theorem~\ref{theorem:new_OS_lower_bounds} for
small $k$ and $n$ are given in Table~\ref{table:OS_Xkn-1_periods}, along with the periods of the
sequences from \cite[Table 5]{Mitchell26} in brackets for comparison.

\begin{table}[htb]
\centering \caption{Periods of the constructed $\mathcal{OS}_k(n)$ (and bounds)}
\label{table:OS_Xkn-1_periods}

\begin{tabular}{c|rrrrrr} \hline
$n$ & $k=3$   & $k=4$    & $k=5$    & $k=6$    & $k=7$     & $k=8$     \\ \hline
4   & 30      & 88       & 280      & 552      & 1134      & 1872      \\
    & (30)    & (88)     & (280)    & (534)    & (1134)    & (1800)    \\ \hline
5   & 90      & 396      & 1420     & 3546     & 8022      & 15640     \\
    & (93)    & (372)    & (1390)   & (3300)   & (7763)    & (14680)   \\ \hline
6   & 303     & 1744     & 7510     & 22272    & 57981     & 128544    \\
    & (288)   & (1544)   & (7160)   & (20172)  & (56056)   & (118864)  \\ \hline
7   & 948     & 7480     & 37980    & 136848   & 407736    & 1039568   \\
    & (882)   & (6344)   & (35810)  & (122646) & (388626)  &(959160)   \\ \hline
8   & 3006    & 31000    & 193140   & 830772   & 2874018   & 8359984   \\
    & (2691)  & (25904)  & (181100) & (743370) & (2757937) & (7724552) \\ \hline
\end{tabular}

\end{table}

It is important to note that Theorem~\ref{theorem:new_OS_lower_bounds} only gives a lower bound on
the period of the orientable sequences obtained by the approach described in this paper, and the
bound is only tight when $n-1$ is prime.  As previously noted in connection with
Table~\ref{table:X_bound}, this is exemplified by the analyses in Examples~\ref{example:n6k3} and
\ref{example:n4k4} that yield sequences with a slightly larger period than that given in
Table~\ref{table:OS_Xkn-1_periods}.

Finally, as for the negative orientable case, we show that the sequences of
Theorem~\ref{theorem:new_OS_lower_bounds} have asymptotically optimal period.  Let $\lambda(n,k)$
denote the maximum period for an orientable sequence of order $n$ over an alphabet of size $k$. The
following result is elementary.

\begin{lemma}[Lemma 16 of \cite{Burns93}]  \label{lemma:simple_OS_bound}
Suppose $n\geq2$ and $k\geq2$. Then
\[
\lambda(n,k) \leq \left\{
\begin{aligned}
\frac{k^n - k^{(n+1)/2}}{2} \text{~~~~if $n$ is odd} \\
\frac{k^n - k^{n/2}}{2}     \text{~~~~if $n$ is even}
\end{aligned} \right..
\]
\end{lemma}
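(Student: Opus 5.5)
The plan is a direct counting argument on the $n$-tuples that can appear in one period of an $\mathcal{OS}_k(n)$. Suppose $S=(s_i)$ is an $\mathcal{OS}_k(n)$ of period $p$. Because $S$ is an $n$-window sequence, the tuples $\mathbf{s}_n(0),\mathbf{s}_n(1),\dots,\mathbf{s}_n(p-1)$ are $p$ distinct $k$-ary $n$-tuples. So it is enough to bound the size of any set $T$ of $n$-tuples satisfying the orientability condition $\mathbf{a}\neq\mathbf{b}^R$ for all $\mathbf{a},\mathbf{b}\in T$.

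First, I will handle the palindromic $n$-tuples, that is, those with $\mathbf{a}=\mathbf{a}^R$. Taking $i=j$ in the definition of an $\mathcal{OS}_k(n)$ gives $\mathbf{s}_n(i)\neq\mathbf{s}_n(i)^R$, so no palindrome can occur in $S$. A palindrome is determined by its first $\lceil n/2\rceil$ entries, and these entries can be chosen arbitrarily. The argument is the same as in Lemma~\ref{lemma:number_of_nega_tuples}, but simpler, because there is no constraint on the middle entry when $n$ is odd. Hence there are exactly $k^{\lceil n/2\rceil}$ palindromes. This number is $k^{(n+1)/2}$ for $n$ odd and $k^{n/2}$ for $n$ even.

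Second, reversal $\mathbf{a}\mapsto\mathbf{a}^R$ is an involution on $\mathbb{Z}_k^n$. Its fixed points are exactly the palindromes, so the remaining $k^n-k^{\lceil n/2\rceil}$ tuples split into orbits $\{\mathbf{a},\mathbf{a}^R\}$ of size two. The orientability condition with $i\neq j$ says that $S$ contains at most one tuple from each such orbit. Therefore
\[ \lambda(n,k)\leq p\leq \frac{k^n-k^{\lceil n/2\rceil}}{2}, \]
and substituting the two parities of $n$ gives the stated bound.

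None of the steps is a real obstacle. The only point needing care is the palindrome count in the odd case, where the exponent is $(n+1)/2$ rather than $(n-1)/2$. This contrasts with the negasymmetric count of Lemma~\ref{lemma:number_of_nega_tuples}, where the middle symbol was restricted to $0$ or $k/2$; here it is free.
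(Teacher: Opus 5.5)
Your proof is correct and matches the standard argument behind this bound: exclude the $k^{\lceil n/2\rceil}$ palindromes, then take at most one tuple from each reverse pair. The paper gives no proof of its own; it calls the result elementary and cites Lemma 16 of Burns and Mitchell, which is exactly this count, just as Lemma~\ref{lemma:simple_NOS_bound} comes from counting negasymmetric tuples and pairing the rest.
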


\begin{remark}
As with Lemma~\ref{lemma:simple_NOS_bound}, sharper bounds are known (see, for example,
\cite[Theorem 2.13]{Mitchell26}), but this simple bound suffices here.
\end{remark}

\begin{theorem}
Suppose $n\geq3$ and $k\geq3$. The orientable sequences of
Theorem~\ref{theorem:new_OS_lower_bounds} have asymptotically optimal period with respect to both
$n$ and $k$.  That is
\[ \frac{ks_k(n-1)}{\lambda(n+1,k)}\rightarrow 1 \text{~~as~}n\rightarrow\infty \]
and
\[ \frac{ks_k(n-1)}{\lambda(n+1,k)}\rightarrow 1 \text{~~as~}k\rightarrow\infty. \]
\end{theorem}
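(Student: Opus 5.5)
We will argue exactly as for the negative orientable case, comparing the period $ks_k(n-1)$ with the trivial bound of Lemma~\ref{lemma:simple_OS_bound} applied with $n+1$ in place of $n$. The lower bound $\lambda(n+1,k)\geq ks_k(n-1)$ is immediate from Theorem~\ref{theorem:new_OS_lower_bounds}. It therefore suffices to show that
\[ \frac{\lambda(n+1,k)-ks_k(n-1)}{ks_k(n-1)}\rightarrow 0 \]
in each of the two limits. To bound the numerator, we first use Lemma~\ref{lemma:simple_OS_bound} to get $\lambda(n+1,k)\leq k^{n+1}/2$. The subtracted terms $k^{(n+2)/2}$ or $k^{(n+1)/2}$ are non-negative, so we simply drop them; this loses nothing asymptotically.

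Next we bound $ks_k(n-1)$ from below, splitting according to the parity of $n$ using the closed forms in Theorem~\ref{theorem:X_bound}.

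\textbf{Case $n$ odd.} Here
\[ ks_k(n-1)>\frac{k^{n+1}-\delta nk^{(n+1)/2}}{2}, \]
since the extra terms $+\delta n+\delta$ are positive. Hence $\lambda(n+1,k)-ks_k(n-1)<\delta nk^{(n+1)/2}/2$.

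\textbf{Case $n$ even.} Exactly as in the earlier proof, using $n\geq m$ we have
\[ s_k(n-1)>\frac{k^n-\frac{nk^{(n-2)/2}}{2}(\delta^2+k)}{2}. \]
Multiplying by $k$ gives
\[ \lambda(n+1,k)-ks_k(n-1)<\frac{nk^{n/2}(\delta^2+k)}{4}. \]

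In both cases the ratio is then bounded by a quantity of the form
\[ \frac{C(n,k)}{k^{n+1}-C(n,k)}, \]
with $C(n,k)=\delta nk^{(n+1)/2}$ in the odd case and $C(n,k)=nk^{n/2}(\delta^2+k)/2$ in the even case. Since $\delta\leq 2$, we have $\delta^2+k\leq 4+k\leq 3k$ for $k\geq 3$. So in both cases $C(n,k)=O(nk^{(n+2)/2})$, which is $o(k^{n+1})$: as $k\rightarrow\infty$ the ratio $C/k^{n+1}$ is $O(nk^{-n/2})$, and as $n\rightarrow\infty$ with $k\geq3$ fixed it is $O(nk^{-n/2})\rightarrow 0$. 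Hence the quotient tends to $0$ and both limits follow.

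The only real obstacle is the bookkeeping in the even-$n$ closed form. We must check that the discarded terms $n\delta k^{(m-1)/2}-m\delta(k^{(m-1)/2}-1)-\delta$ are non-negative, which holds because $n\geq m$ and $m\delta\geq\delta$. We must also confirm that the dominant correction $nk^{n/2}$ is genuinely of lower order than $k^{n+1}$ uniformly in both limits; the factor $(\delta^2+k)$ is the one place a careless estimate could lose an extra power of $k$.
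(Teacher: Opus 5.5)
Your proof is correct and follows essentially the same route as the paper: compare $ks_k(n-1)$ with the trivial bound of Lemma~\ref{lemma:simple_OS_bound} at order $n+1$, use the two parity-dependent closed forms for $s_k(n-1)$ from Theorem~\ref{theorem:X_bound} (discarding the non-negative $m$-dependent terms when $n$ is even), and bound the relative gap by $C(n,k)/(k^{n+1}-C(n,k))$. The differences are cosmetic: you drop the $k^{(n+1)/2}$ and $k^{(n+2)/2}$ terms at the outset rather than at the last step, and you make the rate $O(nk^{-n/2})$ explicit uniformly in $\delta$ and in the parity of $n$, which the paper leaves implicit.
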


\begin{proof}
Denote the bound of Lemma~\ref{lemma:simple_OS_bound} by $c(n,k)$.  First suppose $n$ is odd. Then:

\begin{align*}
\lambda(n+1,k)-ks_k(n-1) & \leq c(n+1,k)-ks_k(n-1) \\
& = \frac{k^{n+1}-k^{(n+1)/2}}{2} - \frac{k^{n+1}-\delta k(n(k^{(n-1)/2}-1)-1)}{2} \\
& = \frac{\delta k(n(k^{(n-1)/2}-1)-1)-k^{(n+1)/2}}{2} \\
& = \frac{\delta nk^{(n+1)/2}-\delta k(n+1)-k^{(n+1)/2}}{2} < \frac{\delta nk^{(n+1)/2}}{2}.
\end{align*}

Now, trivially, $\lambda(n+1,k)\geq ks_k(n-1)$, and hence:

\begin{align*}
\frac{\lambda(n+1,k)-ks_k(n-1)}{ks_k(n-1)} & < \frac{\delta nk^{(n+1)/2}}{k^{n+1}-\delta k(n(k^{(n-1)/2}-1)-1)} \\
& < \frac{\delta nk^{(n+1)/2}}{k^{n+1}-\delta nk^{(n+1)/2})} \\
& \rightarrow 0 \text{~as either~} n\rightarrow\infty \text{~or~} k\rightarrow\infty,
\end{align*}
and the result follows.

Now suppose $n$ is even. First observe that, since $n\geq m$,
\[ ks_k(n-1) > \frac{k^{n+1} - \frac{nk^{n/2}}{2}(\delta^2+k)}{2}. \]
Hence
\[ c(n+1,k)-ks_k(n-1) < \frac{\frac{nk^{n/2}}{2}(\delta^2+k) - k^{(n+2)/2}}{2}, \]
and so
\begin{align*}
\frac{\lambda(n+1,k)-ks_k(n-1)}{ks_k(n-1)} & <
\frac{\frac{nk^{n/2}}{2}(\delta^2+k) - k^{(n+2)/2}}{k^{n+1} - \frac{nk^{n/2}}{2}(\delta^2+k)}\\
& < \frac{\frac{nk^{n/2}}{2}(\delta^2+k)}{k^{n+1} - \frac{nk^{n/2}}{2}(\delta^2+k)}\\
& \rightarrow 0 \text{~as either~} n\rightarrow\infty \text{~or~} k\rightarrow\infty,
\end{align*}
as required.
\end{proof}

\section{Concluding remarks}  \label{section:conclusions}

It is important to assess how the construction method described here affects efforts to find
orientable sequences with the largest possible period.  The current state of knowledge for small
$n$ and $k>2$ is summarised in Table~\ref{table:OS_period_bounds}, where the upper bound from
\cite[Theorem 2.13]{Mitchell26} is given in brackets beneath the largest known period.

\begin{table}[htb]
\centering \caption{Largest known periods for an $\mathcal{OS}_k(n)$ (and bounds)}
\label{table:OS_period_bounds}

\begin{tabular}{crrrrrr} \hline
$n$ & $k=3$   & $k=4$    & $k=5$    & $k=6$    & $k=7$     & $k=8$     \\ \hline
2   & {\bf 3} & {\bf 4}  & {\bf 10} & {\bf 12} & {\bf 21}  & {\bf 24}  \\
    & (3)     & (4)      & (10)     & (12)     & (21)      & (24)      \\ \hline
3   & {\bf 9} & {\bf 20} & {\bf 50} & {\bf 84} & {\bf 147} & {\bf  216}\\
    & (9)     & (20)     & (50)     & (84)     & (147)     & (216)     \\ \hline
4   & {\bf 30}& 88       & {\bf 280}& 552      & {\bf 1134}&  1872     \\
    & (30)    & (112)    & (280)    & (612)    & (1134)    & (1984)    \\ \hline
5   & 93      & 404      & 1420     & 3546     & 8022      &  15640    \\
    & (99)    & (452)    & (1450)   & (3684)   & (8085)    & (15896)   \\ \hline
6   & 303     & 1744     & 7510     & 22272    & 57981     &  128544   \\
    & (315)   & (1958)   & (7550)   & (23019)  & (58065)   & (130332)  \\ \hline
7   & 954     & 7480     & 37980    & 136848   & 407736    &  1039568  \\
    & (972)   & (7844)   & (38100)  & (138144) & (408072)  & (1042712) \\ \hline
8   & 3006    & 31000    & 193140   & 830772   & 2874018   &  8359984  \\
    & (3096)  & (32390)  & (193800) & (837879) & (2876496) & (8382492) \\ \hline
\end{tabular}
\end{table}

The following observations can be made about this table.  The values in bold represent maximal
values.
\begin{itemize}
\item $n=2$:  The fact that there exists an $\mathcal{OS}_k(2)$ with period meeting the bound
    follows from \cite[Theorem 5.4]{Alhakim24a} (for $k$ prime), and from \cite[Theorem
    2]{Gabric25} and \cite[Lemma 2.2]{Mitchell25a} for general $k$.
\item $n=3$: The existence of an $\mathcal{OS}_k(3)$ meeting the period bound is due to
    \cite[Example 5.1]{Alhakim24a} for $k=3$, \cite[Section 1]{Gabric25} for $k=5$ (from an
    exhaustive search), and for general $k$ from \cite{Mitchell26}.
\item $n=4$: The fact that the maximum period of an $\mathcal{OS}_3(4)$ is 30 is again due to
    \cite[Section 1]{Gabric25}, and the existence of an $\mathcal{OS}_k(4)$ meeting the bound
    for general odd $k$ is from \cite{Mitchell26}. the values for $k=6$ and $k=8$ follow from
    this paper.
\item $n=5$: The $\mathcal{OS}_3(5)$ of period 93 comes from \cite{Mitchell26}. The values for
    $k>3$ come from this paper. The fact that the value for $n=5$ and $k=4$ is 404, and not 396
    as guaranteed by the bound of Theorem~\ref{theorem:new_OS_lower_bounds}, arises from the
    analysis in Example~\ref{example:n4k4}.
\item $n\geq6$: All values in the table come from this paper. Analogously to the case $n=5$ and
    $k=4$, the value of 954, and not 948, for $n=7$ and $k=3$ arises from the analysis in
    Example~\ref{example:n6k3}.
\end{itemize}

That is, the results in this paper appear to yield sequences with larger period than were
previously known for all values of $n\geq5$. In particular, the values in the table
exceed those obtained by Gabri\'{c} and Sawada \cite{Gabric25}, whose construction technique also
yields sequences with asymptotically optimal period. As has been observed above,
except when $n-1$ is prime, the value yielded by the bound of
Theorem~\ref{theorem:new_OS_lower_bounds} is typically less than the value actually obtained.

However, there are many questions that remain to be resolved.  We briefly mention two here.
\begin{itemize}
\item First, and as noted in Remark~\ref{remark:other_partitions}, we have focussed in this
    paper on adding edges from non-negasymmetric necklaces in $\mathcal{C}_k(n-1)$ to
    $E_k(n-1)$. However, if $k$ is even then $\mathcal{C}_k(n-1)$ is by no means the only way
    of dividing the $n$-tuples of pseudoweight $kn/2$ into circuits, and there may be other
    ways of doing this that enable a larger set of $n$-tuples to be added to $E_k(n-1)$ while
    preserving antinegasymmetry and the Eulerian property.
\item Second, whilst the approach described in this paper results in orientable and negative
    orientable sequences with period greater than any previously known sequences, it is far
    from clear whether or not sequences with greater period can be constructed.
\end{itemize}
That is, while progress has been achieved, the search for optimal period orientable sequences is
not over yet!


\begin{thebibliography}{10}

\bibitem{Alhakim11} A.~Alhakim and M.~Akinwande, \emph{A recursive construction of nonbinary de
  {Bruijn} sequences}, Des. Codes Cryptogr. \textbf{60} (2011), no.~2,
  155--169.

\bibitem{Alhakim24a} A.~Alhakim, C.~J. Mitchell, J.~Szmidt, and P.~R. Wild, \emph{Orientable
  sequences over non-binary alphabets}, Cryptogr. Commun. \textbf{16} (2024),
  1309--1326.

\bibitem{Burns92} J.~Burns and C.~J. Mitchell, \emph{Coding schemes for two-dimensional
    position
  sensing}, Tech. Report HPL--92--19, January 1992,
  \url{https://www.chrismitchell.net/HPL-92-19.pdf}.

\bibitem{Burns93} \bysame, \emph{Coding schemes for two-dimensional position sensing},
  Cryptography and Coding III (M.~J. Ganley, ed.), Oxford University Press,
  1993, pp.~31--66.

\bibitem{cameron25} B.~Cameron, A.~G{\"u}ndo{\u g}an, and J.~Sawada, \emph{Cut-down de {Bruijn}
  sequences}, Discrete Math. \textbf{348} (2025), 114024.

\bibitem{Dai93} Z.-D. Dai, K.~M. Martin, M.~J.~B. Robshaw, and P.~R. Wild, \emph{Orientable
  sequences}, Cryptography and Coding III (M.~J. Ganley, ed.), Oxford
  University Press, Oxford, 1993, pp.~97--115.

\bibitem{Gabric24} D.~Gabri{\'c} and J.~Sawada, \emph{Construction of orientable sequences in
  {$O(1)$-amortized} time per bit}, 2024, Available at
  \url{https://arxiv.org/abs/2401.14341}.

\bibitem{Gabric24b} \bysame, \emph{Efficient construction of long orientable sequences}, 35th
  Annual Symposium on Combinatorial Pattern Matching, {CPM} 2024, June 25--27,
  2024, Fukuoka, Japan (S.~Inenaga and S.~J. Puglisi, eds.), LIPIcs, vol. 296,
  Schloss Dagstuhl --- Leibniz-Zentrum f{\"{u}}r Informatik, 2024,
  pp.~15:1--15:12.

\bibitem{Gabric25} \bysame, \emph{Constructing $k$-ary orientable sequences with asymptotically
  optimal length}, Des. Codes Cryptogr. \textbf{93} (2025), 2349--2367.

\bibitem{Lempel70} A.~Lempel, \emph{On a homomorphism of the de {B}ruijn graph and its
    application
  to the design of feedback shift registers}, {IEEE} Trans. Comput.
  \textbf{{\bf C-19}} (1970), 1204--1209.

\bibitem{Mitchell22} C.~J. Mitchell and P.~R. Wild, \emph{Constructing orientable sequences},
    IEEE
  Trans. Inform. Theory \textbf{68} (2022), 4782--4789.

\bibitem{Mitchell25a} \bysame, \emph{Orientable and negative orientable sequences}, Discrete
    Appl.
  Math. \textbf{377} (2025), 242--259.

\bibitem{Mitchell26} \bysame, \emph{New orientable sequences}, Acta Inform. \textbf{63} (2026),
    14.

\bibitem{Ruskey92} F.~Ruskey, C.~D. Savage, and T.~M.~Y. Wang, \emph{Generating necklaces}, J.
  Algorithms \textbf{13} (1992), no.~3, 414--430.

\end{thebibliography}

\providecommand{\bysame}{\leavevmode\hbox to3em{\hrulefill}\thinspace}
\providecommand{\MR}{\relax\ifhmode\unskip\space\fi MR }
\providecommand{\MRhref}[2]{%
  \href{http://www.ams.org/mathscinet-getitem?mr=#1}{#2}
} \providecommand{\href}[2]{#2}

\end{document}